\numberwithin{equation}{section}
\newtheorem{theorem}{Theorem}[section]
\newtheorem{lemma}[theorem]{Lemma}
\newtheorem{proposition}[theorem]{Proposition}
\newtheorem{corollary}[theorem]{Corollary}
\newtheorem{definition}[theorem]{Definition}
\newtheorem{cond}[theorem]{Condition}
\theoremstyle{remark}
\newtheorem{remark}[theorem]{Remark}
\renewcommand\@biblabel[1]{\textbf{#1.}} 
\DeclareMathOperator{\tr}{tr}
\DeclareMathOperator{\dom}{dom}
\DeclareMathOperator{\dist}{dist}
\DeclareMathOperator{\Op}{Op}
\newcommand{\conjugate}[1]{\ensuremath{\mkern 1.5mu\overline{\mkern-1.5mu #1 \mkern-1.5mu}\mkern 1.5mu}}
\newcommand{\chiab}{\chi_{(-\alpha,\alpha)}}
\begin{document}

\title{\textbf{Formulas of Szeg\H o type for the 
periodic Schr\"odinger operator}} 

\author{\textsc{Bernhard Pfirsch \& Alexander V. Sobolev}} 

\subjclass[2010]{Primary 47G30, 35S05; Secondary 45M05, 47B10, 47B35, 81Q10}
\keywords{Periodic Schrödinger operators, asymptotic 
trace formulas, non-smooth functions of
Wiener--Hopf operators, entanglement entropy}

\date{\today} 

\newcommand{\Addresses}{{
  \bigskip

\textsc{Department of Mathematics,
University College London,
Gower Street,
London
WC1E 6BT, UK.}\par\nopagebreak
  \textit{e-mail:} \texttt{bernhard.pfirsch.15@ucl.ac.uk},\ \ \texttt{asobolev@math.ucl.ac.uk}
}}
\begin{abstract}
We prove asymptotic formulas of Szeg\H o type for the periodic Schrödinger operator $H=-\frac{d^2}{dx^2}+V$ in dimension one. Admitting fairly general functions $h$ with $h(0)=0$, we study the trace of the operator $h(\chi_{(-\alpha,\alpha)}\chi_{(-\infty,\mu)}(H)\chi_{(-\alpha,\alpha)})$ and link its subleading behaviour as $\alpha\to\infty$ to the position of the spectral parameter $\mu$ relative to the spectrum of $H$.
\end{abstract}

\maketitle 


\section{Introduction}

The classical Szeg\H o formula (see \cite{Szego1952}) describes 
the determinant of the truncated Toeplitz 
matrix as the truncation parameter tends to infinity, 
we refer to survey \cite{Kras} for discussion and further references. 
Our interest is closer to the continuous variant of this problem, 
i.e. to truncated Wiener-Hopf operators. 
Let $I\subset \mathbb R$ be a finite (open) interval, and let 
$a = a(\xi), \xi\in\mathbb R$ 
be a bounded, in general complex-valued function, 
which we call \textit{symbol}. By  
the \textit{truncated Wiener-Hopf operator} 
we understand the operator of the form 
\begin{align*}
W(a; I) = \chi_{I} \mathcal F^* a \mathcal F\chi_{I},
\end{align*} 
where $\mathcal F:{\rm L}^2(\mathbb R)\mapsto {\rm L}^2(\mathbb R)$ 
is the unitary Fourier transform, and $\chi_{I}$ is the indicator of the interval 
$I$. There is a vast literature studying the behaviour of the trace
\[
\tr h(W(a; \alpha I))
\]
 with 
a test function $h$, 
as the scaling parameter $\alpha$ tends to infinity. 
Assuming for simplicity 
that $h$ is continuous, 
one can claim that the above trace is 
finite if $h(0) = 0$ and the function $a$ decays sufficiently 
fast at infinity. 
We do not intend to give 
an extensive survey of known results, 
but only mention that, under the assumption that 
the functions $a$ and $h$ are smooth, one can find a complete asymptotic expansion of this trace 
in powers of $\alpha^{-1}$, see e.g. 
\cite{BuBu}, \cite{Widom1985}. Limited to two terms only, this expansion has the form
 \begin{align}\label{area:eq}
\tr h(W(a; \alpha I))
=   
 \frac{\alpha}{2\pi} |I| \int h\bigl(a(\xi)\bigr) d\xi 
 + \mathcal B + O(\alpha^{-1}), \ \alpha\to\infty,
 \end{align}
with an explicitly computable 
coefficient $\mathcal B = \mathcal B(a; h)$, independent of the interval $I$. Note that \cite{Widom1985} contains even the multidimensional version 
of the result.

In this paper we do not need the precise 
value of $\mathcal B$, since our main concern is the
case 
of a non-smooth symbol $a$.
Assume for the sake of discussion that $a = \chi_J$ 
where $J\subset \mathbb R$ 
is a bounded interval,  and that $h$ is a ${\rm C}^\infty$-function 
such that $h(0) = 0$. 
Then the results of 
\cite{LandauWidom}, and \cite{Widom1981} imply the asymptotic formula
\begin{align}\label{zero:eq}
\tr h(W(\chi_J; \alpha I))
=  \frac{\alpha}{2\pi} h(1) |I| |J|  
+ \log(\alpha) \mathcal W(h)+ o(\log(\alpha)), \ \alpha\to\infty,
\end{align}
with a coefficient $\mathcal W(h)$ independent 
of the intervals $I$ and $J$, see \eqref{whc} for the definition. 
Thus, one observes that 
the first term on the right-hand side is the same as in 
\eqref{area:eq}, but the second one exhibits a 
behaviour different 
from \eqref{area:eq}. 
The multidimensional generalization of this result, 
even with more general 
discontinuous symbols $a$ 
was obtained in \cite{Sobolev2010}, \cite{Sobolev2013}. 
Further extension to non-smooth 
functions $h$ was done in 
\cite{LeschkeSobolevSpitzer2014}, 
\cite{Sobolev2016}, \cite{Sobolev2016a}. 
The formula \eqref{area:eq} is a continuous analogue of 
the second-order Szeg\H o limit theorem, see \cite{Szego1952}, so 
we loosely refer to \eqref{area:eq} and \eqref{zero:eq} as 
\textit{Szeg\H o formulas}, or \textit{formulas of Szeg\H o type}. 
It is clear that under the condition 
$h(0) =  h(1) = 0$ the leading term in 
\eqref{zero:eq} vanishes, and 
the formula takes the form 
\begin{align}\label{en:eq}
\tr h(W(\chi_J; \alpha I))
= \log(\alpha) \mathcal W(h)+ o(\log(\alpha)), \ \alpha\to\infty,\ 
\textup{if} \ h(0) = h(1) = 0.
\end{align}
The increased recent interest in the asymptotic results of the described type 
with possibly non-smooth functions $h$ is 
partly due to their connection with the study of 
the \textit{bipartite entanglement entropy (EE)}, see e.g. 
\cite{GioevKlich2006}, \cite{Helling2009}, \cite{LeschkeSobolevSpitzer2014}, 
\cite{LeschkeSobolevSpitzer2016}. 
For instance, the formula 
\eqref{zero:eq}, used with the function 
\begin{align}\label{vneumann:eq}
\eta_1(t) = -t\log t - (1-t) \log(1-t), \ t\in [0, 1], 
\end{align} 
which is not differentiable at 
the endpoints of the interval $[0,1]$, 
would describe the scaling asymptotics of the 
von Neumann EE for free fermions in the Fermi 
sea $J$ at zero temperature, 
see \cite{Klich2006}, 
\cite{GioevKlich2006}. 
The function \eqref{vneumann:eq} is just one representative of the family 
\begin{align}\label{renyi:eq}
\eta_\gamma(t) = \frac{1}{1-\gamma} \log\big[
t^\gamma+(1-t)^\gamma
\big],\ t\in [0,1], 
\end{align}
with $\gamma>0$, where $\eta_1$ is defined as 
the limit of $\eta_\gamma$ as $\gamma\to 1$, $\gamma\not = 1$. 
Picking $h = \eta_\gamma$ one obtains 
from \eqref{zero:eq} the asymptotics of the 
\textit{$\gamma$-R\'enyi} EE, 
see e.g. \cite{LeschkeSobolevSpitzer2014}. Due to the condition 
$\eta_\gamma(0) = \eta_\gamma(1) = 0$, formula \eqref{en:eq} applies and the EE behaves 
as $\log(\alpha)$ as $\alpha\to \infty$. 

Let us remark at this point that 
there is an extensive physics literature on the 
topic of EE. However, we do not enter a detailed 
discussion of it in the context of this paper. 
For the interested reader we refer to general reviews 
\cite{Das2007}, \cite{Amico2008}, 
\cite{Calabrese2009}, 
\cite{LatorreRiera}, 
\cite{Laflorencie2016}, on the importance
of EE in the study of black holes, 
condensed matter systems and quantum information theory.
\vspace{0.5cm}

Having in mind the application to EE, 
a natural generalization of discussed questions 
is to move from 
free fermions to 
fermions in an external field. In mathematical terms that amounts to studying the trace of the operator 
\begin{align}\label{trH:eq}
h\big(\chi_{\alpha I}a(H)\chi_{\alpha I}\big),
\end{align}
where $H$ is some general self-adjoint one-particle Hamiltonian.  
Such an analysis for ergodic Hamiltonians $H$ was 
conducted in \cite{KirschPastur2014}, 
\cite{PasturSlavin2014}, \cite{ElgartPastur2016}, 
including multidimensional results. 
In this new 
setting a number of new and rather unexpected effects 
emerge. To give just one example, as follows from \cite{ElgartPastur2016}, 
the EE for Fermions at zero temperature in a disordered one-dimensional 
medium remains bounded as $\alpha\to\infty$, 
in contrast to the free case, mentioned above. 
 
Our objective in the present paper is to obtain formulas of Szeg\H o type for 
the operator 
\eqref{trH:eq}  
with the function $a = \chi_{(-\infty, \mu)}$, 
and with
$H$ being the Schr\"odinger operator
with a periodic potential in dimension one, i.e.  
$H = -d^2/dx^2 + V$, where $V$ is a real-valued periodic function. 
Without loss of generality we assume that the period equals $2\pi$. The parameter $\mu$ is naturally interpreted as 
the \textit{Fermi energy}.
 
In order to describe the results on 
an informal level, assume for simplicity that 
$h$ is a function such that $h(0) = h(1) = 0$. As stated in the 
the main theorem (see Theorem \ref{theorem}), 
the asymptotics 
depend on the position of the 
Fermi energy relative to the bands of the spectrum $\sigma(H)$. 
If $\mu$ is in a spectral 
gap then the trace remains bounded as $\alpha\to\infty$. If however, 
$\mu$ is inside a spectral band, then, somewhat surprisingly, the asymptotics 
are exactly as in the case $V\equiv 0$, i.e.  described by the formula 
\eqref{en:eq}.  The multi-dimensional case will be addressed in 
a subsequent publication. 

The methods used to prove Theorem \ref{theorem} 
may be seen as an extension of those presented in \cite{LandauWidom}, with 
some non-trivial modifications 
due to the presence of the periodic potential $V$. One important 
difference is that the 
reflection symmetry and continuous translational invariance of the Hamiltonian, central for the argument in 
\cite{LandauWidom}, are replaced with the 
invariance with respect to the lattice translations. 
Here we rely on the properties of Bloch functions associated with the 
operator $H$.   
Another new element is the extended choice of the test function $h$, 
see Condition \ref{h:cond}. In our paper the function $h=h(t),\ t \in [0, 1]$, 
is allowed to be piece-wise continuous, 
although we require $h$ to be H\"older-continuous at the endpoints  
$t = 0,1$. This is made possible by adjusting 
the Schatten-von Neumann class estimates 
 for pseudo-differential operators with 
discontinuous symbols, obtained in \cite{Sobolev2014}.
Consequently, functions such as \eqref{renyi:eq} are covered by our result. 

A few comments on the structure of this paper are in order. 
We begin with recalling some fundamental properties of one-dimensional 
periodic Schr\"odinger operators (cf. Section \ref{prelim:sect}) 
and afterwards state our results in Section \ref{sect:results}. 
The very first step of the proof is to obtain an 
approximation of the kernel of the spectral projection $P_\mu$ 
in terms of Bloch eigenfunctions corresponding to the Fermi energy $\mu$, 
see Section \ref{sect:expintkern}.  
Section \ref{sect:elemtrnest} contains some elementary trace class estimates, 
similar to the ones obtained in \cite{LandauWidom}. Here we also introduce an averaging procedure 
for a particular type of integral operators 
(see sub-Section \ref{almperbymean:subsec}) that allows 
us to average out the precise dependence on the Bloch 
eigenfunctions at Fermi energy $\mu$. This is sufficient to 
prove Theorem \ref{theorem} for polynomial functions $h$, see Section 
\ref{sect:proofpol} and sub-Section \ref{ap:subsect}. 
As mentioned earlier, 
the extension to non-smooth functions calls for more advanced bounds 
in Schatten-von Neumann classes. These bounds are collected in 
Section \ref{sect:schattenest}. The extension to non-smooth 
$h$, i.e. the closure of the asymptotics from the polynomial $h$, 
is implemented in sub-Section  \ref{closure:subsect}.

To conclude the introduction let us fix some general notation. 
If $f$, $g$ are non-negative functions, we write
$f\ll g$ if $f\leq Cg$ for some 
constant $C\geq 0$. This constant may depend on the potential 
$V$ but does not depend on the dilation parameter 
$\alpha$. To avoid confusion we sometimes 
make explicit comments 
on the nature of (implicit) constants in the bounds.

For a set $I\subset \mathbb R$ the notation $I^\circ$ 
is used for the set of all interior points of $I$ 
and its Lebesgue measure is denoted by 
$|I|$. In many situation 
(e.g. for the intervals $I$, $J$, $K$ in Section \ref{sect:elemtrnest}) 
it will not matter whether considered intervals are open, semi-open or closed. 
Whenever this is the case we shall use open intervals only.
  
\textit{Acknowledgement.} 
The authors are grateful to E. Korotyaev for his advice 
on periodic Schr\"odinger operators.

\section{Preliminaries}\label{prelim:sect}

We consider a periodic Schrödinger operator
\begin{align*}
{H}=-\frac{d^2}{dx^2}+V(x), \ \dom(H)={\rm H}^2(\mathbb{R}),
\end{align*}
in dimension $1$. 
More precisely, let the potential $V$ be a real-valued 
$2\pi$-periodic ${\rm L}^2_{\textup{\tiny loc}}$-function, 
so that the operator $H$ 
is self-adjoint 
on ${\rm H}^2(\mathbb R)$. 
For $\mu\in\mathbb{R}$, we introduce the notation 
$P_\mu:=\chi_{(-\infty, \mu)}(H)$ for the spectral 
projection of $H$. We shall use $\alpha>0$ as a 
dilation parameter and write $\chi_I$ for the 
indicator  
function of the interval $I\subset \mathbb R$ as 
well as for the corresponding multiplication 
operator on ${\rm L}^2(\mathbb{R})$. 
For an appropriate choice of the function $h$, we 
are interested in an asymptotic formula for the trace 
\begin{align}\label{bal:eq}
\tr h(B_{\alpha, \mu}),\  
B_{\alpha, \mu} = \chi_{(-\alpha, \alpha)}
P_\mu \chi_{(-\alpha, \alpha)},
\end{align}
as $\alpha\to\infty$. 

We heavily rely on 
the standard Floquet-Bloch theory for periodic operators, 
see e.g. \cite{ReedSimon4}, \cite{TODE}. 
In particular, we make use of the Floquet-Bloch-Gelfand transform
\begin{align*}
U:{\rm L}^2(\mathbb{R})\longrightarrow 
{\rm L}^2\big( \mathbb T, {\rm L}^2(0,2\pi)\big), 
\mathbb T = \mathbb R/\mathbb Z.
\end{align*} 
For Schwartz class functions 
or ${\rm L}^2(\mathbb{R})$-function with compact support, 
it is given by
\begin{align*}
(U\psi)(x, k):=\sum\limits_{\gamma\in 
2\pi\mathbb{Z}}e^{-ik\gamma}\psi(x+\gamma),\ k\in \mathbb T,\ x\in[0,2\pi].
\end{align*}
The operator $U$ is easily checked to be isometric, 
and hence it extends by continuity 
as a unitary operator to the entire 
${\rm L}^2(\mathbb{R})$. 
Under $U$ the periodic Schrödinger operator $H$ 
transforms into the direct integral
\begin{align*}
UHU^\ast = \int\limits_{\mathbb T}^{\oplus} H(k) dk,
\end{align*}
with self-adjoint fibres 
\begin{align}
H(k)= &\ -\frac{d^2}{dx^2}+V(x),\notag\\[0.2cm]
\dom\big(H(k)\big) = &\ \lbrace f\in {\rm H}^2(0,2\pi):
 f(2\pi)=e^{2\pi i k}f(0), \ f'(2\pi)=e^{2\pi i k}f'(0)\rbrace,\label{dom:eq}
\end{align} 
that are well-defined for $k\in \mathbb T$. 
It is well-known that each fibre operator $H(k)$ has compact resolvent and, therefore, a discrete spectrum that consists of eigenvalues 
$\lambda_j(k),\ j = 1, 2, \dots,$ labelled in ascending order 
counting multiplicity. 
Denote the corresponding normalized eigenfunctions by 
$\phi_j(k) = \phi_j(\ \cdot\ , k)\in {\rm H}^2(0, 2\pi)$, 
$j= 1, 2, \dots$. 
 
It is clear that for all $k\in \mathbb R$ the functions 
\begin{align}\label{ejdef}
{\rm e}_j(x, k):= e^{-i k x} \phi_j(x, k)
\end{align}
and their derivatives $d{\rm e}_j/dx$ can be extended to all $x\in\mathbb R$  
as $2\pi$-periodic functions,   
which induces  a corresponding extension of 
$\phi_j(\ \cdot\ , k)$. 
Using the eigenfunctions $\phi_j(k)$ 
we can write out the kernel $P_\mu(x, y)$ of the projection 
$P_\mu$:
\begin{align}\label{proj:eq}
P_\mu(x, y) = \sum_j \int\limits_{\mathbb T} 
\chi_{(-\infty, \mu)}(\lambda_j(k)) 
\phi_j(x, k) \conjugate{\phi_j(y, k)} dk.
\end{align}
%
%
In the next proposition we summarize the properties of 
the functions $\phi_j(k)$ and eigenvalues $\lambda_j(k)$ that we use 
further on. The points $k=0$ and $k=\frac{1}{2}$ will play a special role, so it makes sense to introduce temporarily the notation
\begin{align*}
\mathbb T_0 = \mathbb T\setminus\bigg(\{0\}\cup\bigg\{\frac12\bigg\}\bigg).
\end{align*}
 
\begin{proposition}\label{basic:prop}
Let $H(k), k\in\mathbb T,$ be as defined above. Then 
\begin{enumerate}
\item
 For every $k\in \mathbb T$ 
 the operators $H(k)$ and $H(-k)$ are antiunitarily equivalent under 
 complex conjugation. In particular, 
 $\lambda_j(k) = \lambda_j(-k)$ for all $j = 1, 2, \dots$.
\item 
The eigenfunctions $\phi_j(\ \cdot\ , k), j = 1, 2, \dots,$
can be chosen to be analytic in $k\in \mathbb T_0$, and such that 
$\phi_j(-k) = \conjugate{\phi_j(k)}$, $k\in\mathbb T_0$. 
\item
The eigenvalues $\lambda_j(k),\ j = 1, 2, \dots,$ are even 
continuous functions of $k\in \mathbb T$. 
These eigenvalues are simple and analytic on $\mathbb T_0$. 
\item\label{incr:item}
For $j$ odd (resp. even) each $\lambda_j(\ \cdot\ )$ is strictly increasing (resp. 
decreasing) on $(0, \frac{1}{2})$.
\end{enumerate}
\end{proposition}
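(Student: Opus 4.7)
The plan is to dispatch the four parts in sequence, relying throughout on classical Floquet theory.

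For part~(1) I would use the antiunitary complex-conjugation operator $C\colon f\mapsto\overline{f}$ on ${\rm L}^2(0,2\pi)$. Since $V$ is real, $C$ commutes with the differential expression $-d^2/dx^2+V$; and because conjugation replaces $e^{2\pi ik}$ by $e^{-2\pi ik}$, the boundary conditions in \eqref{dom:eq} transform correctly, so $C$ maps $\dom(H(k))$ bijectively onto $\dom(H(-k))$ and satisfies $CH(k)=H(-k)C$. This yields the evenness $\lambda_j(k)=\lambda_j(-k)$. The continuity of $\lambda_j$ on $\mathbb T$ asserted in~(3) then follows from standard eigenvalue continuity under norm-resolvent convergence of the family $H(k)$.

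For part~(2) and the analyticity/simplicity statements in~(3), I would first remove the $k$-dependence of the domain via the unitary $(U_kf)(x)=e^{ikx}f(x)$, which maps $\dom(H(0))$ (periodic boundary conditions) bijectively onto $\dom(H(k))$. The transformed family $\widetilde H(k):=U_k^*H(k)U_k=(-id/dx+k)^2+V$ is holomorphic of type~(A) in $k\in\mathbb C$ on this fixed domain. Simplicity for $k\in\mathbb T_0$ is then a monodromy argument: an eigenvalue $\lambda$ of $H(k)$ has geometric multiplicity equal to that of $e^{2\pi ik}$ as an eigenvalue of the transfer matrix $M(\lambda)\in\mathrm{SL}(2,\mathbb C)$, and since the second eigenvalue must be $e^{-2\pi ik}$, a double eigenvalue forces $e^{4\pi ik}=1$, i.e.\ $k\in\{0,\tfrac12\}$. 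Kato's analytic perturbation theorem then provides analytic eigenvalues and eigenvectors of $\widetilde H(k)$ on each of the two connected components of $\mathbb T_0$, and transporting back via $U_k$ yields analytic $\phi_j(\cdot,k)$. The identity $\phi_j(-k)=\overline{\phi_j(k)}$ is enforced as a gauge choice: by~(1), $\overline{\phi_j(\cdot,k)}$ already lies in the one-dimensional eigenspace of $H(-k)$ for $\lambda_j(k)$, and the free unimodular phase in the analytic selection can be fixed on each component of $\mathbb T_0$ to make this identification.

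For part~(4), the key tool is the Floquet discriminant $\Delta(\lambda)=\tr M(\lambda)$, which is entire in $\lambda$, real on $\mathbb R$, and satisfies $\Delta(\lambda_j(k))=2\cos(2\pi k)$ on the $j$-th band. Differentiating in $k$ gives
\begin{equation*}
\lambda_j'(k)=-\frac{4\pi\sin(2\pi k)}{\Delta'(\lambda_j(k))},\qquad k\in(0,\tfrac12).
\end{equation*}
The classical non-vanishing of $\Delta'$ on the interior of each band together with its alternating sign ($\Delta'<0$ on odd-numbered bands, $\Delta'>0$ on even-numbered bands, pinned down by the band-edge values $\Delta=\pm 2$) and the positivity of $\sin(2\pi k)$ on $(0,\tfrac12)$ then yield strict increase of $\lambda_j$ for odd~$j$ and strict decrease for even~$j$. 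The principal technical point is the coherent analytic and conjugation-symmetric selection of the eigenfunctions in~(2): analytic perturbation theory furnishes eigenvectors only up to a locally analytic unimodular phase, and matching this freedom with the reflection symmetry of~(1) on each connected component of $\mathbb T_0$ has to be checked by hand, while the remaining steps reduce to well-established Floquet-theoretic computations.
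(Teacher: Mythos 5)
The paper does not actually prove Proposition~\ref{basic:prop}; it records that the statement is well-known and cites Kato, Reed--Simon, and Firsova for the analytic perturbation theory behind it. Your sketch reconstructs precisely the standard Floquet-theoretic route those references take: the antiunitary conjugation equivalence $CH(k)=H(-k)C$ for evenness in~(1), the gauge transform $(U_kf)(x)=e^{ikx}f(x)$ producing the holomorphic type~(A) family $(-i\partial_x+k)^2+V$ on a fixed periodic domain, the $\mathrm{SL}(2,\mathbb C)$ transfer-matrix/monodromy argument showing that a geometrically double Floquet multiplier forces $e^{4\pi ik}=1$, Kato's theorem for analyticity of $\lambda_j$ and $\phi_j$ on $\mathbb T_0$, and the discriminant identity $\Delta(\lambda_j(k))=2\cos(2\pi k)$ together with the alternating sign of $\Delta'$ on band interiors for the strict monotonicity in~(4). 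All of this is correct and consistent with what the cited literature supplies.

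One small simplification worth noting: the conjugation-symmetric gauge in part~(2) is not really something that has to be ``checked by hand'' against the analytic selection. Because $\mathbb T_0$ is the disjoint union of the two open arcs $(0,\tfrac12)$ and $(-\tfrac12,0)$, you may simply invoke Kato to choose a real-analytic, normalised $\phi_j(\cdot,k)$ on $(0,\tfrac12)$ and then \emph{define} $\phi_j(\cdot,k):=\overline{\phi_j(\cdot,-k)}$ on $(-\tfrac12,0)$. By part~(1) this is a normalised eigenfunction of $H(k)$ for the eigenvalue $\lambda_j(k)$, and real-analyticity in $k$ is inherited automatically. Since the two arcs share no boundary inside $\mathbb T_0$, there is no compatibility condition to match, so the conjugation symmetry comes for free rather than being an additional constraint on the phase gauge.
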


\vskip 0.5cm

Let  
\begin{align}\label{kj:eq}
k_j =
\begin{cases}
0,\ \ j \ \textup{odd},\\[0.2cm]
\frac{1}{2},\ \ j\ \textup{even}.
\end{cases}
\end{align}
Denote
\begin{align*}
\mu_j = \lambda_j(k_j),\ \nu_j = \lambda_j\bigg(k_j+\frac{1}{2}\bigg),\ 
\ \ \ 
\sigma_j = [\mu_j, \nu_j],\ j = 1, 2, \dots.
\end{align*}
The spectrum $\sigma(H)$ of $H$ is represented 
as the union of \textit{spectral bands} $\sigma_j$:
\begin{align*}
\sigma(H) = \bigcup\limits_{j=1}^\infty \, \sigma_j.
\end{align*}
It follows from Proposition \ref{basic:prop}(\ref{incr:item}) 
that the bands $\sigma_j$ are non-degenerate, i.e. 
$|\sigma_j|>0$ for every $j=1, 2, \dots.$  
Introduce the counting function of $H(k)$: 
\begin{align*}
N(\mu, k) = \# \lbrace j: \lambda_j(k)< \mu \rbrace, \ \mu\in\mathbb{R}, \ 
k\in\mathbb T,
\end{align*}
and \textit{the (integrated) density of states:}
\begin{align}\label{density:eq}
N(\mu; H)=\frac{1}{2\pi}\underset{\mathbb T}\int N( \mu, k)\, dk.
\end{align}
In view of Proposition \ref{basic:prop}(\ref{incr:item}) again,
the function \eqref{density:eq} is continuous.  
The definition \eqref{density:eq} agrees 
with the standard definition of the density of states which is given via the Hamiltonian with Dirichlet 
boundary condition on a large cube, see e.g. 
\cite[Theorem 4.2]{Sh1} or \cite[Ch. XIII]{ReedSimon4}.

Note also that the spectral bands cannot overlap, but they may touch. 
This situation is our main concern in the next proposition.

\begin{proposition} \label{basic1:prop}
Let $\lambda_j=\lambda_j(k)$, 
$\phi_j = \phi_j(k)$ be as described in Proposition \ref{basic:prop}. 
Then 
\begin{enumerate}
\item
If for some $j$ 
the bands $\sigma_{j-1}$ and $\sigma_j$ are separated from each other, 
i.e. $\nu_{j-1}< \mu_j$, then the eigenvalues $\lambda_{j-1}(\ \cdot\ )$, 
$\lambda_{j}(\ \cdot\ )$ 
and eigenfunctions $\phi_{j-1}(x, \ \cdot\ )$, 
$\phi_{j}(x, \ \cdot\ )$ 
are analytic in $k $ in a neighbourhood 
of $k_j$, 
for each $x\in \mathbb R$. Furthermore, 
the functions 
$\phi_{j-1}(\ \cdot\ , k_j)$ and $\phi_j(\ \cdot\ , k_j)$ 
are real-valued. 
\item\label{touch:item}
If for some $j$ we have $\nu_{j-1} = \mu_j$, i.e. 
$\lambda_{j-1}(k_j) = 
\lambda_{j}(k_j)$, 
then 
in a neighbourhood of $k_j$, the eigenvalues 
$\lambda_{j-1}$ and $\lambda_{j}$, and 
the eigenfunctions 
$\phi_{j-1}(x, \ \cdot\ )$ 
and 
$\phi_{j}(x, \ \cdot\ )$ 
are analytic continuations of each other.
Moreover, $\lambda_l'(k_j\pm) \not = 0$, 
$\phi_l(k_j-) = \conjugate{\phi_l(k_j+)}$ , 
and the limits $\phi_l(k_j-)$ and $\phi_l(k_j+)$ are 
mutually ${\rm L}^2$-orthogonal 
for $l = j-1, j$. 
\end{enumerate}
\end{proposition}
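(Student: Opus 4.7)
My approach is to apply Kato's analytic perturbation theory to the fibre family $\{H(k)\}_{k \in \mathbb T}$. After conjugating by $e^{ikx}$, the operators take the form $\widetilde H(k) = (-i\partial_x + k)^2 + V$ on the $k$-independent domain of periodic ${\rm H}^2$-functions on $(0,2\pi)$; this is an entire analytic family of type (A). Its eigenvalues coincide with $\lambda_j(k)$ and its eigenfunctions are $e^{-ikx}\phi_j(x,k) = {\rm e}_j(x,k)$, so every analyticity or orthogonality statement transfers to $\phi_j$ after multiplication by the analytic factor $e^{ikx}$.

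For part (1), the band-separation hypothesis $\nu_{j-1} < \mu_j$ makes $\lambda_{j-1}(k_j) = \nu_{j-1}$ and $\lambda_j(k_j) = \mu_j$ simple eigenvalues of $\widetilde H(k_j)$. The Kato--Rellich theorem then supplies real-analytic eigenvalue branches $\lambda_l(k)$ and analytic eigenfunction branches $\phi_l(x,k)$ in a neighbourhood of $k_j$ for $l = j-1, j$. For the real-valuedness, I would observe that $-k_j \equiv k_j$ on $\mathbb T$, so by Proposition \ref{basic:prop}(1) the fibre $H(k_j)$ coincides with its antiunitary conjugate under complex conjugation; equivalently $-d^2/dx^2 + V$ with purely periodic or antiperiodic boundary conditions is a real operator, so a simple eigenvalue admits a real eigenfunction.

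Part (2) is more delicate because the touching condition $\nu_{j-1} = \mu_j =: \mu$ produces a double eigenvalue of $\widetilde H(k_j)$. The Rellich version of Kato's theorem for analytic self-adjoint families on a real interval provides two analytic eigenvalue branches $\widetilde\lambda_\pm(k)$ through $\mu$ at $k_j$, together with two analytic spectral projections $\widetilde P_\pm(k)$ (mutually orthogonal for $k \neq k_j$ since the two branches are then distinct eigenvalues, hence by continuity orthogonal at $k_j$ as well, where they sum to the rank-two spectral projection of $H(k_j)$ onto its eigenspace at $\mu$). To show that $\widetilde\lambda_+$ and $\widetilde\lambda_-$ cross transversally at $k_j$, I would use the Hill discriminant $D(\lambda)$ (the trace of the monodromy matrix), which satisfies the identity $D(\lambda_j(k)) = 2\cos(2\pi k)$. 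A band touching forces $D'(\mu) = 0$; differentiating the identity twice and evaluating at $k = k_j$ yields
\[
D''(\mu)\bigl(\lambda_j'(k_j\pm)\bigr)^2 = -8\pi^2\cos(2\pi k_j) = \mp 8\pi^2 \neq 0,
\]
which simultaneously establishes $D''(\mu) \neq 0$ and $\lambda_l'(k_j\pm) \neq 0$ for $l = j-1, j$.

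With the transversal crossing in hand, the ordering convention in Proposition \ref{basic:prop} forces each of $\lambda_{j-1}$ and $\lambda_j$ to switch from one analytic branch $\widetilde\lambda_\pm$ to the other across $k_j$, which is precisely the statement that they are analytic continuations of each other; the corresponding eigenfunctions inherit this structure. The relation $\phi_l(k_j-) = \overline{\phi_l(k_j+)}$ follows immediately from Proposition \ref{basic:prop}(2) together with $-k_j \equiv k_j$ on $\mathbb T$. Finally, since $\phi_l(k_j+)$ and $\phi_l(k_j-)$ are the values at $k_j$ of the two distinct analytic eigenfunction branches, they lie in the ranges of the two orthogonal projections $\widetilde P_\pm(k_j)$ and are therefore ${\rm L}^2$-orthogonal. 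The main bookkeeping obstacle I anticipate is matching the labelled quantities $\lambda_l, \phi_l$ on each side of $k_j$ to the correct analytic branch $\widetilde\lambda_\pm, \widetilde\phi_\pm$: this requires combining the ordering dictated by Proposition \ref{basic:prop}(4) with the opposing signs of $\widetilde\lambda_\pm'(k_j)$ extracted from the discriminant identity.
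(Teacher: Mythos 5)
Your argument is correct and reaches the same conclusions, but it does so by a more self-contained route than the paper. The paper's treatment of this proposition is essentially a collection of citations: analyticity near $k_j$ is attributed to abstract theorems of Kato and Reed--Simon, and the transversal crossing $\lambda_l'(k_j\pm)\neq 0$ is taken from Firsova (formula (3.1) of that reference); only the final orthogonality claim is argued in the text, and there the paper uses a slightly different observation than you do — namely that $\phi_{j-1}(k)$ and $\phi_j(k)$ are orthogonal for each $k\in\mathbb T_0$ near $k_j$, combined with the branch-matching identity $\phi_{j-1}(k_j\pm)=\phi_j(k_j\mp)$, to conclude $\langle\phi_j(k_j-),\phi_j(k_j+)\rangle=0$ by continuity. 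Your projection-based argument (the limits lie in the ranges of the mutually orthogonal Rellich projections $\widetilde P_\pm(k_j)$) is entirely equivalent. The genuinely new content you supply is the Hill discriminant computation: starting from $D(\lambda(k))=2\cos(2\pi k)$, noting that band touching forces $D'(\mu)=0$ because $\mu$ is then interior to the spectrum and hence a local extremum of $D$ at the level $\pm 2$, and differentiating twice to obtain $D''(\mu)\bigl(\lambda'(k_j)\bigr)^2=-8\pi^2\cos(2\pi k_j)\neq 0$, which rules out $\lambda'(k_j)=0$. This replaces the citation to Firsova with a short, complete proof, which is a reasonable thing to want to have on hand. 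Your type-(A) reformulation $\widetilde H(k)=(-i\partial_x+k)^2+V$ on the fixed periodic-$\mathrm H^2$ domain and the real-valuedness argument for simple eigenvalues at $k_j\in\{0,\tfrac12\}$ are both standard and correct.
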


\vskip 0.5cm

Although Propositions \ref{basic:prop} and 
\ref{basic1:prop} are well-known, we need to make some comments. 
The analyticity on $\mathbb T_0$ 
in Proposition \ref{basic:prop} is a straightforward consequence of 
the analytic perturbation theory, see \cite{Kato} or \cite{ReedSimon4}. 
The analyticity of $\phi_j(k)$ and $\lambda_j(k)$   
in the vicinity of points $k_j$ in 
Proposition \ref{basic1:prop} is a more subtle fact, and it 
follows from abstract theorems 
\cite[Ch. II, Theorems 1.9, 1.10]{Kato}
and \cite[Theorems XII.12, XII.13]{ReedSimon4}. 
In the context of the periodic operators the analytic properties of 
eigenfunctions $\phi_j$ are described 
in \cite{Fir1} and \cite{Fir2}. 
Also, the relation $\lambda_l'(k_j\pm)\not = 0$ from Proposition \ref{basic1:prop}(\ref{touch:item}) can be found e.g. in \cite[formula (3.1)]{Fir2}.

Assume again that $\nu_{j-1} = \mu_{j}$, 
i.e. the bands $\sigma_{j-1}$ and $\sigma_{j}$ have one 
common point. 
Since $\phi_{j-1}(k)$ and $\phi_{j}(k)$ are orthogonal for 
all $k\in \mathbb T_0$, and $\phi_{j-1}(k_j\pm) = \phi_{j}(k_j\mp)$, 
the functions $\phi_j(k_j-)$ and $\phi_j(k_j+)$ are mutually orthogonal, 
as claimed in Proposition \ref{basic1:prop}(\ref{touch:item}). 
In view of the identity $\phi_j(k_j-) = \conjugate{\phi_j(k_j+)}$, 
this implies that 
\begin{align}\label{meant:eq}
\int\limits_0^{2\pi} \phi_j(x, k_j\pm)^2 dx = 0,\ \ \textup{if}\ \ 
\nu_{j-1} = \mu_{j}.
\end{align}
It is natural to group the bands that have common points (i.e. touch) 
together. Suppose that 
the bands 
$\sigma_j, \sigma_{j+1},\ \dots, \sigma_{j+n-1}$  are of this type 
and that $\sigma_{j-1}\cap\sigma_j = \varnothing$, \ 
$\sigma_{j+n-1}\cap\sigma_{j+n} = \varnothing$. 
Thus the interval 
\begin{align}\label{properband:eq}
S = \bigcup\limits_{l=0}^{n-1} \sigma_{j+l} = [\mu_j, \nu_{j+n-1}],
\end{align}
is a ``genuine" spectral band. 
Sometimes we informally use this term, ``genuine", to distinguish 
the bands $\{\sigma_j\}$ and $S$.
Using this construction, we can somewhat simplify the description of 
the spectral structure of $H$ inside $S$. 
Indeed, define on $[k_j-n/2, k_j+n/2]$ the real-valued 
function  
\begin{align*}
\Lambda(k)
= &\ \lambda_{j+l}(k),\ k\in \biggl[k_j 
+ \frac{l}{2}, k_j+ \frac{l+1}{2}\biggr], \ l = 0, 1, \dots, n-1,\\[0.2cm]
\Lambda(k) = &\ \Lambda(2k_j-k),\ k\in 
\biggl[k_j-\frac{n}{2}, k_j\biggr].
\end{align*}
According to Propositions \ref{basic:prop} 
and \ref{basic1:prop}, the above function is analytic on the circle  
$n\mathbb T = \mathbb R/n\mathbb Z$, monotone increasing in 
$k\in [k_j, k_j + n/2]$, and symmetric in $k = k_j$. 
Note also that 
\begin{align}\label{infsup:eq}
\Lambda(k_j) = \mu_j>\nu_{j-1},
\ \Lambda(k_j+n/2) = \nu_{j+n-1} < \mu_{j+n}. 
\end{align}
In the same way, one defines the eigenfunction $\Phi(x, k)$ that 
incorporates all of the $\phi_{j+l}$'s, $l = 0, 1, \dots, n-1$:
\begin{align}
\Phi(k)
= &\ \phi_{j+l}(k),\ k\in \biggl[k_j + \frac{l}{2}, k_j+ \frac{l+1}{2}\biggr], \ l = 0, 1, \dots, n-1,\notag\\[0.2cm]
\Phi(k) = &\ \conjugate{\Phi(2k_j-k)},\ k\in 
\biggl[k_j-\frac{n}{2}, k_j\biggr].\label{symPhi:eq}
\end{align}
 Similarly to $\Lambda(\ \cdot\ )$, 
 the function $\Phi(x, \ \cdot\ )$ is analytic on 
the circle $n\mathbb T$. The functions $\Phi(\ \cdot\ , k_j)$ 
and $\Phi(\ \cdot\ , k_j+n/2)$, associated with the ends 
of the band $S$, are real-valued. It is also useful to 
define the function 
\begin{align}\label{Edef:eq}
E(x, k) = e^{-ix k}\Phi(x, k),
\end{align}
built out of the functions \eqref{ejdef} in the same way as $\Phi(k)$ 
is built out of $\phi_j(k)$'s. The functions $E(x, k)$ are analytic in 
$k\in \mathbb R$, 
and $2\pi$-periodic in $x\in\mathbb R$. 

 Of course, it may happen that \textit{all} bands 
starting with $\sigma_j$, 
touch. In this case the above construction still works and yields analytic functions $\Phi$, $\Lambda$, and $E$ on $\mathbb{R}$. To keep the notation simple we shall allow in the following $n=\infty$ and use the convention $\infty \mathbb{T}=\mathbb{R}$.

Using the functions 
$\Lambda$ and $\Phi$ we can write the spectral 
representation of the operator $H$ as follows. 
Let $P[S]$ be the spectral projection 
of $H$ corresponding to a band $S$ defined as in \eqref{properband:eq}. Then
\begin{align*}
UH P[S]U^* 
= \int\limits_{n\mathbb T} \Lambda(k) P[\Phi(k)]\, dk,
\end{align*} 
where 
$P[\psi]$ is the orthogonal projection in 
${\rm L}^2(0, 2\pi)$ on the span of the function $\psi\in {\rm L}^2(0, 2\pi)$. 
As a consequence, 
the formula \eqref{proj:eq} leads to the following formula for 
the kernel $P_\mu[S](x, y)$ of the projection 
$P_\mu[S]:= P_\mu P[S]$: 
\begin{align}\label{projS:eq}
P_\mu[S](x, y) = 
\int\limits_{k\in n\mathbb T: \Lambda(k) < \mu}\mkern-26mu \Phi(x, k)
\conjugate{\Phi(y, k)}\, dk.
\end{align}

Given the properties of the eigenfunction $\Phi(\ \cdot\ , k), j= 1, 2, \dots,$ 
for every $k\in n\mathbb T$, it belongs to the algebra ${\rm CAP}(\mathbb R)$ 
of  continuous 
almost-periodic functions on $\mathbb R$, 
which is defined as the closure of the span of exponentials 
$e^{ix\xi},\ \xi\in \mathbb R,$ in the ${\rm L}^\infty$-norm. 
For any $f\in {\rm CAP}(\mathbb R)$ 
the \textit{almost-periodic mean} 
\begin{align*}
\mathcal{M}(f):=\lim\limits_{T\to\infty}(2T)^{-1}\int\limits_{-T}^T dt\,f(t)
\end{align*} 
is well-defined.
For an introduction to almost periodic functions and their 
properties we refer to \cite{Sh1} or \cite{Shubin1978}. 
 
For the future use we need to evaluate some means
for the eigenfunctions $\Phi(k)$.

\begin{lemma}\label{mean:lem} 
Let $\Phi = \Phi(\ \cdot\ , k)$ be the eigenfunction associated with the band $S$, 
see \eqref{properband:eq}. Then 
\begin{align}\label{mean:eq}
\mathcal M(|\Phi|^2) = \frac{1}{2\pi},\ \forall k\in n\mathbb T,
\end{align}
and 
\begin{align}\label{mean0:eq}
\mathcal M(\Phi^2) = 0,\ 
\forall k\not=k_j, k\not = k_j+n/2.
\end{align}

\end{lemma}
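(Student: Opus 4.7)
The plan is to exploit the factorization $\Phi(x, k) = e^{ikx} E(x, k)$, where $E(\cdot, k)$ is $2\pi$-periodic in $x$ by \eqref{Edef:eq}, and to reduce both identities to elementary computations with periodic functions.

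For \eqref{mean:eq}, I would observe that $|\Phi(x,k)|^2 = |E(x,k)|^2$ is $2\pi$-periodic in $x$, so its almost-periodic mean coincides with its period average. Since $\Phi(\cdot, k)$ is built from the normalized fibre eigenfunctions $\phi_{j+l}(\cdot, k)$, each of unit ${\rm L}^2(0, 2\pi)$-norm, this gives
\begin{align*}
\mathcal{M}(|\Phi|^2) = \frac{1}{2\pi}\int_0^{2\pi} |\Phi(x,k)|^2\, dx = \frac{1}{2\pi}.
\end{align*}

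For \eqref{mean0:eq}, I would expand the $2\pi$-periodic function $E(\cdot, k)^2$ in Fourier series, $E(x, k)^2 = \sum_{m \in \mathbb Z} c_m(k) e^{imx}$, so that
\begin{align*}
\Phi(x, k)^2 = \sum_{m \in \mathbb Z} c_m(k)\, e^{i(m + 2k)x}.
\end{align*}
Since $\mathcal{M}(e^{i\xi \cdot}) = 0$ whenever $\xi \neq 0$, the mean of $\Phi^2$ automatically vanishes unless $2k \in \mathbb Z$, and at such half-integer points it equals $c_{-2k}(k) = (2\pi)^{-1}\int_0^{2\pi}\Phi(x,k)^2\, dx$. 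The problem therefore reduces to showing that the latter integral vanishes at every half-integer $k \in n\mathbb T$ other than $k_j$ and $k_j + n/2$. Those remaining points are exactly $k = k_j + l/2$ with $l \in \{1, \dots, n-1\}$, i.e.\ the internal touching points of the band $S$. At such $k$, the construction yields $\Phi(\cdot, k) = \phi_{j+l}(\cdot, k)$, and the parity convention \eqref{kj:eq} ensures that $k_j + l/2$ coincides modulo $1$ with $k_{j+l}$. Since $\sigma_{j+l-1}$ and $\sigma_{j+l}$ are touching bands inside $S$, we have $\nu_{j+l-1} = \mu_{j+l}$, and \eqref{meant:eq} applies to give $\int_0^{2\pi}\phi_{j+l}(x, k_{j+l})^2\, dx = 0$, as needed.

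The only real subtlety is the bookkeeping at the internal touching points — the identification $k_j + l/2 \equiv k_{j+l} \pmod 1$ — that allows one to invoke \eqref{meant:eq}. It is also worth noting that the exclusion of the endpoints $k_j$ and $k_j + n/2$ is sharp: there $\Phi(\cdot, k)$ is real-valued by Proposition \ref{basic1:prop}(1), so $\Phi^2 = |\Phi|^2$ and the mean at those two points equals $1/(2\pi)$.
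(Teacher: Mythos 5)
Your argument follows essentially the same route as the paper's proof: \eqref{mean:eq} comes from the normalization of $\Phi$ together with the $2\pi$-periodicity of $|E|^2$, and for \eqref{mean0:eq} one splits according to whether $2k\in\mathbb Z$, using cancellation of the phase $e^{2ikx}$ against the periodic function $E^2$ in the generic case, and reducing to \eqref{meant:eq} (after correctly identifying $k_j+l/2\equiv k_{j+l}\pmod 1$) at the internal touching points. The one place where your write-up is slightly less careful than the paper's is the generic case: the paper uses a uniform trigonometric-polynomial approximation of $E^2$ to justify passing to the limit, whereas you expand $E^2$ in a Fourier series and interchange the sum with the almost-periodic mean; that interchange is fine here because $E\in{\rm H}^2(0,2\pi)$ so $E^2\in{\rm H}^1(0,2\pi)$ has an absolutely convergent Fourier series, but this justification should be made explicit rather than left implicit.
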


\begin{proof}
The function $\Phi$ 
is normalised in ${\rm L}^2(0,2\pi)$, 
whence $\mathcal M(|\Phi|^2) = (2\pi)^{-1}$, 
as claimed in \eqref{mean:eq}. 

To prove \eqref{mean0:eq},  suppose first that 
$2k\not\equiv 1\!\mod \mathbb Z$, 
so that  
$k\not = k_j\pm l/2$, $l = 0, 1, \dots, n$. 
We use the representation \eqref{Edef:eq}, so 
\begin{align*}
\mathcal M(\Phi^2) = \mathcal M(e^{2ikx}E^2).
\end{align*}
The function $w = E(\ \cdot\ , k)^2$ is continuous and 
$2\pi$-periodic. 
Picking an $\varepsilon>0$ we can 
approximate $w$ by trigonometric polynomials
\begin{align*}
p(x) = \sum_{s=-N}^N p_s e^{isx},
\end{align*}
so that 
$w = p + \tilde p$, where 
$\tilde p$ is a continuous periodic function such that 
$|\tilde p|<\varepsilon$. 
Let us find the mean for each 
component of the polynomial $p$ separately:
\begin{align*}
\int\limits_{-T}^T e^{2ikx + isx} dx 
= \left.\frac{e^{i(2k+s)x}}{i(2k+s)}\right|_{-T}^T,
\end{align*} 
which is bounded uniformly in $T$ for all $s = -N, -N+1, \dots, N$. Thus 
$\mathcal M(e^{2ikx}p) = 0$, and hence 
\begin{align*}
|\mathcal M(\Phi^2)| = |\mathcal M(e^{2ikx} \tilde p)| \le \varepsilon.
\end{align*}
As $\varepsilon>0$ is arbitrary, this entails that 
$\mathcal M(\Phi^2) = 0$, as required. 
 
The 
points $k = k_j\pm l/2$, $l =1, 2, \dots, (n-1)$ 
are exactly those, where the bands $\sigma_{j+l-1}$ and $\sigma_{j+l}$ 
touch. Thus the equality $\mathcal M(\Phi^2) = 0$ for these values of $k$ 
follows from \eqref{meant:eq}. This leads to \eqref{mean0:eq} again. 
\end{proof}

\section{Results}\label{sect:results}

Our main result concerns the trace defined in 
\eqref{bal:eq} with a test-function 
which satisfies the following condition. 

\begin{cond}\label{h:cond}
The function $h:[0, 1]\mapsto \mathbb C$ is piece-wise continuous, 
it is H\"older continuous at $t=0$ and $1$, and $h(0) = 0$.  
\end{cond}
 
For  a function $h$ 
satisfying Condition 
\ref{h:cond}, define the integral 
\begin{align}\label{whc}
\mathcal{W}(h) := 
\frac{1}{\pi^2}\int\limits_{0}^1\,
\frac{[h(t)-th(1)]}{t(1-t)} dt.
\end{align} 
The next theorem 
contains a Szeg\H o type formula for the operator 
$B_{\alpha, \mu}$ (see \eqref{bal:eq}), 
and it 
constitutes the main result of the paper.

\begin{theorem}\label{theorem} 
Suppose that 
$V\in {\rm C}^\infty(\mathbb R)$. 
Assume that the function $h$ satisfies Condition \ref{h:cond}. 
Then for any $\mu\in \big(\sigma(H)\big)^\circ$ 
we have the asymptotic formula
\begin{align}\label{trhb}
\tr[h(B_{\alpha,\mu})] = 
2\alpha h(1) N(\mu, H) + 
\log(\alpha)\mathcal{W}(h)+o(\log(\alpha)),\ 
\text{as}\ \alpha\to\infty.
\end{align}
If $\mu\notin \big(\sigma(H)\big)^\circ$, then
\begin{align}\label{trhb0:eq}
\tr[h(B_{\alpha,\mu})] = 
2\alpha h(1) N(\mu, H) +\mathcal O(1),\ 
\text{as}\ \alpha\to\infty.
\end{align}
Here, $N(\mu,H)$ denotes the integrated 
density of states for the operator $H$, defined in \eqref{density:eq}. 
\end{theorem}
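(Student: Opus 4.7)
The plan is to reduce to the ``balanced'' case $h(0)=h(1)=0$ by the decomposition $h(t)=h(1)t+g(t)$, so that $\tr h(B_{\alpha,\mu}) = h(1)\tr B_{\alpha,\mu} + \tr g(B_{\alpha,\mu})$. The first summand equals $h(1)\int_{-\alpha}^{\alpha} P_\mu(x,x)\,dx$, and from \eqref{proj:eq} the diagonal $P_\mu(x,x)$ is a $2\pi$-periodic function of $x$ whose mean over a period equals $N(\mu,H)$ by the definition \eqref{density:eq}. Hence $\tr B_{\alpha,\mu}=2\alpha N(\mu,H)+O(1)$, producing the leading term in both \eqref{trhb} and \eqref{trhb0:eq}. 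Everything thereafter concerns $\tr g(B_{\alpha,\mu})$ with $g(0)=g(1)=0$.

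Next I handle the gap case $\mu\notin(\sigma(H))^\circ$. Using the band decomposition and \eqref{projS:eq}, every band $S$ with $\mu\ge\max_S\Lambda$ contributes its full spectral projector $P[S]$, whose kernel integrand $\Phi(x,k)\overline{\Phi(y,k)}$ is analytic in $k$ on the closed loop $n\mathbb T$, yielding rapid off-diagonal decay of $P_\mu(x,y)$. For the remaining bands the integrand is identically zero. Thus $P_\mu$ has a kernel that decays rapidly in $|x-y|$; standard Landau--Widom-type arguments (as in Section \ref{sect:elemtrnest}) then show that $B_{\alpha,\mu}$ differs from an orthogonal projection only in the $O(1)$ trace norm, so the spectrum of $B_{\alpha,\mu}$ accumulates at $\{0,1\}$ and $\tr g(B_{\alpha,\mu})=O(1)$ whenever $g$ is piecewise continuous with H\"older vanishing at $0$ and $1$.

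In the bulk case $\mu\in(\sigma(H))^\circ$, exactly one band $S$ supplies the singular contribution. Choose $k_\mu\in(k_j,k_j+n/2)$ with $\Lambda(k_\mu)=\mu$ and split the $k$-integral in \eqref{projS:eq} into a part smooth in $k$ (contributing an $O(1)$ trace as in the gap case) and a singular part localized near $\pm k_\mu$. Using \eqref{Edef:eq} and a Taylor expansion of $\Lambda$ at $\pm k_\mu$, the singular part of $P_\mu(x,y)$ reduces, modulo trace-norm-negligible remainders (Section \ref{sect:expintkern}), to
\begin{equation*}
\Phi(x,k_\mu)\,\overline{\Phi(y,k_\mu)}\,\frac{\sin(k_\mu(x-y))}{\pi(x-y)} + \overline{\Phi(x,k_\mu)}\,\Phi(y,k_\mu)\,\frac{\sin(k_\mu(x-y))}{\pi(x-y)}.
\end{equation*}
For polynomial $g$ the trace $\tr g(B_{\alpha,\mu})$ expands into traces of products of this model kernel. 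The averaging procedure of sub-Section \ref{almperbymean:subsec} replaces the periodic factors $|\Phi(\cdot,k_\mu)|^{2m}$ and $\Phi(\cdot,k_\mu)^a\overline{\Phi(\cdot,k_\mu)}^b$ by their almost-periodic means, which by Lemma \ref{mean:lem} equal $(2\pi)^{-m}$ and $0$ respectively (since $k_\mu\ne k_j,k_j+n/2$). The potential $V$ drops out, and the polynomial version of \eqref{trhb} follows with the coefficient $\mathcal W(g)$ inherited from the free Landau--Widom formula applied to the symbol $\chi_{(-k_\mu,k_\mu)}$.

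To close this from polynomials to arbitrary $g$ satisfying Condition \ref{h:cond}, approximate $g$ by polynomials $p_n$ with $p_n(0)=p_n(1)=0$ such that $g-p_n$ is uniformly small away from $\{0,1\}$ and is bounded by the H\"older modulus at each endpoint. The Schatten--von Neumann estimates of Section \ref{sect:schattenest} -- which refine those of \cite{Sobolev2014} to the discontinuous symbol $\chi_{(-\infty,\mu)}$ -- then yield a trace-norm bound on $(g-p_n)(B_{\alpha,\mu})$ of order $o(\log\alpha)$ as $n\to\infty$, uniformly in $\alpha$, completing the closure. The hardest step, I expect, is the polynomial computation: verifying that the averaging argument actually reproduces the coefficient $\mathcal W(g)$ cleanly requires careful bookkeeping of the remainders coming from the Taylor expansion of $\Lambda$ near $\pm k_\mu$ and from the cut-offs to $(-\alpha,\alpha)$, and one must rule out interference between the $+k_\mu$ and $-k_\mu$ branches, where the vanishing $\mathcal M(\Phi^2)=0$ is crucial.
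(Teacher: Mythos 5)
Your outline captures the right skeleton -- decompose $h(t)=h(1)t+g(t)$, extract the leading term from $\tr B_{\alpha,\mu}$, reduce the bulk-case polynomial computation via almost-periodic averaging to the free Landau--Widom result, and close by approximation -- but there are two substantive gaps, and one likely error, that prevent it from being a complete proof.

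\textbf{Gap 1: the spectral-gap case needs $\mathfrak S_q$, not $\mathfrak S_1$.} You argue that rapid decay of $P_\mu(x,y)$ plus the Section~\ref{sect:elemtrnest}-type trace estimates shows $B_{\alpha,\mu}-B_{\alpha,\mu}^2$ has $O(1)$ trace norm, and you conclude $\tr g(B_{\alpha,\mu})=O(1)$ for $g$ piecewise continuous and H\"older at $0,1$. That last step fails: if $g$ is only H\"older of exponent $q<1$ at the endpoints, one needs $\|g(B_{\alpha,\mu})\|_1\ll\|A_{\alpha,\mu}\|_q^q$, and a uniform $\mathfrak S_1$-bound on $A_{\alpha,\mu}=B_{\alpha,\mu}(\mathds 1-B_{\alpha,\mu})$ does \emph{not} control $\|A_{\alpha,\mu}\|_q^q$ for $q<1$ (a sequence of $n$ eigenvalues of size $1/n$ has bounded trace but $\mathfrak S_q$-quasinorm $n^{1-q}\to\infty$). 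This is why the paper derives the quasinorm bound $\|A_{\alpha,\mu}\|_q^q\ll 1$ from the pseudodifferential Schatten estimates of Section~\ref{sect:schattenest} (Corollary~\ref{corquasinorm}), not from the elementary Section~\ref{sect:elemtrnest} machinery. Your proposal deploys Section~\ref{sect:schattenest} only at the closure step for the bulk case; it is equally essential for the gap case.

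\textbf{Gap 2: asymmetric polynomials.} The averaging argument you describe directly yields $\tr A_{\alpha,\mu}^n=\tr p_n(B_{\alpha,\mu})$ for the \emph{symmetric} basis $p_n(t)=(t(1-t))^n$, by replacing $|\Phi|^{2m}$ by $(2\pi)^{-m}$ and $\Phi^2,\overline\Phi^2$ by $0$. But a basis of $\mathfrak P_0$ also contains the asymmetric polynomials $q_n(t)=t\,p_n(t)$, and $\tr q_n(B_{\alpha,\mu})=\tr[B_{\alpha,\mu}A_{\alpha,\mu}^n]$ is not one of the traces that the averaging procedure reduces to $D_\alpha^\pm$. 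The paper handles this with a separate symmetry argument (Lemma~\ref{btoone:lem}), showing $\tr[B_{\alpha,\mu}A_{\alpha,\mu}^n]\approx\frac12\tr A_{\alpha,\mu}^n$ by exploiting invariance of $P_\mu$ under lattice translations together with the cyclicity of the trace (Lemma~\ref{auxiliarylemmaasymmetric}). This is a genuinely non-trivial ingredient -- the free-case analogue in Landau--Widom uses reflection symmetry, which is unavailable here -- and your proposal does not mention it at all. Without it you only get \eqref{trhb} for $h\in\mathfrak P_{s,0}$, which does not span $\mathfrak P_0$.

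\textbf{Likely error: the model kernel.} The displayed model kernel $\bigl(\Phi(x)\overline{\Phi(y)}+\overline{\Phi(x)}\Phi(y)\bigr)\sin(k_\mu(x-y))/(\pi(x-y))$ does not match the paper's $\Pi_\mu(x,y)$ of \eqref{leadpmu:eq}. Specialising to the free case ($\Phi(x)=(2\pi)^{-1/2}e^{i\delta x}$), the paper's $\Pi_\mu$ reduces to $\sin(\delta(x-y))/(\pi(x-y))$ (the sine kernel, as it must), whereas your expression gives $\cos(\delta(x-y))\sin(\delta(x-y))/(\pi^2(x-y))$, which is not the spectral projector of $-d^2/dx^2$. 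The correct leading kernel is the one in \eqref{leadpmu:eq}, obtained by a single integration by parts in \eqref{calP:eq}; it is what makes the factorisation through the operators $Z_\alpha^\pm$ and the subsequent averaging go through cleanly.

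The remainder of your sketch (leading term via $\tr B_{\alpha,\mu}$; averaging via Lemmas~\ref{mean:lem} and~\ref{replacefunctionbymeanhankel}; closure via Stone--Weierstrass, cutoff near $\{0,1\}$ and the $\mathfrak S_q$ bound) is in agreement with the paper's strategy, but you should note that the paper's closure is carried out in three distinct steps (differentiable, H\"older-continuous, piecewise continuous), using monotonicity of the trace in the first and third steps and the Schatten estimate only in the second.
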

  
\begin{remark}\label{lead:rem}
\begin{enumerate}
\item
The two terms in \eqref{trhb} are of different nature: 
the first one (linear in $\alpha$) 
depends on both the potential $V$ and the parameter $\mu$, 
but the second one ($\log$-term) 
is independent of $V$ or $\mu$, as long as $\mu$ remains an interior point of the 
spectrum $\sigma(H)$. 

\item 
The ways in which one finds the main term and the $\log$-term in \eqref{trhb} 
are completely different. To explain this, 
represent the function $h$ as the sum
\begin{align}\label{decomph}
h(t) = t h(1) + h_1(t),
\end{align}
so that the function $h_1$ satisfies Condition \ref{h:cond} and, in addition, 
$h_1(1) = 0$. 
The function $t h(1)$ 
is responsible for the first term in \eqref{trhb}. 
Indeed, 
according to \eqref{proj:eq},
\begin{align*}
\|B_{\alpha, \mu}\|_{\mathfrak S_1} 
= \tr B_{\alpha, \mu} = \sum\limits_j
\int\limits_{-\alpha}^{\alpha}\, \int\limits_{k\in \mathbb T: \lambda_j(k)<\mu} 
\!\!\! |\phi_j(x, k)|^2\, dk dx.
\end{align*}
Assume for simplicity that $\alpha$ is a multiple of $2\pi$.  
Since the $\phi_j$'s 
are normalized on $(0, 2\pi)$, by the definition \eqref{density:eq},  
we have 
\begin{align*}
\tr B_{\alpha, \mu} = 
\frac{\alpha}{\pi}\,\, \sum_j 
\int\limits_{k\in \mathbb T: \lambda_j(k)<\mu} \mkern-18mu
dk = 2\alpha N(\mu, H).
\end{align*} 
If $\alpha$ is not a multiple of $2\pi$,
then one easily checks, using the monotonicity of the trace in $\alpha$, that 
\begin{align}\label{balphabound:eq}
4\pi \left\lfloor \frac{\alpha}{2\pi}\right\rfloor
N(\mu, H)\le \tr B_{\alpha, \mu}\le 
4\pi \left\lceil \frac{\alpha}{2\pi}\right\rceil N(\mu, H),\ \forall \alpha >1.
\end{align}
Consequently, we conclude that
\begin{align*}
\tr B_{\alpha, \mu} = 2\alpha N(\mu, H) + \mathcal O(1),\ \alpha\to\infty.
\end{align*}
The study of $\tr h_1(B_{\alpha, \mu})$ is much more difficult, 
and the rest of the paper is focused on this task. 

\item
We point out that the function $h$ in Theorem \ref{theorem} 
is not required to be smooth, not even at the endpoints $t = 0, 1$. 
If we do assume that $h$ is differentiable at the endpoints, then 
the conditions on the potential $V$ can be reduced to 
$V\in {\rm L}^2_{\textup{\tiny loc}}(\mathbb R)$, as in 
Section \ref{prelim:sect}. This can be observed 
at the first step of the proof of Theorem \ref{theorem}, 
see Subsection \ref{closure:subsect}. The increased smoothness 
of $V$, i.e. the condition $V\in {\rm C}^\infty(\mathbb R)$ 
is required to handle the functions $h$ that are H\"older-continuous 
at $t = 0, 1$. To be precise, a finite smoothness of $V$,
depending on the H\"older exponent, 
would have been sufficient, but we do not go into 
these details to avoid excessive technicalities. 
\end{enumerate}
\end{remark}

\section{An Expansion of the Integral Kernel of the Spectral Projection}\label{sect:expintkern}
 
Let us temporarily assume that 
$\mu\in S$ where $S$ is a ``genuine" band of 
$\sigma(H)$ defined in \eqref{properband:eq}. Inspecting the formula \eqref{projS:eq}, 
we observe that the set $\{k: \Lambda(k)<\mu\}$ 
is the interval $(2k_j-\delta, \delta)$ where 
$\delta=\delta(\mu)\in [k_j, k_j+n/2]$ is the uniquely defined value such that 
$\Lambda(\delta) = \mu$. The following lemma provides a 
convenient expansion of $P_\mu[S]$ in powers of $|x-y|^{-1}$.

\begin{lemma}\label{exppmu:lem}
Let $\mu\in S$, where 
$S$ is the band defined in \eqref{properband:eq},  
and let $\delta=\delta(\mu)$ be as defined above. 
Then for all $x$, $y\in\mathbb{R}$ we have 
\begin{align}\label{kernel_mod0:eq}
P_\mu[S](x,y) 
= \Pi_\mu(x, y) + R_\mu[S](x,y),
\end{align}
where 
\begin{align}\label{leadpmu:eq}
\Pi_\mu (x, y)= \frac{\Phi(x, \delta)\conjugate{\Phi(y, \delta)} - 
\conjugate{\Phi(x, \delta)}\Phi(y, \delta)}{i(x-y)},
\end{align}
and
\begin{align}\label{boundrst}
R_\mu[S](x,y)=\mathcal{O}\big( (1+|x-y|^2)^{-1}\big),\ \forall 
x, y\in\mathbb R.
\end{align}
Moreover, $R_\mu[S](x,y)$, $P_\mu[S](x, y)$ 
and $\Pi_\mu(x, y)$ are continuous functions of 
$x, y\in\mathbb R$, and 
\begin{align}\label{pipS:eq}
|P_\mu[S](x, y)| + |\Pi_\mu (x, y)| = 
\mathcal{O}\big( (1+|x-y|)^{-1}\big),\ \forall 
x, y\in\mathbb R.
\end{align}
If $\mu\notin S^\circ$, then 
$P_\mu[S](x, y)$ 
is a continuous function of $x, y\in\mathbb R$, and it satisfies 
the bound 
\begin{align}\label{pmu_edgeS:eq}
P_\mu[S](x, y) = 
\mathcal{O}\big( (1+|x-y|^2)^{-1}\big),\ \forall 
x, y\in\mathbb R.
\end{align}
\end{lemma}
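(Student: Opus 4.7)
The plan is to rewrite \eqref{projS:eq} as an oscillatory integral in the quasi-momentum $k$ and to integrate by parts twice. Propositions \ref{basic:prop} and \ref{basic1:prop} guarantee that $\Lambda$ is strictly monotone on $[k_j, k_j+n/2]$ and symmetric about $k_j$, so $\{k\in n\mathbb T : \Lambda(k) < \mu\} = (2k_j-\delta, \delta)$. Pulling out the exponential factor via \eqref{Edef:eq},
\begin{align*}
P_\mu[S](x,y) = \int\limits_{2k_j-\delta}^{\delta} e^{ik(x-y)}\, g(x,y,k)\, dk, \qquad g(x,y,k) := E(x,k)\,\conjugate{E(y,k)}.
\end{align*}
Since $E(\ \cdot\ ,k)$ is $2\pi$-periodic in $x$ and analytic in $k$ with values in ${\rm H}^2(0,2\pi)\hookrightarrow {\rm C}[0,2\pi]$, the function $g$ together with all its $k$-derivatives is uniformly bounded in $(x,y,k)$. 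For $x\neq y$ this permits integration by parts against $e^{ik(x-y)}$.

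After one integration by parts the boundary term at $k=\delta$ is $\Phi(x,\delta)\conjugate{\Phi(y,\delta)}/(i(x-y))$, while at $k=2k_j-\delta$, invoking the reflection identity $\Phi(\ \cdot\ ,2k_j-k)=\conjugate{\Phi(\ \cdot\ ,k)}$ from \eqref{symPhi:eq}, the boundary term equals $-\conjugate{\Phi(x,\delta)}\Phi(y,\delta)/(i(x-y))$; their sum is precisely $\Pi_\mu(x,y)$ of \eqref{leadpmu:eq}. A second integration by parts on the remaining bulk integral gains another factor $(i(x-y))^{-1}$ and produces $|R_\mu[S](x,y)|\le C(x-y)^{-2}$ for $|x-y|\ge 1$, with $C$ independent of $(x,y)$. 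To control $R_\mu[S]$ on $|x-y|\le 1$ I would note that $P_\mu[S]$ is jointly continuous as the integral of a jointly continuous integrand, while the continuity of $\Pi_\mu$ on the diagonal follows by Taylor expansion: its numerator is $C^1$ in $(x,y)$ and vanishes along $x=y$, so the $1/(x-y)$ singularity is removable. This yields \eqref{boundrst}. The bound \eqref{pipS:eq} follows by applying the single integration by parts directly to $P_\mu[S]$ and combining it with the uniform bound on $\Pi_\mu$.

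For $\mu \notin S^\circ$ the argument is a simplification. If $\mu \le \mu_j$ then $\{\Lambda<\mu\}$ is empty and $P_\mu[S]\equiv 0$. If $\mu \ge \nu_{j+n-1}$, then up to a null set the integration region is the full circle $n\mathbb T$ and
\begin{align*}
P_\mu[S](x,y) = \int\limits_{k_j-n/2}^{k_j+n/2} e^{ik(x-y)}\, g(x,y,k)\, dk.
\end{align*}
The key observation is that $F(x,y,k):=\Phi(x,k)\conjugate{\Phi(y,k)} = e^{ik(x-y)} g(x,y,k)$ is $n$-periodic in $k$ (since $\Phi(\ \cdot\ ,k)$ is), as are $\partial_k F$ and $\partial_k^2 F$; hence the boundary contributions from each integration by parts cancel and two integrations by parts yield the decay $O((x-y)^{-2})$, proving \eqref{pmu_edgeS:eq}. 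The single remaining edge $\mu = \nu_{j+n-1}$ falls into this case up to a null set; it can also be derived from \eqref{kernel_mod0:eq}, since $\Phi(\ \cdot\ , k_j+n/2)$ is real-valued by Proposition \ref{basic1:prop}, forcing $\Pi_\mu\equiv 0$.

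The hard part will be the careful bookkeeping in the first integration by parts: one must track the oscillatory phases at both endpoints and apply \eqref{symPhi:eq} correctly so that the two boundary terms combine exactly into $\Pi_\mu$ without leaving residual $k$-dependent phase factors.
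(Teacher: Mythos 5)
Your proof is correct and follows essentially the same route as the paper: rewrite $P_\mu[S]$ via \eqref{Edef:eq} as an oscillatory integral in $k$, integrate by parts twice, identify the boundary terms with $\Pi_\mu$ using the reflection identity \eqref{symPhi:eq}, use smoothness of $E$ in $(x,k)$ for the near-diagonal and continuity statements, and handle $\mu\notin S^\circ$ by observing $\Phi(\cdot, k_j+n/2)$ is real so $\Pi_\mu\equiv 0$. The additional periodicity-cancellation argument you sketch for $\mu\ge\nu_{j+n-1}$ is a valid alternative not used in the paper, but it leads to the same conclusion.
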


\begin{proof} 
Let us deduce the bound \eqref{pmu_edgeS:eq} first. 
Observe that if $\mu\notin S^\circ$, 
then either $P_\mu[S] = 0$ (if $\mu$ is below $S^\circ$), 
or $P_\mu[S] = P_{\mu_0}[S]$ where $\mu_0 = \nu_{j+n-1}$, i.e. 
$\delta(\mu_0) = k_j+n/2$ 
(if $S$ is bounded). 
In the first case the bound \eqref{pmu_edgeS:eq} is trivial. In the second case 
the function $\Phi(\delta)$ is real-valued, so that $\Pi_{\mu_0}(x, y) = 0$, 
and hence the bound \eqref{pmu_edgeS:eq} follows from \eqref{kernel_mod0:eq} and \eqref{boundrst}. 
  
It remains to prove the continuity and the bounds 
\eqref{boundrst} and \eqref{pipS:eq} for $\mu\in S$.   
Note that the kernel $P_\mu[S]$ (cf. \eqref{projS:eq}) is bounded uniformly 
in $x, y$, as $\Phi(x, k)$ is uniformly bounded
due to \eqref{Edef:eq}. 
It is also continuous in $x$, $y$. 
Furthermore, since $E(x, k)$ are continuous and periodic in $x$, the kernel 
\eqref{leadpmu:eq} is continuous and bounded 
by $|x-y|^{-1}$ 
for all $x, y: |x-y|\ge 1$. 
Due to the continuity of the derivative $\Phi_x$, 
the kernel \eqref{leadpmu:eq} is continuous and uniformly bounded 
for $|x-y|< 1$. 
As a consequence, 
$\Pi_\mu(x, y)$ satisfies \eqref{pipS:eq}, and 
the remainder $R_\mu[S](x, y)$ is continuous and uniformly bounded. 
Thus it remains to prove the bounds \eqref{boundrst} and \eqref{pipS:eq} 
for $P_\mu(x, y)$ with $|x-y|\ge 1$.
 
Using \eqref{Edef:eq}, we rewrite 
\begin{align}\label{calP:eq}
P_\mu[S](x,y)
=\int\limits_{2k_j-\delta}^{\delta}  \, e^{i k(x-y)} E(x, k)
\conjugate{E(y, k)} dk,
\end{align}
and integrate by parts to arrive at
\begin{align*}
P_\mu[S](x,y) 
= &\frac{e^{i\delta(x-y)}}{i(x-y)}
E(x, \delta) \conjugate{ E (y, \delta)}
-\frac{e^{i(2k_j-\delta)(x-y)}}{i(x-y)}
E(x, 2k_j-\delta) \conjugate{ E(y, 2k_j-\delta)}
+ R_\mu[S](x,y)
\end{align*}
with
\begin{align*}
R_\mu[S](x,y) 
= -\int\limits_{2k_j-\delta}^\delta \, 
\frac{e^{ik(x-y)}}{i(x-y)}\partial_k
\big( E(x, k) E(y, k) 
\big)dk.
\end{align*}
Due to \eqref{Edef:eq} and
the symmetry property \eqref{symPhi:eq} 
one obtains the representation \eqref{kernel_mod0:eq}. 
Another integration by parts for $R_\mu[S]$ gives 
\begin{align}
R_\mu[S](x, y)& = 
\left.\frac{e^{i k(x-y)}\partial_k 
\big(E(x, k) E(y, k) 
\big)}
{(x-y)^2}\right|_{2k_j-\delta}^\delta 
- \int\limits_{2k_j-\delta}^\delta \, 
\frac{e^{i k(x-y)}}{(x-y)^2}\partial_k^2
\big( E(x, k) E(y, k) 
\big)dk.
\end{align}
Hence, estimate \eqref{boundrst} 
follows from the fact that the functions $E$, $\partial_k E$, 
and $\partial_k^2 E$ are uniformly bounded. 
\end{proof}

Now Lemma \ref{exppmu:lem} may be used for each ``genuine" 
spectral band separately to get the 
corresponding expansion of the kernel $P_\mu(x, y)$.

\begin{lemma}\label{ptopi:lem} 
Let $\mu\in S$, where 
$S$ is the band defined in \eqref{properband:eq},  
and let $\delta=\delta(\mu)$ be as in Lemma \ref{exppmu:lem}. 
Then for all $x$, $y\in\mathbb{R}$ we have 
\begin{align}\label{ansatz:eq}
P_\mu(x,y)=\Pi_\mu(x,y)+R_\mu(x,y),
\end{align}
where $\Pi_\mu$ is as defined in \eqref{leadpmu:eq}, and 
\begin{align}\label{rmubound:eq}
R_\mu(x,y)=\mathcal{O}\big( (1+|x-y|^2)^{-1}\big),\ \forall 
x, y\in\mathbb R.
\end{align}
Moreover, $R_\mu(x,y)$, $\Pi_\mu(x, y)$ 
and $P_\mu(x, y)$ are continuous functions of 
$x, y\in\mathbb R$, and 
\begin{align}\label{pip:eq}
|P_\mu(x, y)| + |\Pi_\mu(x, y)| = 
\mathcal{O}\big( (1+|x-y|)^{-1}\big),\ \forall 
x, y\in\mathbb R.
\end{align}
If $\mu\notin \big(\sigma(H)\big)^\circ$, 
then $P_\mu(x, y)$ is a continuous function 
of $x, y\in\mathbb R$, and it satisfies the bound 
\begin{align}\label{pmu_edge:eq}
P_\mu(x, y) = 
\mathcal{O}\big( (1+|x-y|^2)^{-1}\big),\ \forall 
x, y\in\mathbb R.
\end{align}
\end{lemma}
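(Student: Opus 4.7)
The plan is to derive Lemma \ref{ptopi:lem} by decomposing $P_\mu$ into contributions from the individual ``genuine'' bands $S$ constructed in \eqref{properband:eq} and then invoking Lemma \ref{exppmu:lem} for each piece. Since $\lambda_j(k) \to \infty$ uniformly in $k \in \mathbb{T}$ as $j \to \infty$ (a standard consequence of the min-max characterisation of $H(k)$), for any fixed $\mu$ only finitely many genuine bands $S$ satisfy $S \cap (-\infty, \mu) \ne \varnothing$. Writing $P_\mu = \sum_{S} P_\mu[S]$ therefore yields a finite sum over the relevant bands, and every analytic issue is reduced to a per-band question.

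First I would treat the case $\mu \in \big(\sigma(H)\big)^\circ$. Exactly one contributing band, call it $S^*$, contains $\mu$ in its interior, and Lemma \ref{exppmu:lem} provides the decomposition $P_\mu[S^*] = \Pi_\mu + R_\mu[S^*]$ together with the bounds \eqref{boundrst} and \eqref{pipS:eq}. Every other contributing band $S \ne S^*$ lies entirely below $\mu$ (or touches $\mu$ only at its right endpoint), so that $P_\mu[S] = P[S]$; the second clause of Lemma \ref{exppmu:lem} then applies and gives $P_\mu[S](x,y) = \mathcal{O}\big((1+|x-y|^2)^{-1}\big)$, along with continuity. Setting
\[
R_\mu(x,y) := R_\mu[S^*](x,y) + \sum_{S \ne S^*} P_\mu[S](x,y),
\]
the ansatz \eqref{ansatz:eq} holds, the remainder bound \eqref{rmubound:eq} follows by adding finitely many terms of the same order, and \eqref{pip:eq} is inherited from \eqref{pipS:eq} for $\Pi_\mu$ and from $|P_\mu| \le |\Pi_\mu| + |R_\mu|$ for $P_\mu$; continuity holds term-by-term.

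For $\mu \notin \big(\sigma(H)\big)^\circ$, no genuine band contains $\mu$ in its interior, so every contributing $S$ falls under the second clause of Lemma \ref{exppmu:lem}; summing the corresponding estimates \eqref{pmu_edgeS:eq} over the finitely many relevant bands yields \eqref{pmu_edge:eq} and continuity of $P_\mu$. The only genuine subtlety lies in the bookkeeping at band edges: in particular when $\mu = \mu_j$ one has $\delta(\mu_j) = k_j$ and the integration range in \eqref{projS:eq} degenerates so that $P_\mu[S^*] = 0$, which is why this boundary situation belongs to the ``off-interior'' regime rather than the main-term regime. Once this cataloguing is done carefully, the argument reduces transparently to a finite sum of single-band estimates and no new analytical input is required beyond Lemma \ref{exppmu:lem} and the asymptotics $\lambda_j(k) \to \infty$.
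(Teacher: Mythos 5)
Your proof is correct and follows essentially the same route as the paper: decompose $P_\mu = \sum_{S} P_\mu[S]$ into finitely many per-band contributions, use the first clause of Lemma \ref{exppmu:lem} for the band containing $\mu$ (which supplies $\Pi_\mu$), and the second clause for every other contributing band, then add the error bounds. Your additional remarks (finiteness of the sum via $\lambda_j(k)\to\infty$, and the band-edge bookkeeping where $\delta(\mu_j)=k_j$ forces $\Pi_\mu=0$ so the boundary case is subsumed in \eqref{pmu_edge:eq}) are consistent with and merely make explicit what the paper leaves implicit.
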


\begin{proof} 
The continuity of the projection kernel $P_\mu(x, y)$ 
follows immediately from Lemma \ref{exppmu:lem}. If $\mu\notin \big(\sigma(H)\big)^\circ$, then 
\eqref{pmu_edge:eq} follows directly from \eqref{pmu_edgeS:eq}. 
 
Assume now that $\mu \in S$. 
Let $S_1, S_2, \dots,  S_N$, be ``genuine" spectral bands lying below the band $S$. 
%
%
Using Lemma \ref{exppmu:lem}, we can write  
  \begin{align*}
  P_\mu(x, y) = &\ \sum\limits_{l=1}^N 
  P_\mu[S_l](x, y) + P_\mu[S](x, y)\\
= &\ \Pi_\mu(x, y) + R_\mu(x, y), 
\end{align*}   
where
\begin{align*}
R_\mu(x, y) = \sum\limits_{l=1}^N P_\mu[S_l](x, y) + R_\mu[S](x, y).
\end{align*}
By Lemma \ref{exppmu:lem}, the kernel $R_\mu[S]$ and 
each term $P_\mu[S_l]$, $l = 1, 2, \dots, N$ 
satisfy \eqref{rmubound:eq}, 
whence \eqref{ansatz:eq}. 
The bound \eqref{pip:eq} for the kernel $P_\mu[S](x, y)$ 
follows from \eqref{pipS:eq}. 
\end{proof}

\section{Elementary Trace Norm Estimates}\label{sect:elemtrnest}

Throughout the proof of Theorem \ref{theorem} we need various trace class bounds for operators involved. 
It is interesting that for most of our needs we can get away with 
rather elementary bounds, as in \cite{LandauWidom}. 
This fact is due to the  specific 
form of the operators studied. 
As we see in the next few pages, many of the technical issues that we come across, 
boil down to trace class bounds for the operators of the form  
\begin{align}\label{IJK:eq}
\chi_I P_\mu \chi_J P_\mu \chi_K,
\end{align}
where $I, J, K\subset \mathbb R$ 
are some intervals that may depend on the parameter $\alpha>0$. 


\subsection{Schatten-von Neumann Classes}
Throughout this paper, we make use of the standard 
notation for the Schatten - von Neumann classes of operators 
$\mathfrak{S}_q, q>0$ in a Hilbert space, 
see e.g. \cite{BS}, \cite{Simon}. 
The class $\mathfrak{S}_q$ consists 
of all compact operators $A$  
whose singular values $(s_k(A))_{k\in\mathbb{N}}$ are $q$-summable, i.e.
\begin{align*}
\sum\limits_{k\in\mathbb{N}}s_k(A)^q <\infty.
\end{align*}
For $A\in \mathfrak{S}_q$ we denote by 
\begin{align*}
\|A\|_q:=
\bigg(\sum\limits_{k\in\mathbb{N}}s_k(A)^q\bigg)^{\frac{1}{q}},
\end{align*}
the norm (for $q\ge 1$) or quasi-norm (for $q\in (0, 1)$) 
on $\mathfrak S_q$. 
Note the ``H\"older's inequality"
\begin{align*}
\|AB\|_1\leq \|A\|_p \|B\|_q,
\quad\frac{1}{p} + \frac{1}{q} = 1,
\end{align*}
which holds for any $A\in\mathfrak S_p$ and $B\in \mathfrak S_q$. 
 While in this section we limit ourselves  
to estimates in the trace class $\mathfrak{S}_1$, 
Section \ref{sect:schattenest} treats 
operators in the classes $\mathfrak{S}_q$ for $q\in (0,1]$. 

The next elementary 
trace class estimate (see \cite[formula (12)]{LandauWidom}) 
plays a central role in our paper. We 
provide a proof for the reader's convenience.

\begin{lemma}\label{integralrankoneoperators}
Let $M\subset \mathbb{R}$ be a Borel-measurable set. Consider (weakly) measurable mappings $f, g:M\mapsto {\rm L}^2(\mathbb{R})$,
such that 
\begin{align*}
\int\limits_M \, \|f(z)\|_{{\rm L}^2} \|g(z)\|_{{\rm L}^2}\, dz <\infty.
\end{align*} 
Then the operator $A:{\rm L}^2(\mathbb{R})\to {\rm L}^2(\mathbb{R})$ 
which is defined via the form
\begin{align*}
\langle u,Av \rangle_{{\rm L}^2}:=\int\limits_M 
\langle u, f(z) \rangle_{{\rm L}^2} 
\langle g(z), v\rangle_{{\rm L}^2}\, dz, \ u,v\in {\rm L}^2(\mathbb{R}),
\end{align*} 
is of trace class with
\begin{align*}
\|A\|_1\leq
\int\limits_M \, \|f(z)\|_{{\rm L}^2} \|g(z)\|_{{\rm L}^2}\, dz
\end{align*}
\end{lemma}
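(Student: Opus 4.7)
The plan is to identify $A$ as the weak integral $\int_M f(z)\otimes g(z)\,dz$ of rank-one operators, and then bound its trace norm by dualizing against orthonormal systems.

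First I would verify that the sesquilinear form on the right-hand side is bounded by $\|u\|\|v\|\bigl(\int_M \|f(z)\|\|g(z)\|\,dz\bigr)$, using Cauchy-Schwarz inside the integral. This produces a bounded operator $A$ on $\mathrm{L}^2(\mathbb R)$ by Riesz representation. Measurability of the integrand $z\mapsto \langle u,f(z)\rangle\overline{\langle v,g(z)\rangle}$ is immediate from the (weak) measurability of $f$ and $g$, so the definition makes sense.

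The key step is the trace-norm bound. I would use the characterization
\begin{equation*}
\|A\|_1 = \sup \sum_{k} |\langle A e_k, h_k\rangle|,
\end{equation*}
where the supremum runs over all finite orthonormal systems $\{e_k\}$, $\{h_k\}$ in $\mathrm{L}^2(\mathbb R)$. Plugging in the definition of $A$ and exchanging sum and integral (legitimate since the sum is finite), I get
\begin{equation*}
\sum_{k} |\langle A e_k, h_k\rangle| \le \int_M \sum_{k}|\langle h_k, f(z)\rangle|\,|\langle g(z), e_k\rangle|\,dz.
\end{equation*}
Applying Cauchy--Schwarz in $k$ and then Bessel's inequality for both orthonormal systems bounds the inner sum by $\|f(z)\|_{\mathrm{L}^2}\|g(z)\|_{\mathrm{L}^2}$, giving the claimed estimate after taking the supremum.

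The argument is essentially routine; the only mild obstacle is ensuring the interchange of summation and integration, which is handled by the finiteness of the orthonormal system in the variational formula (or, alternatively, by dominated convergence with the integrable majorant $\|f(z)\|\|g(z)\|$). No further structural input about $f$ or $g$ is needed.
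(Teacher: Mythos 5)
Your proposal is correct and follows essentially the same approach as the paper's proof: bound $\sum_k |\langle A e_k, h_k\rangle|$ using Cauchy--Schwarz in $k$ together with Bessel/Parseval over orthonormal systems, then take the supremum to recover $\|A\|_1$. The only cosmetic difference is that you work with finite orthonormal systems and Bessel's inequality, whereas the paper uses full orthonormal bases and Parseval's identity (with Tonelli implicitly justifying the sum--integral interchange); both yield the identical estimate.
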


\begin{proof}
Let $(d_n)_n$ and $(e_n)_n$ be orthonormal bases (ONB's) of 
${\rm L}^2(\mathbb{R})$ 
and denote by $\langle \cdot, \cdot \rangle$ and $\|\, \cdot\, \|$  
the scalar product and the norm respectively, 
on ${\rm L}^2(\mathbb{R})$. Then we have
\begin{align*}
\sum\limits_n 
\big|\langle d_n,Ae_n \rangle\big| 
&\leq \sum\limits_n \int_M  
\big|\langle d_n,f(z)\rangle\langle g(z), e_n\rangle\big|  \, dz\nonumber\\
&=\int_M 
\sum\limits_n\big|\langle d_n,f(z)\rangle\langle g(z), e_n\rangle\big| \, dz.
\end{align*}
The Cauchy-Schwartz inequality and Parseval's identity yield
\begin{align*}
\sum\limits_n\big|\langle d_n,f(z)\rangle
\langle g(z), e_n\rangle\big| 
\leq &\ 
\Big(\sum\limits_n \big|\langle d_n,f(z)\rangle\big|^2\Big)^{\frac{1}{2}}
\Big(\sum\limits_n \big|\langle g(z),e_n\rangle\big|^2\Big)^{\frac{1}{2}}
\\[0.2cm]
= &\ \|f(z)\|\, \|g(z)\|.
\end{align*}
This implies that 
\begin{align*}
\sum\limits_n 
\big|\langle d_n,Ae_n \rangle\big| 
\le \int\limits_M \, \|f(z)\|\, \|g(z)\|\, dz.
\end{align*}
The supremum of the left-hand side 
over all ONB's coincides with the trace norm, whence the claimed estimate. 
\end{proof}

Equipped with these basic trace norm estimates, we can 
start now our investigation of the operator \eqref{IJK:eq}.

\subsection{Replacing the  
Spectral Projection by its Approximation}
\label{sectionfirstorderapprox}

Let us recall the following general notation. 
If $f$, $g$ are real-valued functions we shall write 
$f\ll g$ if and only if $|f|\leq C |g|$ for some 
constant $C\geq 0$ which might depend on the potential 
$V$ but does not depend on the dilation parameter $\alpha$.  
Let $\Pi_\mu$ be as defined in Lemma \ref{exppmu:lem}.

\begin{lemma}\label{ptomu:lem}
Let $I$, $J$, $K\subset\mathbb{R}$ 
be intervals such that 
$I\cap J=\varnothing$ and $K\cap J = \varnothing$. 
Then we have
\begin{align}\label{ptomu:eq}
\big\|\chi_I P_\mu\chi_JP_\mu\chi_K 
- \chi_I\Pi_\mu\chi_J \Pi_\mu\chi_K\big\|_1\ll 1,
\end{align}
where the integral kernel of $\Pi_\mu$ is defined in \eqref{leadpmu:eq}. 
\end{lemma}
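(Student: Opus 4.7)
The plan is to decompose $P_\mu = \Pi_\mu + R_\mu$ using Lemma \ref{ptopi:lem}, so that the cross-terms cancel and
\begin{align*}
P_\mu \chi_J P_\mu - \Pi_\mu \chi_J \Pi_\mu = R_\mu \chi_J P_\mu + \Pi_\mu \chi_J R_\mu.
\end{align*}
Multiplying by $\chi_I$ on the left and $\chi_K$ on the right reduces \eqref{ptomu:eq} to the two separate estimates
\begin{align*}
\|\chi_I R_\mu \chi_J P_\mu \chi_K\|_1 \ll 1 \quad \text{and} \quad \|\chi_I \Pi_\mu \chi_J R_\mu \chi_K\|_1 \ll 1.
\end{align*}
Both bounds will be deduced from Lemma \ref{integralrankoneoperators} applied with $M = J$; the crucial ingredient is the improved quadratic decay $R_\mu(x,y) = \mathcal{O}((1+|x-y|^2)^{-1})$, combined with the hypothesis that $I$ and $K$ are intervals disjoint from the interval $J$, hence each lies on one side of every $y \in J$.

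For the first operator I would take $f(y)(x) = \chi_I(x) R_\mu(x,y)$ and $g(y)(x') = \conjugate{P_\mu(y,x')}\chi_K(x')$. The decay of $R_\mu$ yields
\begin{align*}
\|f(y)\|_{{\rm L}^2}^2 \ll \int_I \frac{dx}{(1+|x-y|^2)^2} \ll (1+\dist(y,I))^{-3},
\end{align*}
while the projection identity $\int_{\mathbb R} |P_\mu(y,x')|^2\,dx' = P_\mu(y,y)$ together with the uniform boundedness of the diagonal (from \eqref{pip:eq} and continuity) gives $\|g(y)\|_{{\rm L}^2}^2 \ll 1$. Lemma \ref{integralrankoneoperators} then produces
\begin{align*}
\|\chi_I R_\mu \chi_J P_\mu \chi_K\|_1 \ll \int_J (1+\dist(y,I))^{-3/2}\,dy \ll 1,
\end{align*}
because $\dist(y,I) = |y - a|$ for the endpoint $a$ of $I$ closest to $J$, and $\int_0^\infty (1+u)^{-3/2}\,du = 2$.

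For the second operator the mirror choice $f(y)(x) = \chi_I(x)\Pi_\mu(x,y)$, $g(y)(x') = \conjugate{R_\mu(y,x')}\chi_K(x')$ produces the asymmetric estimates $\|f(y)\|_{{\rm L}^2}^2 \ll (1+\dist(y,I))^{-1}$ (from the weaker $|x-y|^{-1}$ decay of $\Pi_\mu$) and $\|g(y)\|_{{\rm L}^2}^2 \ll (1+\dist(y,K))^{-3}$. The trace bound reduces to
\begin{align*}
\|\chi_I \Pi_\mu \chi_J R_\mu \chi_K\|_1 \ll \int_J (1+\dist(y,I))^{-1/2}(1+\dist(y,K))^{-3/2}\,dy,
\end{align*}
and the main obstacle is to show that this integral is $\mathcal O(1)$ uniformly in the geometry and sizes of $I, J, K$. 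A short case analysis resolves it: if $I$ and $K$ lie on the same side of $J$, both distances grow linearly with the position of $y$ inside $J$, so the integrand is dominated by $(1+|y-c|)^{-2}$ for a fixed endpoint $c$, which integrates to at most $1$; if they lie on opposite sides, writing $J=[b,c]$ one bounds the integral by $\int_0^L (1+u)^{-1/2}(1+L-u)^{-3/2}\,du$ with $L=c-b$, and splitting at $u = L/2$ while estimating one factor by its maximum on each half yields a uniform constant bound, completing the proof.
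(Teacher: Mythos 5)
Your proposal is correct and follows essentially the same strategy as the paper: decompose $P_\mu=\Pi_\mu+R_\mu$ via Lemma~\ref{ptopi:lem}, isolate the two cross terms, and estimate each through Lemma~\ref{integralrankoneoperators} together with the decay bounds \eqref{rmubound:eq} and \eqref{pip:eq}. There are two small but genuine deviations worth noting. For $\chi_I R_\mu \chi_J P_\mu \chi_K$ you exploit the projection identity $\int_{\mathbb R}|P_\mu(y,x')|^2\,dx'=P_\mu(y,y)$ to get a uniform bound $\|g(y)\|\ll 1$; the paper instead uses the linear decay \eqref{pip:eq} to obtain $\|g(y)\|^2\ll(1+\dist(y,K))^{-1}$, so your version sidesteps the $K$-dependence entirely and produces the cleaner estimate $\int_J(1+\dist(y,I))^{-3/2}\,dy\ll 1$. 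For $\chi_I\Pi_\mu\chi_J R_\mu\chi_K$ that shortcut is unavailable (since $\Pi_\mu$ is not a projection), and you resort to a case analysis (same side vs.\ opposite sides) to control
\begin{align*}
\int_J(1+\dist(y,I))^{-1/2}(1+\dist(y,K))^{-3/2}\,dy.
\end{align*}
The paper handles the analogous mixed-exponent integral more directly via the elementary inequality $a^{3/2}b^{1/2}\le a^2+b^2$ (valid here since $a=(1+\dist(y,I))^{-1}\le 1$ and $b=(1+\dist(y,K))^{-1}\le 1$), reducing it to $\int_J\big[(1+\dist(y,I))^{-2}+(1+\dist(y,K))^{-2}\big]\,dy\ll 1$ with no case distinction. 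Both routes are sound; the paper's is a one-liner, while yours trades a slicker first term for a slightly heavier second.
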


\begin{proof}
With the notation of Lemma \ref{ptopi:lem} we may write
\begin{align*}
\chi_IP_\mu\chi_JP_\mu\chi_K 
= \chi_I\Pi_\mu \chi_J \Pi_\mu\chi_K 
+ \chi_I \Pi_\mu \chi_J R_\mu\chi_K + \chi_IR_\mu\chi_J P_\mu\chi_K.
\end{align*}
Let us then estimate the trace norm of the 
operator $\chi_IR_\mu\chi_JP_\mu\chi_K$, which has the integral kernel
\begin{align*}
 \chi_I(x) \chi_K(y)\int\limits_J R_\mu(x,z) P_\mu(z,y)\, dz.
\end{align*}
We apply Lemma \ref{integralrankoneoperators} with
\begin{align*}
f(x, z) = \chi_I(x)R_\mu(x,z),\ g(y, z)=\chi_K(y)
\conjugate{P_\mu(z,y)}=\chi_K(y)P_\mu(y,z),
\end{align*}
leading to
\begin{align*}
\|\chi_IR_\mu\chi_JP_\mu\chi_K\|_1
\le 
\int\limits_J \|R_\mu(\cdot,z)\|_{{\rm L}^2(I)}\|P_\mu(\cdot, z)\|_{{\rm L}^2(K)}\, dz.
\end{align*}
Thus estimates \eqref{rmubound:eq} and \eqref{pip:eq} yield
\begin{align*}
\big\|\chi_IR_\mu\chi_JP_\mu\chi_K\|_1&
\ll \int\limits_J \biggl[\int\limits_I  (1+|x-z|)^{-4}\, dx\biggr]^{\frac{1}{2}}
\biggl[\int\limits_K (1+|z-y|)^{-2}\, dy\biggl]^{\frac{1}{2}}\, dz\nonumber\\
&\ll \int\limits_J 
\big(1+\dist(z,I)\big)^{-{\frac{3}{2}}}
\big(1+\dist(z,K)\big)^{-\frac{1}{2}}\, dz 
\nonumber\\
&\ll \int\limits_J \big[\big(1+\dist(z,I)\big)^{-2}
+ \big(1+\dist(z,K)\big)^{-2}\big] \, dz
\ll 1.
\end{align*}
The operator $\chi_I \Pi_\mu\chi_JR_\mu\chi_K$ satisfies the same bound. 
Hence, the claim follows.
\end{proof} 

\subsection{Uniform Trace Norm Bounds}

Under particular assumptions on the intervals $I$, $J$ and $K$ 
the operator 
\eqref{IJK:eq} 
 is of trace class with uniformly bounded trace norm. 
 We list some of these conditions in the following proposition.

\begin{proposition}\label{proposition}
Let $I$, $J$, $K\subset \mathbb{R}$ be intervals such that one of the following condition holds:
\begin{enumerate}[label=(\roman*)]
\item \label{Ibounded} $|J|\ll 1$,
\item \label{lengthofIlowerorder}
%
%
Either 
\begin{enumerate}
\item
$|J|\ll \max\{\dist(I,J),\ \dist(J,K)\}$, or
\item \label{KJ:item}
$|K| \ll \dist(J,K)$, $|I|\ll \dist(I,J)$, or
\item 
$|K| \ll \dist(J,K)$, $|J|\ll \dist(I,J)$.
\end{enumerate}
\item \label{IKoppositesidesofJ} 
$J$ is finite, and 
$I$ and $K$ lie on opposite sides of $J$, i.e. 
\begin{align} \label{IleqJleqK}
x\leq y\leq z \ \textup{or}\ \ 
 z\leq y\leq x,\ \ 
\textup{ for all } 
(x,y,z)\in I\times J\times K.
\end{align}
\item\label{Itbounded}
$|I|\ll 1$ and  $I\cap J = \varnothing, K\cap J = \varnothing$.
\end{enumerate}
Then the operator $\chi_I P_\mu \chi_J P_\mu \chi_K$ is 
uniformly bounded (independently of $\alpha$) in trace norm, i.e.
\begin{align}\label{tracenormuniformlybounded}
\|\chi_I P_\mu \chi_J P_\mu \chi_K\|_1\ll 1.
\end{align}
\end{proposition}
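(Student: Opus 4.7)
My plan is a case analysis with two recurring tools: H\"older's inequality $\|AB\|_1\le\|A\|_2\|B\|_2$ applied to the natural factorization
\[
\chi_I P_\mu \chi_J P_\mu \chi_K = (\chi_I P_\mu \chi_J)(\chi_J P_\mu \chi_K),
\]
and Lemma \ref{integralrankoneoperators} applied with $M=J$. Case (i) is immediate: H\"older gives $\|\chi_I P_\mu\chi_J P_\mu\chi_K\|_1 \le \|\chi_J P_\mu\|_2^2 = \int_J P_\mu(x,x)\, dx \ll |J|\ll 1$, using that $P_\mu(x,x)$ is uniformly bounded by Lemma \ref{ptopi:lem}. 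In all remaining cases the hypotheses ensure that $I$ and $K$ are (essentially) disjoint from $J$, so Lemma \ref{ptomu:lem} replaces $P_\mu$ by $\Pi_\mu$ modulo a trace-class error of size $\ll 1$, and from that point on I only need the pointwise bound $|\Pi_\mu(x,y)|\ll(1+|x-y|)^{-1}$ from Lemma \ref{ptopi:lem}. Integrating this bound yields the two auxiliary estimates
\[
\|\chi_A\Pi_\mu\chi_B\|_2^2 \ll \int_B \frac{dy}{1+\dist(y,A)},\qquad \|\chi_A\Pi_\mu(\cdot,z)\|_{{\rm L}^2}^2 \ll \frac{1}{1+\dist(z,A)},
\]
together with the sharper variant $\|\chi_A\Pi_\mu(\cdot,z)\|_{{\rm L}^2}^2 \ll |A|(1+\dist(z,A))^{-2}$ valid when $|A|\ll 1$.

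In cases (ii)(b) and (ii)(c) I apply H\"older at the product level. In (b), the hypotheses $|I|\ll\dist(I,J)$ and $|K|\ll\dist(J,K)$, combined with the first auxiliary estimate, give $\|\chi_I\Pi_\mu\chi_J\|_2^2 \ll |I|/(1+\dist(I,J)) \ll 1$ and $\|\chi_J\Pi_\mu\chi_K\|_2^2\ll |K|/(1+\dist(J,K))\ll 1$. In (c) the second factor is handled the same way, while the first is reorganized by integrating over $J$ first so as to exchange the roles of $I$ and $J$: $\|\chi_I\Pi_\mu\chi_J\|_2^2 \ll |J|/(1+\dist(I,J))\ll 1$, using $|J|\ll\dist(I,J)$.

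For (ii)(a), (iii), (iv) I use Lemma \ref{integralrankoneoperators} with $M=J$:
\[
\|\chi_I\Pi_\mu\chi_J\Pi_\mu\chi_K\|_1 \le \int_J \|\chi_I\Pi_\mu(\cdot,z)\|_{{\rm L}^2}\|\chi_K\Pi_\mu(z,\cdot)\|_{{\rm L}^2}\, dz.
\]
In (ii)(a), WLOG $|J|\ll\dist(I,J)$, so on $J$ the first factor is $\asymp(1+\dist(I,J))^{-1/2}$ while the remaining integral is $\ll\sqrt{|J|}$, yielding $\sqrt{|J|/(1+\dist(I,J))}\ll 1$. In (iii) the ordering $I<J<K$ turns this, via the change of variables $u=z-b$, $L=e-b$ (with $b$ the right end of $I$ and $e$ the left end of $K$), into a subinterval of $\int_0^L du/\sqrt{(1+u)(1+L-u)}$, which is dominated by the beta integral $\int_0^L du/\sqrt{u(L-u)}=\pi$ (since $(1+u)(1+L-u)\ge u(L-u)$). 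In (iv) I use the sharper auxiliary bound with $|A|=|I|\ll 1$ to obtain
\[
\|\chi_I\Pi_\mu\chi_J\Pi_\mu\chi_K\|_1 \ll \sqrt{|I|}\int_J \frac{dz}{(1+\dist(z,I))\sqrt{1+\dist(z,K)}}\ll\sqrt{|I|}\ll 1,
\]
where the integral is controlled by Young's inequality $ab\le\tfrac{2}{3}a^{3/2}+\tfrac{1}{3}b^3$ applied to $a=(1+\dist(z,I))^{-1}$ and $b=(1+\dist(z,K))^{-1/2}$, together with $\int_{\mathbb R}(1+|t|)^{-3/2}\,dt<\infty$.

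The main organizational obstacle is matching each subcase to the appropriate combination of estimates: some subcases need H\"older applied at the product level with both Hilbert--Schmidt factors of order $\sqrt{\cdot/(1+\dist)}$, while others require splitting via Lemma \ref{integralrankoneoperators} and then the sharper variant of the auxiliary bound with the $|A|\ll 1$ gain. Once this dichotomy is identified, the computations themselves are routine.
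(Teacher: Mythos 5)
Your proof is correct, and its core machinery coincides with the paper's: both rest on Lemma~\ref{integralrankoneoperators} and the pointwise decay $|P_\mu(x,y)|\ll(1+|x-y|)^{-1}$, and in case~\ref{IKoppositesidesofJ} you reproduce the same beta-integral computation. The packaging differs in two respects. First, the paper derives a single integral bound, namely \eqref{int:eq}, by applying Lemma~\ref{integralrankoneoperators} with $M=J$ directly to the kernel of $\chi_I P_\mu\chi_J P_\mu\chi_K$ and then invoking \eqref{pip:eq}; this bound requires no disjointness of the intervals, and all four cases are settled by estimating the one integral. You instead split the cases between H\"older's inequality $\|AB\|_1\le\|A\|_2\|B\|_2$ applied to $(\chi_I P_\mu\chi_J)(\chi_J P_\mu\chi_K)$ (for (i), (ii)(b), (ii)(c)) and Lemma~\ref{integralrankoneoperators} (for (ii)(a), (iii), (iv)); the H\"older route gives particularly clean Hilbert--Schmidt computations in (ii)(b),(c). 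Second, you insert an unnecessary replacement of $P_\mu$ by $\Pi_\mu$ via Lemma~\ref{ptomu:lem} before estimating: since \eqref{pip:eq} already provides the bound $|P_\mu(x,y)|\ll(1+|x-y|)^{-1}$ for $P_\mu$ itself, you can run every estimate with $P_\mu$ directly. This matters slightly for rigor, because Lemma~\ref{ptomu:lem} presupposes $I\cap J=\varnothing$ and $K\cap J=\varnothing$, which the hypotheses of (ii)(a)--(c) do not by themselves guarantee; dropping the $\Pi_\mu$ detour removes this concern and brings your argument in line with the paper's, which never needs it.
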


In the free case, i.e. for $V\equiv 0$, 
Proposition \ref{proposition} with assumptions similar to \eqref{Ibounded} and \ref{IKoppositesidesofJ} has been obtained in 
\citep[Lemma]{LandauWidom}.   

\begin{proof}[Proof of Proposition \ref{proposition}]
According to Lemma \ref{ptomu:eq} and bound \eqref{pip:eq},
\begin{align}\label{int:eq}
\|\chi_I P_\mu \chi_J P_\mu \chi_K\|_{\mathfrak{S}_1} 
\ll \int\limits_J 
\Biggl[\int\limits_I(1+|z-x|)^{-2}\, dx\Biggr]^{\frac{1}{2}}
\Biggl[\int\limits_K (1+|z-y|)^{-2}\, dy\Biggr]^{\frac{1}{2}}\, dz.
\end{align}
Let us estimate this integral under the conditions of the lemma.  

Assume condition \ref{Ibounded}. i.e. $|J|\ll 1$. 
Both integrals inside \eqref{int:eq} are uniformly bounded, 
even if $I$ and $K$ are 
unbounded. Thus the trace norm does not exceed $|J|\ll 1$, as required.  

Assume now condition \ref{lengthofIlowerorder}. 
Using 
the Cauchy-Schwarz inequality, we estimate the right-hand side 
of \eqref{int:eq} by 
\begin{align*}
\Biggl[\int\limits_J 
\int\limits_I(1+|z-x|)^{-2}\, dx dz\Biggr]^{\frac{1}{2}}
\Biggl[\int\limits_J 
\int\limits_K(1+|z-y|)^{-2}\, dy dz\Biggr]^{\frac{1}{2}}.
\end{align*}
The first integral is bounded by
\begin{align*}
|J|\big(1+\dist(I, J)\big)^{-1} \quad \textup{or} \quad 
|I|\big(1+\dist(I, J)\big)^{-1}.
\end{align*}
The second integral is bounded by
\begin{align*}
|K|\big(1+\dist(J, K)\big)^{-1}
\quad \textup{or} \quad 
|J|\big(1+\dist(J, K)\big)^{-1}.
\end{align*} 
Thus, under any of the conditions \ref{lengthofIlowerorder}, 
the right-hand side 
of \eqref{int:eq} is uniformly bounded, as required.   
 
Assume that the first of the 
conditions \eqref{IleqJleqK} holds. 
%
Let 
\begin{align}\label{intervals:eq}
I = (s_1, t_1), J = (s_2, t_2), K = (s_3, t_3)
\end{align}
with
\begin{align*}
-\infty \le s_1 < t_1\le s_2 < t_2 \le s_3 < t_3\le \infty.
\end{align*} 
Using \eqref{int:eq}, we get the bound
\begin{align*}
\|\chi_I P_\mu \chi_J P_\mu \chi_K\|_1 
&\ll \int\limits_{s_2}^{t_2} 
\Biggl[\int\limits_{s_1}^{t_1}|z-x|^{-2}\, dx\Biggr]^{\frac{1}{2}}
\Biggl[\int\limits_{s_3}^{t_3} |z-y|^{-2}\, dy\Biggr]^{\frac{1}{2}}\, dz
\nonumber\\
&\ll\int\limits_{s_2}^{t_2}
(z-t_1)^{-\frac{1}{2}}(s_3-z)^{-\frac{1}{2}}\, dz\\
&\le \int\limits_{s_2}^{t_2} 
(z-s_2)^{-\frac{1}{2}}(t_2-z)^{-\frac{1}{2}}\, dz 
= \int\limits_0^s 
z^{-\frac{1}{2}}(s-z)^{-\frac{1}{2}}\, dz,
\end{align*}
with $s = t_2 - s_2$.
By rescaling, the last integral equals
\begin{align*}
\int\limits_0^1 
z^{-\frac{1}{2}}(1-z)^{-\frac{1}{2}}\, dz\ll 1,
\end{align*}
which leads to \eqref{tracenormuniformlybounded} again.

Finally, assume that \ref{Itbounded} holds. 
The right-hand side of \eqref{int:eq} is bounded by 
\begin{align*}
  |I|^{\frac{1}{2}}\int\limits_J &\ \big(1+ \dist(z, I)\big)^{-1}
\big(1+ \dist(z, K)\big)^{-\frac{1}{2}} \, dz
\nonumber\\
&\ll 
\int\limits_J \big(1+ \dist(z, I)\big)^{-\frac{3}{2}}\, dz
+ \int\limits_J \big(1+ \dist(z, K)\big)^{-\frac{3}{2}}\, dz\ll 1.
\end{align*}
The proof is complete.
\end{proof}

\subsection{Replacing Almost Periodic Functions by their Mean Value} \label{almperbymean:subsec}

Looking at the formula 
\eqref{leadpmu:eq}  
we see that the kernel of 
$\chi_I \Pi_\mu \chi_J \Pi_\mu\chi_K$ 
contains  kernels of the form 
\begin{align}\label{prodker:eq}
S_{I,J,K}(x,y; f) = \chi_I(x) \chi_K(y)\int\limits_J 
\frac{f(z)}{(z-x)(z-y)}\, dz, 
\end{align}
where $f$ is a product of functions such as 
  $\Phi(\ \cdot\ , \delta)$ and $\conjugate{\Phi(\ \cdot\ , \delta)}$. 
The following lemma gives conditions for the intervals 
$I$, $J$, $K$ under which we may replace 
$f$ in $S_{I,J,K}(x,y; f)$ by its almost periodic 
mean value while the resulting error is uniformly bounded in trace norm.

\begin{lemma}\label{replacefunctionbymeanhankel}
Let $\Theta\subset\mathbb{R}$ be a 
countable set, and let $(a_\theta)_\theta\subset\mathbb{C}$ be  such that
\begin{align}\label{at:eq}
\sum\limits_{
\substack{\theta\in\Theta\\
\theta\not = 0}}|a_\theta|\big(1+|\theta|^{-1}\big)<\infty.
\end{align}
Let the function $f\in {\rm CAP}(\mathbb{R})$ 
be defined by
\begin{align*}
f(x)=\sum\limits_{\theta\in\Theta}a_\theta e^{i\theta x}.
\end{align*}
Assume that the intervals 
$I$, $J$, $K\subset \mathbb{R}$ satisfy 
$\ \dist(I,J),\ \dist(J,K)\gg 1$ and consider the operator $S_{I,J,K}(f)$ 
in ${\rm L}^2(\mathbb{R})$ with the integral kernel 
\eqref{prodker:eq}.
Then we have
\begin{align}\label{mean_int:eq}
\|S_{I,J,K}(f) - S_{I,J,K}\bigl(\mathcal M(f)\bigr)\|_1 \ll 1.
\end{align}
\end{lemma}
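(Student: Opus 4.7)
The plan is to exploit the oscillation of the exponentials $e^{i\theta z}$ inside the $z$-integral defining $S_{I,J,K}$. Since $\mathcal{M}(e^{i\theta x}) = 0$ for $\theta \neq 0$ while $\mathcal{M}(1) = 1$, one has $\mathcal{M}(f) = a_0$, and absolute convergence of the Bohr--Fourier series for $f$ (a consequence of \eqref{at:eq}) gives
\begin{equation*}
f(x) - \mathcal{M}(f) = \sum_{\theta \in \Theta,\, \theta \neq 0} a_\theta\, e^{i\theta x}.
\end{equation*}
By the linearity of $g \mapsto S_{I,J,K}(g)$ and the triangle inequality for the trace norm, it therefore suffices to prove a uniform bound of the form
\begin{equation*}
\|S_{I,J,K}(e^{i\theta \cdot})\|_1 \ll |\theta|^{-1}, \qquad \theta \neq 0,
\end{equation*}
with the implicit constant depending only on the lower bounds assumed on $\dist(I,J)$ and $\dist(J,K)$. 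Weighting by $|a_\theta|$ and summing then yields $\|S_{I,J,K}(f)-S_{I,J,K}(\mathcal{M}(f))\|_1 \ll \sum_{\theta\neq 0} |a_\theta|/|\theta|$, which is finite by \eqref{at:eq}.

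To obtain the $|\theta|^{-1}$-bound I would integrate by parts once in $z$. Since $(x,y) \in I \times K$ is separated from $J$ by a positive distance, $z \mapsto [(z-x)(z-y)]^{-1}$ is smooth on $J$, and writing $e^{i\theta z} = (i\theta)^{-1}\partial_z e^{i\theta z}$ splits the inner integral into a boundary piece
\begin{equation*}
\frac{1}{i\theta}\left[\frac{e^{i\theta z}}{(z-x)(z-y)}\right]_{\partial J}
\end{equation*}
and an interior remainder proportional to $\int_J e^{i\theta z}\bigl[(z-x)^{-2}(z-y)^{-1} + (z-x)^{-1}(z-y)^{-2}\bigr]\, dz$. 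At each finite endpoint $z_\ast \in \partial J$ the boundary piece (multiplied by $\chi_I(x)\chi_K(y)$) is the kernel of a rank-one operator, whose trace norm Lemma \ref{integralrankoneoperators} estimates by
\begin{equation*}
|\theta|^{-1}\,\|\chi_I/(z_\ast - \cdot)\|_{{\rm L}^2}\,\|\chi_K/(z_\ast - \cdot)\|_{{\rm L}^2} \ll |\theta|^{-1}\,\dist(I,J)^{-1/2}\,\dist(K,J)^{-1/2};
\end{equation*}
unbounded endpoints of $J$ contribute nothing since the integrand decays as $|z| \to \infty$.

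For the interior remainder another application of Lemma \ref{integralrankoneoperators} reduces the trace-norm estimate to integrals of the type
\begin{equation*}
\int\limits_J \dist(z,I)^{-3/2}\, \dist(z,K)^{-1/2}\, dz
\end{equation*}
(and its symmetric counterpart with $I,K$ interchanged). The main technical point, which I view as the only nontrivial step, is to bound this by a constant independent of $|J|$. This follows without any case analysis on the geometric configuration of the intervals: on the one hand, $\dist(z,K) \geq \dist(J,K)$ for every $z \in J$; on the other, by monotonicity of the integrand in the distance variable $u = \dist(z,I)$,
\begin{equation*}
\int\limits_J \dist(z,I)^{-3/2}\, dz \leq \int\limits_{\dist(I,J)}^\infty u^{-3/2}\, du = 2\,\dist(I,J)^{-1/2}.
\end{equation*}
Multiplying these two bounds yields a constant multiple of $\dist(I,J)^{-1/2}\,\dist(J,K)^{-1/2} \ll 1$. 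Combining this with the boundary contribution gives $\|S_{I,J,K}(e^{i\theta\cdot})\|_1 \ll |\theta|^{-1}$, and the summation over $\theta$ described above then establishes \eqref{mean_int:eq}.
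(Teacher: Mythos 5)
Your proof is correct and follows essentially the same route as the paper: a single integration by parts in $z$, with a low-rank boundary term estimated directly and an interior remainder bounded via Lemma \ref{integralrankoneoperators}. The only cosmetic difference is that you integrate by parts frequency by frequency and then sum the trace norms using \eqref{at:eq}, whereas the paper integrates by parts once against the uniformly bounded antiderivative $F$ of $f-\mathcal M(f)$; expanding $F$ as $\sum_{\theta\neq 0} a_\theta (e^{i\theta z}-1)/(i\theta)$ shows these are the same computation.
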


\begin{proof}
Without loss of generality 
we may assume that 
$\mathcal{M}(f)=0$, i.e. 
$0\notin \Theta$. (otherwise consider $f-\mathcal{M}(f)$). Consider the 
primitive $F(x):=\int_0^x f(t)dt$ of $f$. 
Then the assumption 
\eqref{at:eq} implies that $F$ is uniformly bounded:
\begin{align*}
|F(x)|=\Biggl|\sum\limits_{\theta\in\Theta}a_\theta
\int\limits_0^x e^{i\theta t} dt\Biggr| 
\leq \sum\limits_{\theta\in\Theta} \Big|\frac{a_\theta}{i\theta}(e^{i\theta x}-1)\Big|\ll 1,\ \forall x\in\mathbb{R}. 
\end{align*}
Let $J = (s, t)$, so integrating by parts gives
\begin{align}\label{SIJKafterIP}
S_{I,J,K}(x,y; f)&
=\left.\chi_I(x) \chi_K(y)\frac{F(z)}{(z-x)(z-y)}\right|_{z=s}^{t} 
\nonumber\\[1ex]
&\ \ \ 
+ \chi_I(x)\chi_K(y)\int\limits_J 
\Biggl[\frac{F(z)}{(z-x)^2(z-y)}+\frac{F(z)}{(z-x)(z-y)^2}\Biggr]
\, dz.
\end{align}
The first term in \eqref{SIJKafterIP} constitutes 
the kernel of a rank two operator, whose norm, and hence trace norm as well, 
is easily estimated 
by  a constant times $\dist(I, J)^{-1/2}\dist(J,K)^{-1/2}$. 
The second term on the right-hand side of \eqref{SIJKafterIP} 
is treated with the help of 
Lemma \ref{integralrankoneoperators}, 
as in the proof of Lemma \ref{ptomu:lem}. 
Thus \eqref{mean_int:eq} follows.
\end{proof}

\section{Schatten-von Neumann Class Estimates for 
Pseudo-differential Operators with Periodic 
Amplitudes}\label{sect:schattenest}

So far our main tool for getting trace-class estimates has been 
Lemma \ref{integralrankoneoperators}. 
At the final stages of the proof, however, when we pass to  
non-smooth functions $h$, we also need some estimates in 
more general Schatten-von Neumann classes $\mathfrak S_q$ with 
$q\in (0, 1]$. Lemma \ref{integralrankoneoperators} is not applicable 
any longer, and we have to appeal to 
other results available in the literature. 

We use the formalism of pseudo-differential operators ($\Psi$DO). For a 
complex-valued function 
$p = p(x, y, \xi)$, $x, y,\xi\in\mathbb R$, 
that we call \textit{amplitude}, we define the $\Psi$DO 
$\Op(p)$ that acts on
Schwartz class functions $u$ as follows:
\begin{align}\label{pdo:eq}
\Op(p)u(x) = \frac{1}{2\pi}
\iint e^{i\xi(x-y)} p(x, y, \xi) u(y)\, dy d\xi. 
\end{align}
This integral is well-defined, e.g. for any amplitude $p$ 
which is uniformly bounded and compactly supported in the variable $\xi$.

 The main result of this section is 
 the following proposition that implies Schatten-(quasi)norm 
 estimates for the operator 
 \begin{align}\label{aamu:eq}
 A_{\alpha,\mu}=B_{\alpha,\mu}(\mathds{1}-B_{\alpha,\mu})
 \end{align}
 (see Corollary \ref{corquasinorm}).

\begin{lemma}\label{NS:lem}
Let $I$, $\Omega\subset\mathbb{R}$ 
be bounded intervals, and let the function $p$ be 
${\rm C}^\infty$ in all three variables, 
$2\pi$-periodic in $x$ and $y$, and such that 
$p(x, y, \xi) = 0$ for all $x, y\in\mathbb R$, 
and $|\xi|\ge R$ with some $R>0$. 
Denote
\begin{align*}
p[\Omega](x, y, \xi) = p(x, y, \xi) \chi_\Omega(\xi).
\end{align*}
Then, for any $q\in (0,1]$ 
we have 
%
\begin{align}\label{perom:eq}
\| \chi_{\alpha I}\Op(p)
(\mathds{1}-\chi_{\alpha I})\|_q\ll 1,
\end{align}
and 
\begin{align}\label{quasinormestpseudo}
\| \chi_{\alpha I}\Op\bigl(p[\Omega]\bigr)
(\mathds{1}-\chi_{\alpha I})\|_q \ll (\log\alpha)^{\frac{1}{q}}. 
\end{align}
The implicit constants in \eqref{perom:eq} and \eqref{quasinormestpseudo} 
depend on the amplitude $p$, number $R$  
and also on the intervals $I$ and $\Omega$. 

\end{lemma}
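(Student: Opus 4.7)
The strategy is to use the $2\pi$-periodicity of $p$ in $x$ and $y$ via Fourier series expansion, thereby reducing both estimates to the case of a symbol depending only on $\xi$, and then to invoke the Schatten-class estimates for pseudo-differential operators with discontinuous symbols from \cite{Sobolev2014}. Since $p$ is smooth and $2\pi$-periodic in $x$ and $y$, write
\begin{align*}
p(x, y, \xi) = \sum_{m, n \in \mathbb{Z}} p_{m,n}(\xi)\, e^{i(mx + ny)},
\end{align*}
where $p_{m,n}(\xi)$ are smooth, supported in $[-R, R]$, and (together with all $\xi$-derivatives) decay faster than any inverse power of $1+|m|+|n|$, uniformly in $\xi$. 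Substitution into \eqref{pdo:eq} yields
\begin{align*}
\Op(p) = \sum_{m, n} M_{e^{imx}}\, \Op\bigl(p_{m, n}(\xi)\bigr)\, M_{e^{iny}},
\end{align*}
where $M_f$ denotes multiplication by $f$. These multiplication operators are unitary and commute with $\chi_{\alpha I}$, so after applying the $q$-quasi-triangle inequality $\|A + B\|_q^q \leq \|A\|_q^q + \|B\|_q^q$ it suffices to establish the analogous bounds
\begin{align*}
\bigl\|\chi_{\alpha I}\Op\bigl(q(\xi)\bigr)(\mathds{1}-\chi_{\alpha I})\bigr\|_q \ll 1, \qquad
\bigl\|\chi_{\alpha I}\Op\bigl(q(\xi)\chi_\Omega(\xi)\bigr)(\mathds{1}-\chi_{\alpha I})\bigr\|_q \ll (\log\alpha)^{1/q},
\end{align*}
for an arbitrary smooth, compactly supported $q=q(\xi)$, with constants controlled by a finite $C^k$-seminorm of $q$. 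The rapid decay of $p_{m,n}$ in these seminorms will then ensure that $\sum_{m,n}\|\cdots\|_q^q$ converges.

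In the smooth case, the kernel of $\Op(q)$ is a Schwartz function of $x-y$. Writing $\alpha I = (a, b)$ and decomposing $\mathds{1}-\chi_{(a,b)} = \chi_{(-\infty, a]} + \chi_{[b, \infty)}$, the operator $\chi_{(a,b)}\Op(q)(\mathds{1}-\chi_{(a,b)})$ splits into the two ``single-corner'' pieces $\chi_{(a, b)}\Op(q)\chi_{(-\infty, a]}$ and $\chi_{(a, b)}\Op(q)\chi_{[b, \infty)}$. Each of these is a compression (by projections of norm one) of the corresponding semi-infinite corner $\chi_{(c,\infty)}\Op(q)\chi_{(-\infty, c]}$, whose $\mathfrak S_q$ quasi-norm is finite and translation-invariant in $c$ because the kernel is Schwartz. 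The contraction property for singular values then yields a uniform $\mathfrak S_q$ bound depending only on the Schwartz seminorms of $\check q$, and hence on a $C^k$-seminorm of $q$. For the discontinuous case, the bound $(\log\alpha)^{1/q}$ on $\|\chi_{\alpha I}\Op(q\chi_\Omega)(\mathds{1}-\chi_{\alpha I})\|_q$ is precisely what the sharp Schatten-class estimates for pseudo-differential operators with symbols exhibiting a jump discontinuity, developed in \cite{Sobolev2014}, deliver; applying them to each $p_{m, n}\chi_\Omega$ and summing gives \eqref{quasinormestpseudo}.

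The main obstacle lies in the discontinuous case: one must extract from \cite{Sobolev2014} the explicit dependence of the $(\log\alpha)^{1/q}$ bound on a finite $C^k$-seminorm of the smooth factor of the symbol, so that the summation over Fourier indices $(m, n)$ closes without losing the logarithmic exponent. This is essentially a bookkeeping exercise in the proofs of \cite{Sobolev2014}. As a sanity check in the simplest case $q\equiv 1$, the bound can be re-derived via the Landau--Widom asymptotics: setting $B = \chi_{\alpha I}\mathcal{F}^*\chi_\Omega\mathcal{F}\chi_{\alpha I}$ and $X = \chi_{\alpha I}\mathcal{F}^*\chi_\Omega\mathcal{F}(\mathds{1}-\chi_{\alpha I})$, the identity $XX^* = B(\mathds{1}-B)$ gives $\|X\|_q^q = \|B(\mathds{1}-B)\|_{q/2}^{q/2}$, which is of order $\log \alpha$ by the classical spectral asymptotics of truncated Wiener--Hopf operators with an indicator symbol.
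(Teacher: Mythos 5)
Your proposal follows essentially the same route as the paper's proof: expand $p$ in a double Fourier series in $x$ and $y$, pull the unitary exponentials $e^{imx}$, $e^{iny}$ out past $\chi_{\alpha I}$, reduce to $\xi$-only symbols whose Schatten estimates are controlled by a finite $C^k$-seminorm, and close the sum over Fourier indices via the $q$-quasi-triangle inequality and the rapid decay of the coefficients in those seminorms. The only cosmetic differences are that for the smooth bound \eqref{perom:eq} you sketch a direct Schwartz-kernel / semi-infinite-corner argument and add a Landau--Widom sanity check for $q\equiv 1$, whereas the paper simply cites Proposition \ref{FA:prop} (the seminorm-explicit version of \cite{Sobolev2014}'s estimates) for both bounds; your remark that the seminorm dependence must be ``extracted'' from \cite{Sobolev2014} is exactly what Proposition \ref{FA:prop} records.
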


Our proof relies on similar results 
from \cite{Sobolev2014}. We state these results
in the form adjusted for our purposes.

\begin{proposition}\label{FA:prop}
Let $I$, $\Omega\subset\mathbb{R}$ 
be bounded intervals, and let the function $p = p(\xi)$ be 
${\rm C}^\infty_0(\mathbb R)$ 
with $p(\xi) = 0$ for $|\xi|\ge R$ with some $R>0$. 
For $q\in (0, 1]$ denote
\begin{align}\label{pm:eq}
N_q(p):= \max\limits_{0\le m\le \lfloor 2 q^{-1}\rfloor + 1}\sup_{\xi}|p^{(m)}(\xi)|
<\infty.
\end{align} 
Then
\begin{align}\label{FA1:eq}
\| \chi_{\alpha I}\Op(p)
(\mathds{1}-\chi_{\alpha I})\|_q\ll N_q(p),
\end{align}
and 
\begin{align} \label{FA:eq}
\| \chi_{\alpha I}\Op\bigl(p[\Omega]\bigr)
(\mathds{1}-\chi_{\alpha I})\|_q\ll (\log\alpha)^{\frac{1}{q}}
N_q(p).
\end{align} 
The implicit constants in \eqref{FA1:eq} and \eqref{FA:eq} 
depend on the intervals $I$, $\Omega$ and number $R$, but are independent 
of the amplitude $p$. 
\end{proposition}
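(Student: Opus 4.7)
The plan is to derive both bounds as a one-dimensional specialisation of the pseudodifferential Schatten estimates from \cite{Sobolev2014}. The crucial simplification here is that $p=p(\xi)$ depends only on the dual variable, so $\Op(p)$ is translation invariant with convolution kernel $\check p(x-y)$, and $\chi_{\alpha I}\Op(p)(\mathds{1}-\chi_{\alpha I})$ is supported in the two off-diagonal strips $\{x\in\alpha I,\ y\notin\alpha I\}$, which localise around the two endpoints of the interval $\alpha I$. Up to kernels with exponentially small $\mathfrak S_q$-quasinorm, the operator therefore splits into two model Hankel-type operators $\chi_{\mathbb R_+}\Op(p)\chi_{\mathbb R_-}$ (and its reflection), and it suffices to estimate this model.

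For \eqref{FA1:eq}, the hypothesis $p\in C_c^\infty(\mathbb R)$ gives $\check p\in\mathcal S(\mathbb R)$ with $|\check p(x)|\ll N_q(p)(1+|x|)^{-\lfloor 2/q\rfloor-1}$ by integration by parts. Decomposing the off-diagonal quadrant $\{x>0>y\}$ dyadically into boxes $B_n$ of sidelength $\sim 2^n$, and approximating the restriction of $T=\chi_{\mathbb R_+}\Op(p)\chi_{\mathbb R_-}$ to each $B_n$ by finite-rank operators via a Fourier series expansion of the smooth kernel on the box, one obtains $\|T\chi_{B_n}\|_q^q\ll N_q(p)^q\, 2^{-\varepsilon nq}$ for some $\varepsilon>0$; the order $\lfloor 2q^{-1}\rfloor+1$ appearing in \eqref{pm:eq} is chosen precisely to ensure $\varepsilon q>0$, i.e.\ absolute convergence of the $\ell^q$-sum. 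Summing over $n$ yields a bound independent of $\alpha$.

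For \eqref{FA:eq}, write $p[\Omega]=p\chi_\Omega$ with $\Omega=(a,b)$, and split $p[\Omega]$ into a smooth piece (handled by \eqref{FA1:eq}) and two localised ``jump'' contributions at $\xi=a,b$. Each jump is, after conjugation by $e^{icx}$, essentially a sharp Riesz-type projection $\Op(\chi_{(-\infty,0)})$. For the latter, $\chi_{\alpha I}\Op(\chi_{(-\infty,0)})(\mathds{1}-\chi_{\alpha I})$ is a truncated-Hilbert-transform-type operator whose singular values, by the classical Landau--Widom spectral analysis, cluster near $\{0,1\}$ with an exponentially narrow transition zone of width $\sim(\log\alpha)^{-1}$ containing $\sim\log\alpha$ ``intermediate'' singular values; summing $s_n^q$ over this transition zone gives the $(\log\alpha)^{1/q}$ bound, with a universal implicit constant depending only on the endpoints of $I$ and $\Omega$.

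The principal obstacle is the Schatten-$q$ quasinorm bookkeeping for $q<1$, where only $\|A+B\|_q^q\le\|A\|_q^q+\|B\|_q^q$ is available in place of the triangle inequality. This forces every decomposition above to produce only $O(1)$ non-trivial pieces per scale, and it is precisely this constraint that dictates the derivative order $\lfloor 2q^{-1}\rfloor+1$ in the seminorm $N_q(p)$: any smaller choice would leave a tail in the dyadic sum of the model operator that diverges in $\ell^q$. The sharp Schatten-$q$ asymptotics for the truncated-Hilbert-transform model, together with the uniformity of the implicit constants in the amplitude $p$, are the non-trivial inputs imported from \cite{Sobolev2014}.
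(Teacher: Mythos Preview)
The paper does not actually prove this proposition: it records it as ``a direct consequence of \cite[Corollary 4.4, Theorem 4.6]{Sobolev2014}'' and moves on, adding only the remark that because $p$ is $x,y$-independent, the quasi-classical bounds of \cite{Sobolev2014} translate directly into the scaling bounds needed here. In other words, the paper treats Proposition~\ref{FA:prop} as a black-box import.

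Your proposal is therefore not a comparison target but an attempt to open that black box. The ingredients you list --- localisation to the two endpoints of $\alpha I$, kernel decay from integration by parts, dyadic decomposition for the smooth piece, and a smooth-plus-jump splitting together with the Landau--Widom singular-value analysis of the truncated Hilbert transform for the discontinuous piece --- are indeed the ideas underlying the results in \cite{Sobolev2014}. Two points deserve care if you want this to stand as an actual proof. First, the ``exponentially small'' decoupling of the two endpoints is only valid for the smooth symbol $p$; for $p[\Omega]$ the kernel decays only like $|x-y|^{-1}$, so the cross terms between the two endpoints are not negligible and must be handled directly (your later paragraph on \eqref{FA:eq} implicitly does this, but the opening paragraph suggests the reduction applies uniformly). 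Second, the dyadic argument as written is too schematic: the quadrant $\{x>0>y\}$ is unbounded in both directions, so ``boxes of sidelength $\sim 2^n$'' do not tile it finitely per scale, and one needs either a lattice decomposition in $(x,y)$ with Birman--Solomyak-type $\mathfrak S_q$ bounds per unit box, or a more careful annular decomposition in $x-y$ combined with an $L^2$-bound in the transverse direction. The derivative count $\lfloor 2q^{-1}\rfloor+1$ is correct, but the summability it guarantees has to be set up against the right decomposition.
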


Thus, our task is to extend Proposition \ref{FA:prop} to 
amplitudes, that are periodic in $x$ and $y$. 

A few remarks are in order. 
Proposition \ref{FA:prop} is a direct consequence of 
\cite[Corollary 4.4, Theorem 4.6]{Sobolev2014}. 
At this point it is important to emphasize that 
the main focus of \cite{Sobolev2014} was the \textit{quasi-classical} asymptotics, 
whereas our objective in the current paper is the \textit{scaling} asymptotics. 
In the context of pseudo-differential operators, these two types of asymptotics 
are equivalent if the amplitude $p$ is $x, y$-independent. 

\begin{proof}[Proof of Lemma \ref{NS:lem}] 
We prove only the bound \eqref{quasinormestpseudo}. The bound \eqref{perom:eq} 
can be derived in a similar way. 

Performing translations, dilations and renormalization of $\alpha$, 
one may assume that  
$I=\Omega=(0,1)$. Since $p$ is $2\pi$-periodic in $x$ and $y$, 
we can represent it as
\begin{align*}
p(x, y, \xi) = \sum\limits_{nl} e^{inx + il y}a_{nl}(\xi), 
\end{align*}
where $a_{nl}(\ \cdot\ )$ are ${\rm C}^\infty_0$ in 
$\xi$ with supports in $(-R, R)$, 
and decay in $n$ and $l$ 
faster than any reciprocal polynomial, uniformly in 
$\xi\in (-R, R)$. 
Precisely, 
a straightforward integration by parts shows that 
\begin{align*}
|a_{nl}^{(m)}(\xi)| \ll (1+|n|)^{-s}(1+|l|)^{-t}
\int\limits_{0}^{2\pi}\int\limits_{0}^{2\pi}
|\partial_x^s\partial_y^t\partial_\xi^m p(x, y, \xi)|\, dx dy, \quad 
n, l\in\mathbb Z,
\end{align*}
for arbitrary $t, s = 0, 1, \dots$, 
so that 
\begin{align*}
N_q(a_{nl})\ll (1+|n|)^{-s} (1+|l|)^{-t}, \quad n, l\in\mathbb Z,
\end{align*}
with a constant  independent of $n, l$, but depending on 
$s, t$, $q$ 
(see \eqref{pm:eq} for the definition of $N_q$). 
Consequently, the operator $\Op(p[\Omega])$ can be represented as follows:
\begin{align*}
\Op(p[\Omega]) = \sum\limits_{nl} e^{inx} A_{nl} e^{ily},\ \quad 
A_{nl} = \Op(a_{nl} \chi_{\Omega}).
\end{align*}
Using \eqref{FA:eq}, we immediately obtain the bound
\begin{align*}
\|\chi_{\alpha I}A_{nl}(\mathds{1}-\chi_{\alpha I})\|_q^q
\ll (1+|n|)^{-sq} (1+|l|)^{-tq}
\log\alpha.
\end{align*}
Employing the $q$-triangle inequality 
for the ideals $\mathfrak S_q$ (see  \cite[p. 262]{BS}), we  arrive at the bound
\begin{align*}
\|\chi_{\alpha I}\Op\bigl(p[\Omega]\bigr)
(\mathds{1}-\chi_{\alpha I})\|_q^q
\le &\ \sum\limits_{nl}
\|\chi_{\alpha I}A_{nl}(\mathds{1}-\chi_{\alpha I})\|_q^q\\
\ll &\ \log\alpha \sum\limits_{nl} (1+|n|)^{-sq} (1+|l|)^{-tq}.
\end{align*}
The sum on the right-hand side is finite if $sq, tq > 1$. 
This completes the proof. 
\end{proof}

\begin{corollary}\label{corquasinorm} 
Assume that $V\in {\rm C}^\infty(\mathbb R)$. 
Let $A_{\alpha, \mu}$ be as defined in \eqref{aamu:eq}.
\begin{enumerate}
\item
Let $I\subset \mathbb R$ 
be an interval. 
If $\mu\in \big(\sigma(H)\big)^{\circ}$, then for any $q\in (0, 1]$, 
\begin{align}\label{pmus:eq}
\|\chi_{\alpha I} P_\mu(\mathds{1}-\chi_{\alpha I})\|_q^q
\ll \log(\alpha). 
\end{align}
If $\mu\notin \big(\sigma(H)\big)^{\circ}$, 
then for any $q\in (0, 1]$, 
\begin{align}\label{pmu:eq}
\|\chi_{\alpha I} P_\mu(\mathds{1}-\chi_{\alpha I})\|_q^q
\ll 1. 
\end{align} 
\item 
For any $q\in (0,1]$,
\begin{align}\label{Aalphamuquasiest}
\|A_{\alpha,\mu}\|_q^q \ll
\begin{cases}
1,\ \mu\notin (\sigma(H))^\circ,\\[0.2cm]
\log(\alpha),\ \mu\in (\sigma(H))^\circ.
\end{cases}
\end{align} 
Moreover, assume that $h$ satisfies Condition 
\ref{h:cond}. 
Then $h(B_{\alpha,\mu})$ is of trace class and 
\begin{align}\label{bamu:eq}
\|h(B_{\alpha, \mu})\|_1 \ll 
\begin{cases}
\alpha |h(1)| + 1, \ \mu\notin (\sigma(H))^\circ,\\[0.2cm]
\alpha |h(1)| + \log(\alpha),\ \mu\in (\sigma(H))^\circ.
\end{cases}
\end{align}
\item\label{thmout:item}
If $\mu\notin (\sigma(H))^\circ$, then \eqref{trhb0:eq} holds. 
\end{enumerate}
The implicit constants in the inequalities 
\eqref{pmus:eq}, \eqref{pmu:eq}, \eqref{Aalphamuquasiest} 
and \eqref{bamu:eq} are independent of $\alpha$.
\end{corollary}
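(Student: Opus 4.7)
The plan has two main steps. Part (1) is a pseudo-differential reduction: I realise $P_\mu$ as $\Op(p)$ for an amplitude $p$ of the type covered by Lemma \ref{NS:lem}, and then apply the appropriate bound (smooth versus sharp cutoff in $\xi$). Parts (2) and (3) follow quickly from part (1) via the identity $A_{\alpha,\mu}=TT^*$ and the decomposition \eqref{decomph}.

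\textbf{Construction of the symbol.} Set
$F(x,y,\xi):=\sum_j \chi_{(-\infty,\mu)}(\lambda_j(\xi))\phi_j(x,\xi)\conjugate{\phi_j(y,\xi)}$,
the kernel of $\chi_{(-\infty,\mu)}(H(\xi))$, and put $q(x,y,\xi):=e^{-i\xi(x-y)}F(x,y,\xi)$. Because $H(\xi+1)=H(\xi)$, $F$ is $1$-periodic in $\xi$; the Floquet identity $\phi_j(x+2\pi,\xi)=e^{2\pi i\xi}\phi_j(x,\xi)$ combined with the phase $e^{-i\xi(x-y)}$ makes $q$ \emph{jointly} $2\pi$-periodic in $x$ and in $y$. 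Choose $\rho\in {\rm C}^\infty_0(\mathbb R)$ with $\sum_{m\in\mathbb Z}\rho(\xi-m)\equiv 1$; the $1$-periodicity of $F$ in $\xi$ then yields $\int_{\mathbb R}\rho(\xi)F(x,y,\xi)\,d\xi=\int_{-1/2}^{1/2}F(x,y,\xi)\,d\xi=P_\mu(x,y)$, so $P_\mu=\Op(p)$ with $p:=2\pi\rho\,q$ compactly supported in $\xi$ and $2\pi$-periodic in $x,y$.

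\textbf{Applying Lemma \ref{NS:lem}.} If $\mu\notin(\sigma(H))^\circ$, no eigenvalue of any $H(\xi)$ hits $\mu$, so analytic perturbation theory makes $\chi_{(-\infty,\mu)}(H(\xi))$ (and therefore $F$, $q$ and $p$) smooth in $\xi$; bound \eqref{perom:eq} yields \eqref{pmu:eq}. If $\mu\in(\sigma(H))^\circ$, pick the genuine Fermi band $S=[\mu_j,\nu_{j+n-1}]$ and split $P_\mu=P_{\mu_j}+P_\mu[S]$. The first summand is covered by the previous case (with $\mu_j$ a band edge, hence outside $(\sigma(H))^\circ$). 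From \eqref{calP:eq} I write $P_\mu[S]=\Op(p_S[\Omega])$ with $\Omega=(2k_j-\delta,\delta)$ and a smooth compactly supported amplitude $p_S$ built from $E(x,\xi)\conjugate{E(y,\xi)}$ and an auxiliary cut-off $\eta\in {\rm C}^\infty_0(\mathbb R)$ equal to $1$ on $\Omega$; now \eqref{quasinormestpseudo} supplies the $\log\alpha$ bound, and the $q$-triangle inequality on $\mathfrak S_q$ combines both pieces into \eqref{pmus:eq}.

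\textbf{Parts (2) and (3).} A short computation using $P_\mu^2=P_\mu$ gives $A_{\alpha,\mu}=B_{\alpha,\mu}-B_{\alpha,\mu}^2=\chi_{\alpha I}P_\mu(\mathds{1}-\chi_{\alpha I})P_\mu\chi_{\alpha I}=TT^*$ with $T:=\chi_{\alpha I}P_\mu(\mathds{1}-\chi_{\alpha I})$, hence $\|A_{\alpha,\mu}\|_q^q=\|T\|_{2q}^{2q}$. For $q\in(0,1/2]$ this is part (1) directly; for $q\in(1/2,1]$ the inequality $\sum_k s_k(T)^{2q}\leq\sum_k s_k(T)=\|T\|_1$ (valid since $\|T\|\leq 1$) reduces to the $q=1$ instance of part (1), proving \eqref{Aalphamuquasiest}. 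For the trace norm of $h(B_{\alpha,\mu})$ I decompose $h(t)=th(1)+h_1(t)$: the linear piece contributes $\|h(1)B_{\alpha,\mu}\|_1=|h(1)|\tr B_{\alpha,\mu}\ll\alpha|h(1)|$ by Remark \ref{lead:rem}. Condition \ref{h:cond} together with $h_1(0)=h_1(1)=0$ implies a two-sided H\"older estimate $|h_1(t)|\leq C(t(1-t))^\gamma$ for some $\gamma\in(0,1]$, so spectral calculus produces $\|h_1(B_{\alpha,\mu})\|_1=\sum_k|h_1(\lambda_k)|\leq C\|A_{\alpha,\mu}\|_\gamma^\gamma$, which combined with \eqref{Aalphamuquasiest} gives \eqref{bamu:eq}. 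Finally, when $\mu\notin(\sigma(H))^\circ$, the same estimate yields $|\tr h_1(B_{\alpha,\mu})|\leq\|h_1(B_{\alpha,\mu})\|_1\ll 1$, and adding $h(1)\tr B_{\alpha,\mu}=2\alpha h(1)N(\mu,H)+\mathcal O(1)$ assembles into \eqref{trhb0:eq}. The main technical hurdle is the symbol construction of part (1): the $2\pi$-periodicity of $q$ relies on the Floquet phase cancelling the factor $e^{-i\xi(x-y)}$, while the ${\rm C}^\infty$-regularity of $\chi_{(-\infty,\mu)}(H(\xi))$ in $\xi$ for $\mu\notin(\sigma(H))^\circ$ is where the hypothesis $V\in {\rm C}^\infty(\mathbb R)$ enters, through Propositions \ref{basic:prop}--\ref{basic1:prop}.
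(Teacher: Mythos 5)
Your proposal is correct and follows essentially the same route as the paper: realise the relevant pieces of $P_\mu$ as $\Psi$DOs with amplitudes that are $2\pi$-periodic in $x,y$ and compactly supported in $\xi$, then invoke Lemma \ref{NS:lem} (bound \eqref{perom:eq} when the $\xi$-cutoff is smooth, \eqref{quasinormestpseudo} when it is a sharp indicator on the Fermi interval $\Omega$), and finally reduce parts (2) and (3) to part (1) through the decomposition \eqref{decomph} and the H\"older bound $|h_1(t)|\ll t^q(1-t)^q$.

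Two small remarks. First, your passage from part (1) to $\|A_{\alpha,\mu}\|_q^q$ is more roundabout than necessary: the paper simply observes $\|A_{\alpha,\mu}\|_q=\|T(P_\mu\chi_{(-\alpha,\alpha)})\|_q\le\|T\|_q\|P_\mu\chi_{(-\alpha,\alpha)}\|\le\|T\|_q$, which avoids your case split on $q\lessgtr\tfrac12$; your factoring $A_{\alpha,\mu}=TT^*$ and comparing $\|T\|_{2q}^{2q}$ with $\|T\|_1$ is correct but not needed. Second, your closing sentence misattributes where $V\in{\rm C}^\infty$ enters: the smoothness of $\chi_{(-\infty,\mu)}(H(\xi))$ in the quasimomentum $\xi$ comes from analytic perturbation theory (it uses only that the relevant eigenvalues stay separated from $\mu$), whereas $V\in{\rm C}^\infty$ is needed to make the amplitude ${\rm C}^\infty$ in the \emph{spatial} variables $x,y$, which is what Lemma \ref{NS:lem} requires. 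This is only a misstatement about the role of the hypothesis and does not affect the validity of the argument.
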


\begin{proof} 
It suffices to prove \eqref{pmus:eq} and \eqref{pmu:eq} 
for the projections $P_\mu[S]$ under the conditions 
$\mu\in S^\circ$ and $\mu\notin S^\circ$ respectively, for any 
``genuine'' band $S$ of the type \eqref{properband:eq}. 

Suppose that $\mu\in S^\circ$. 
By virtue of \eqref{calP:eq}, 
the operator $P_\mu[S]$ has the form 
$\Op\bigl(p[\Omega]\bigr)$ with 
\begin{align*} 
p(x, y, \xi) = E(x, \xi) \conjugate{E(y, \xi)} \quad
\textup{and}
\quad 
\Omega = (2k_j-\delta, \delta), 
\end{align*}
where $k_j$ is as defined in \eqref{kj:eq},  and 
$\delta\in (k_j, k_j+n/2)$ 
is the unique solution 
of the equation $\Lambda(\delta) = \mu$. 
The function $E(x, \xi)$ is $2\pi$-periodic in $x$, and due to 
the ${\rm C}^\infty$-smoothness of $V$, 
it is also ${\rm C}^\infty$-smooth in $x$. Now 
\eqref{pmus:eq} follows from \eqref{quasinormestpseudo}. 

Suppose that $\mu\notin S^\circ$. According to \eqref{projS:eq}, 
either $P_\mu[S] = 0$,
in which case \eqref{pmu:eq} is trivial, or 
\begin{align*}
P_\mu[S](x, y) = \int_{n \mathbb T} \Phi(x, k) \conjugate{\Phi(y, k)} dk. 
\end{align*}
Using a straightforward partition of unity on the circle $n\mathbb T$ 
one can represent $P_\mu[S]$ as a finite sum of operators 
of the form $\Op(p)$ with 
\begin{align*}
p(x, y, \xi) = E(x, \xi) \conjugate{E(y, \xi)} \zeta(\xi),\ 
\zeta\in {\rm C}^\infty_0(\mathbb R).
\end{align*}
Therefore, \eqref{pmu:eq} is a consequence of \eqref{perom:eq}.

From $\|P_\mu \chi_{(-\alpha, \alpha)}\|\leq 1$ we get that
\begin{align*}
\|A_{\alpha,\mu}\|_q 
= \|\chi_{(-\alpha, \alpha)} 
P_\mu (\mathds{1}-\chi_{(-\alpha, \alpha)}) 
P_\mu \chi_{(-\alpha, \alpha)}\|_q
\leq \|\chi_{(-\alpha, \alpha)} P_\mu (\mathds{1}-\chi_{(-\alpha, \alpha)})\|_q,
\end{align*}
and \eqref{Aalphamuquasiest} follows from \eqref{pmu:eq}. 

To prove \eqref{bamu:eq} we use the representation 
\eqref{decomph}:  $h(t) = t h(1) + h_1(t)$, so that 
$h_1(0) = h_1(1) = 0$ and $|h_1(t)|\ll t^q (1-t)^q$, where 
$q\in (0, 1]$ is the H\"older parameter of the function $h$. 
The first term on the 
right-hand side of \eqref{bamu:eq} follows from the bound
\eqref{balphabound:eq}. 
For the second term write:
\begin{align*}
\|h_1(B_{\alpha, \mu})\|_1 \ll \|A_{\alpha, \mu}^q\|_1
= \|A_{\alpha, \mu}\|_q^q,
\end{align*}
and hence the required bounds follow from \eqref{Aalphamuquasiest}. 
Together with Remark \ref{lead:rem}, this also implies 
Part (\ref{thmout:item}) of the corollary. 
\end{proof}
  
\section{Proof of Theorem \ref{theorem}: symmetric polynomial functions}\label{sect:proofpol}

By virtue of Corollary \ref{corquasinorm}(\ref{thmout:item}), 
the formula \eqref{trhb0:eq} is already proved. Thus it remains  
to prove Theorem \ref{theorem} for $\mu\in (\sigma(H))^\circ$. 
From now on we assume that $\mu$ is an interior point of a band $S$ of the type 
\eqref{properband:eq}.  As before, define 
$\delta\in (k_j, k_j + n/2)$ as 
the unique solution of the equation $\Lambda(\delta) = \mu$. 
For simplicity we abbreviate $\Phi=\Phi(\ \cdot \ ,\delta)$.

\subsection{Polynomial Classes}
We begin the proof of \eqref{trhb} with studying polynomial functions. 
The following classes of polynomials on the interval $[0,1]$ will be relevant:
\begin{align}\label{polspaces}
\mathfrak{P}&
:=\lbrace p:[0,1]\mapsto\mathbb{C}, \text{ polynomial}\rbrace,\nonumber\\
\mathfrak{P}_0&
:=\lbrace p\in\mathfrak{P}: p(0)=p(1)=0\rbrace, \nonumber\\
\mathfrak{P}_s&
:=\lbrace p\in\mathfrak{P}: p(t)=p(1-t)\text{ for all }t\rbrace, \nonumber\\
\mathfrak{P}_{s,0}&:=\mathfrak{P}_s\cap \mathfrak{P}_0.
\end{align}
As explained in Remark \ref{lead:rem}, it suffices to prove \eqref{trhb} for 
the functions $h_1$ satisfying Condition \ref{h:cond}, 
such that $h_1(0) = h_1(1) = 0$. 
Thus we need to study polynomials $p\in \mathfrak P_0$. In fact, it is enough 
to consider a basis 
of $\mathfrak{P}_0$. As in \citep{LandauWidom} we choose the basis
\begin{align*}
\lbrace
(p_n, q_n):  p_n(t)=(t(1-t))^n, \ 
q_n(t) = t(t(1-t))^n, \ n = 1, 2, \dots\rbrace,
\end{align*}
and start by considering the 
symmetric elements $p_n(t)$, 
which form a basis of $\mathfrak{P}_{s,0}$. 
So, we study the operators
\begin{align*}
p_n(B_{\alpha, \mu}) = A_{\alpha, \mu}^n,\ \ 
A_{\alpha,\mu} = B_{\alpha,\mu}(\mathds{1}-B_{\alpha,\mu}).
\end{align*}
In so doing, we follow the strategy of 
\cite{LandauWidom}, where 
a similar problem was analysed in the unperturbed 
case $V = 0$. In fact, our objective is to reduce the calculations to the 
unperturbed case, by using Lemmas \ref{mean:lem} and 
\ref{replacefunctionbymeanhankel}. 

\subsection{Trace Class Calculus for the Operator $A_{\alpha,\mu}$}

Rewrite the operator $A_{\alpha, \mu}$ in the form 
\begin{align*}
A_{\alpha,\mu} = A_{\alpha,\mu}^- + A_{\alpha,\mu}^+
\end{align*}
with
\begin{align}\label{apm:eq}
\begin{cases}
A_{\alpha,\mu}^-:&=\chi_{(-\alpha, \alpha)} 
P_\mu \chi_{(-\infty,-\alpha)} P_\mu \chi_{(-\alpha, \alpha)},\\[0.2cm]
A_{\alpha,\mu}^+:&= \chi_{(-\alpha, \alpha)} P_\mu \chi_{(\alpha,\infty)} 
P_\mu \chi_{(-\alpha, \alpha)}.
\end{cases}
\end{align}
Now we perform various transformations with each of these operators that 
constitute ``small" perturbations in $\mathfrak S_1$. 
Thus, it is natural to adopt the following notational 
convention: 

\begin{definition}
Let $A$ and $B$ be bounded operators on 
${\mathrm L}^2(\mathbb{R})$. 
We write $A\sim B$ 
if $\|A-B\|_{\mathfrak{S}_1}\ll 1$, 
uniformly in $\alpha\gg 1$. 
We write $A\approx B$ if $A$ and $B$ are 
trace class and $|\tr A - \tr B|\ll 1$ uniformly in $\alpha\gg1$. 
\end{definition}

Clearly, for 
trace class operators $A, B$ the relation 
$A\sim B$ implies $A\approx B$, but not 
the other way round. Note also, that $A\sim B$ implies $A^n\sim B^n$ 
for any $n = 1, 2, \dots$.

To begin with, by virtue of Proposition 
\ref{proposition}\ref{Ibounded},
\begin{align}\label{Aplusshort}
A_{\alpha,\mu}^+\sim
\chi_{(-\alpha, \alpha)} 
P_\mu\chi_{(\alpha+1,\infty)}
P_\mu \chi_{(-\alpha, \alpha)}.
\end{align}
and
\begin{align}\label{Aminusshort}
A_{\alpha,\mu}^-\sim \chi_{(-\alpha, \alpha)} 
P_\mu\chi_{(-\infty,-\alpha-1)}P_\mu
\chi_{(-\alpha, \alpha)}.
\end{align}

\subsubsection{Operators $D_\alpha^{\pm}$} 
The next step is to replace $A_{\alpha, \mu}^{\pm}$ 
with operators 
that do not contain any information on 
the function $\Phi(x, k)$. 
These are the operators 
$D_\alpha^{\pm}:{\mathrm L}^2(\mathbb{R})\mapsto {\mathrm L}^2(\mathbb{R})$, 
defined via their integral kernels
\begin{align*}
D_{\alpha}^+(x,y)& := \frac{1}{4\pi^2}
\chi_{(-\alpha,\alpha)}(x)\chi_{(-\alpha,\alpha)}(y)
\int\limits_{\alpha+1}^\infty 
\frac{1}{(z-x)(z-y)}\, dz,\nonumber\\
D_{\alpha}^-(x,y)&:= \frac{1}{4\pi^2}
\chi_{(-\alpha,\alpha)}(x)\chi_{(-\alpha,\alpha)}(y)
\int\limits_{-\infty}^{-\alpha-1} \frac{1}{(z-x)(z-y)}\, dz.
\end{align*}
Note that $D_\alpha^+$ and $D_\alpha^-$ are 
unitarily equivalent via the change $x\mapsto -x$. 
The crucial fact is that the asymptotic formulas for the traces of 
powers $(D_\alpha^{\pm})^n$ can be easily deduced from the results of 
\cite{LandauWidom}:

\begin{lemma}\label{LW:lem} 
Let $p_n(t) = t^n(1-t)^n$, $n = 1, 2, \dots$. Then  
\begin{align}\label{LW:eq}
\tr (D_{\alpha}^{\pm})^n 
= \frac{1}{4}\log \alpha\  \mathcal W(p_n) + o(\log\alpha),\ \alpha\to \infty,
\end{align}
where $\mathcal W(\ \cdot\ )$ is as defined in \eqref{whc}.
\end{lemma}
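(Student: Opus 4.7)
The plan is to deduce the trace asymptotics of $(D_\alpha^\pm)^n$ from the Landau--Widom theorem applied to the \emph{free} case ($V\equiv 0$), via an explicit kernel comparison between $D_\alpha^+$ and the ``$+$'' piece $A^+_{\alpha,\mu,\text{free}}$ of the free defect operator. The first (easy) step is a reduction to the $+$ case: the reflection $V:x\mapsto-x$ satisfies $VD_\alpha^+V=D_\alpha^-$ (by the change of variable $z\mapsto-z$ in the defining integral), so $\tr(D_\alpha^-)^n=\tr(D_\alpha^+)^n$.

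Take $V\equiv 0$ and any $\mu>0$, $\delta=\sqrt{\mu}$. Then $P_\mu^{\text{free}}$ is the Paley--Wiener projection with kernel $\sin(\delta(x-y))/(\pi(x-y))$, so $B^{\text{free}}_{\alpha,\mu}$ is the classical time--band-limiting operator, and Landau--Widom yields $\tr A_{\alpha,\mu,\text{free}}^n=\mathcal W(p_n)\log\alpha+o(\log\alpha)$. Splitting $A_{\text{free}}=A^+_{\text{free}}+A^-_{\text{free}}$ and expanding the $n$th power, every mixed word contains a subfactor $\chi_{(\alpha,\infty)}P_\mu\chi_{(-\alpha,\alpha)}P_\mu\chi_{(-\infty,-\alpha)}$, uniformly bounded in trace norm by Proposition \ref{proposition}(\ref{IKoppositesidesofJ}) (opposite-sides condition). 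Since reflection symmetry forces $\tr(A^-_{\text{free}})^n=\tr(A^+_{\text{free}})^n$, one obtains $\tr(A^+_{\text{free}})^n=\tfrac{1}{2}\mathcal W(p_n)\log\alpha+o(\log\alpha)$.

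The key (harder) step is identifying $A^+_{\text{free}}$ with $D_\alpha^+$ at the kernel level. From \eqref{Aplusshort} and the explicit formula $\Pi_\mu^{\text{free}}=P_\mu^{\text{free}}$, applying $\sin a\sin b=\tfrac{1}{2}[\cos(a-b)-\cos(a+b)]$ with $a=\delta(x-z),\ b=\delta(z-y)$ splits the $z$-integral into a non-oscillatory part $2\cos(\delta(x-y))D_\alpha^+(x,y)$ and an oscillatory part of the form $S_{I,J,K}(f)$ with $I=K=(-\alpha,\alpha)$, $J=(\alpha+1,\infty)$, and $f(z)=e^{\pm2i\delta z}$; by Lemma \ref{replacefunctionbymeanhankel} (since $\mathcal M(f)=0$) this remainder is $O(1)$ in trace norm. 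Writing $2\cos(\delta(x-y))=e^{i\delta(x-y)}+e^{-i\delta(x-y)}$ one obtains $A^+_{\text{free}}\sim K_1+K_2$ with $K_j=M_{U^{\pm 1}}D_\alpha^+M_{U^{\mp 1}}$ and $U(x)=e^{i\delta x}$. In the expansion of $\tr(K_1+K_2)^n$, the two ``pure'' words $K_1^n$ and $K_2^n$ each contribute $\tr(D_\alpha^+)^n$ by cyclicity (using $M_UM_{\bar U}=1$), while every mixed word collapses, after merging adjacent $M_{U^{\pm 1}}$, into a trace involving nonzero oscillatory factors $M_{U^{\pm 2}}$ intercalated between copies of $D_\alpha^+$, which is $O(1)$ by an iterated use of Lemma \ref{replacefunctionbymeanhankel}. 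Hence $\tr(A^+_{\text{free}})^n=2\tr(D_\alpha^+)^n+O(1)$; combining with the previous paragraph gives $\tr(D_\alpha^+)^n=\tfrac{1}{4}\mathcal W(p_n)\log\alpha+o(\log\alpha)$.

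The main obstacle is the control of mixed words in the last expansion: Lemma \ref{replacefunctionbymeanhankel} as stated absorbs a single mean-zero factor inside one Cauchy-type integral, whereas mixed words carry several intercalated exponentials $M_{U^{\pm 2}}$ between the $D_\alpha^+$'s. One must unfold each $D_\alpha^+$ via its representation as an integral over $(\alpha+1,\infty)$ and apply the lemma in the resulting sub-integrals; alternatively one can deploy direct integration-by-parts (Riemann--Lebesgue-type) estimates on the explicit Cauchy--log kernel of $D_\alpha^+$, but the combinatorial bookkeeping is the technical heart of the proof.
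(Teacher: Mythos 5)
Your argument is correct in all essentials, but it takes a genuinely different route from the paper's. The paper's proof of Lemma~\ref{LW:lem} is a direct identification: $D_\alpha^+$ and $D_\alpha^-$ are unitarily equivalent by reflection, and after a shift by $\alpha+1$ followed by the reflection $x\mapsto-x$ the kernel of $D_\alpha^+$ becomes
\[
\frac{1}{4\pi^2}\chi_{(1,2\alpha+1)}(x)\chi_{(1,2\alpha+1)}(y)\int_0^\infty\frac{dz}{(z+x)(z+y)},
\]
which is exactly the kernel of the operator $K_c$ in \cite[p.~476]{LandauWidom}; formula \eqref{LW:eq} is then \cite[formula~(19)]{LandauWidom} verbatim. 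You instead re-derive the asymptotics of $D_\alpha^+$ from the Landau--Widom theorem for the free ($V\equiv0$) Paley--Wiener defect operator, via the product-to-sum identity on the kernel and the mean-zero machinery of Lemma~\ref{replacefunctionbymeanhankel}. This is longer, but it only invokes the main Landau--Widom asymptotics \eqref{en:eq} rather than an intermediate formula from the interior of their proof, and it previews, already in the free case, the averaging technique that Section~\ref{sect:proofpol} then applies in the periodic case.

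On the ``main obstacle'' you flag in the final paragraph: there is no obstacle, and your argument in fact closes. After merging adjacent multiplications and cyclically rotating the trace, a mixed word reads $\tr[M_{U^{c_0}}D_\alpha^+M_{U^{c_1}}\cdots M_{U^{c_{n-1}}}D_\alpha^+]$ with $c_j\in\{0,\pm2\}$ and at least one $c_j\ne0$; one need not absorb every intercalated exponential simultaneously. Write $D_\alpha^+=(Z_\alpha^+)^*Z_\alpha^+$ as in \eqref{Zalpha:eq} and rotate so the factor $Z_\alpha^+M_{U^{c_j}}(Z_\alpha^+)^*$ with some fixed $c_j\ne0$ sits at one end. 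That factor equals $-\tfrac{1}{4\pi^2}S_{I,J,K}(f)$ with $I=K=(\alpha+1,\infty)$, $J=(-\alpha,\alpha)$, $f(z)=e^{ic_j\delta z}$, and since $\mathcal M(f)=0$ Lemma~\ref{replacefunctionbymeanhankel} yields $\|Z_\alpha^+M_{U^{c_j}}(Z_\alpha^+)^*\|_1\ll1$; the remaining product has operator norm $\ll1$ uniformly in $\alpha$ (the $Z_\alpha^\pm$ are compressions of the Hilbert transform, and the $M_{U^c}$ are unitary). A single such extraction bounds the entire mixed trace by $O(1)$; this is the same trick the paper uses in the proof of Corollary~\ref{kalphan:cor}, so no further combinatorial bookkeeping is required.
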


\begin{proof}
Since $D_\alpha^+$ and $D_\alpha^-$ are unitarily equivalent, 
we show 
\eqref{LW:eq} for $D_\alpha := D_\alpha^+$ only. 
By translation and reflection, 
the operator $D_\alpha$ is unitarily equivalent to 
the operator with kernel 
\begin{align*}
\frac{1}{4\pi^2}
\chi_{(1,2\alpha+1)}(x)\chi_{(1,2\alpha+1)}(y)
\int\limits_{0}^\infty 
\frac{1}{(z+x)(z+y)}\, dz,
\end{align*}
This is the kernel of the operator which is denoted by $K_c$ 
in \cite[p. 476]{LandauWidom}. Thus the formula 
\eqref{LW:eq} immediately follows from \cite[formula (19), p. 477]{LandauWidom}.
\end{proof}

A useful way to write $D_{\alpha}^\pm$ is 
\begin{align*}
D_\alpha^{\pm} = (Z_{\alpha}^{\pm})^\ast Z_\alpha^{\pm}, 
\end{align*}
where $Z_\alpha^\pm$ have kernels 
\begin{align}\label{Zalpha:eq}
Z_\alpha^+(x, y) 
= \frac{\chi_{(\alpha+1, \infty)}(x)
\chi_{(-\alpha,\alpha)}(y)}{2\pi(x-y)},
\quad \textup{and}\ \ 
Z_\alpha^-(x, y)= 
\frac{\chi_{(-\infty, -\alpha-1)}(x)
\chi_{(-\alpha,\alpha)}(y)}{2\pi(x-y)}
\end{align}
respectively. 
Now we need to establish a few facts for operators $D_\alpha^{\pm}$ 
and $Z_\alpha^{\pm}$.  Recall that we abbreviate 
$\Phi = \Phi(x, \delta)$, 
$\delta = \delta(\mu)$, 
remembering that $\mu$ is  strictly inside the band  
$S$.

\begin{lemma}\label{avw:lem} 
Denote by $Y_\alpha^\pm$ 
any of the two operators $Z_{\alpha}^\pm$ or $(Z_\alpha^{\pm})^*$.
With the notation as above,
\begin{align*}
Y_\alpha^{\pm} |\Phi|^2 (Y_\alpha^{\pm})^*
\sim \frac{1}{2\pi} Y_\alpha^{\pm}(Y_\alpha^{\pm})^*,\ \quad
Y_\alpha^{\pm} \Phi^2 (Y_\alpha^{\pm})^*
\sim 0. 
\end{align*}
\end{lemma}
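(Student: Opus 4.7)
The plan is to reduce each of the four operators $Y_\alpha^{\pm} f (Y_\alpha^{\pm})^*$ with $f\in\{|\Phi|^2,\Phi^2\}$ to an integral of the type $S_{I,J,K}(f)$ from \eqref{prodker:eq} and then apply Lemma \ref{replacefunctionbymeanhankel} to replace $f$ by its almost-periodic mean. A direct computation from the kernels \eqref{Zalpha:eq} gives, for example,
\begin{align*}
Z_\alpha^+\, f\, (Z_\alpha^+)^*=\frac{1}{(2\pi)^2}S_{I,J,K}(f),\qquad I=K=(\alpha+1,\infty),\ J=(-\alpha,\alpha),
\end{align*}
with analogous formulas in the adjoint and $Z_\alpha^-$ cases; in each of the four configurations $\dist(I,J)=\dist(J,K)=1$, which satisfies the $\gg 1$ hypothesis of Lemma \ref{replacefunctionbymeanhankel}.

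Next, I would verify the almost-periodic expansion of $f$ and the summability \eqref{at:eq}. Writing $\Phi(x,\delta)=e^{-i\delta x}E(x,\delta)$ with $E(\ \cdot\ ,\delta)\in{\rm C}^\infty$ and $2\pi$-periodic in $x$ (which uses $V\in{\rm C}^\infty$), the function $|\Phi|^2=|E|^2$ is $2\pi$-periodic with rapidly decaying Fourier coefficients, while $\Phi^2=e^{-2i\delta x}E^2$ is almost periodic with frequencies $\{n-2\delta:n\in\mathbb Z\}$. In both cases \eqref{at:eq} is immediate: for $|\Phi|^2$ the nonzero frequencies are integers so the factor $|\theta|^{-1}$ is bounded by $1$; for $\Phi^2$, either $2\delta\notin\mathbb Z$ and the frequencies are bounded away from $0$, or $\delta=k_j+l/2$ for some $l\in\{1,\ldots,n-1\}$ (a subband touching point inside $S$), in which case the would-be zero-frequency term already vanishes by \eqref{meant:eq} and the remaining frequencies are again nonzero integers. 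By Lemma \ref{mean:lem}, $\mathcal M(|\Phi|^2)=1/(2\pi)$ and $\mathcal M(\Phi^2)=0$ throughout $S^\circ$.

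Combining these inputs via Lemma \ref{replacefunctionbymeanhankel}, and using the identity $Y_\alpha^\pm (Y_\alpha^\pm)^*=(2\pi)^{-2}S_{I,J,K}(1)$, I obtain $Y_\alpha^\pm|\Phi|^2(Y_\alpha^\pm)^*\sim(2\pi)^{-1}Y_\alpha^\pm(Y_\alpha^\pm)^*$ and $Y_\alpha^\pm\Phi^2(Y_\alpha^\pm)^*\sim 0$, as claimed. The one point that requires care is that in the two adjoint configurations $J$ is semi-infinite, so the integration-by-parts argument in the proof of Lemma \ref{replacefunctionbymeanhankel} produces a boundary term at $\pm\infty$; this term vanishes because the primitive $F$ of $f-\mathcal M(f)$ is bounded while the denominator $(z-x)(z-y)$ grows, so the lemma extends to this case without change. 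Beyond this mild technicality, the proof is essentially bookkeeping across the four configurations of $(I,J,K)$.
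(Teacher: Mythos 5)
Your proof is correct and follows essentially the same route as the paper: identify $Y_\alpha^\pm f(Y_\alpha^\pm)^*$ with $(4\pi^2)^{-1}S_{I,J,K}(f)$ for the appropriate intervals (with $\dist(I,J)=\dist(J,K)=1$), invoke Lemma \ref{replacefunctionbymeanhankel}, and substitute the mean values $\mathcal M(|\Phi|^2)=(2\pi)^{-1}$, $\mathcal M(\Phi^2)=0$ from Lemma \ref{mean:lem}; your explicit checks of condition \eqref{at:eq} and of the semi-infinite $J$ in the adjoint configurations are details the paper leaves to the reader. One small sign slip: by \eqref{Edef:eq}, $\Phi(x,\delta)=e^{i\delta x}E(x,\delta)$, not $e^{-i\delta x}E(x,\delta)$, but this has no bearing on the argument.
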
  
  
  \begin{proof} 
  We prove the lemma 
for the ``$+$" sign 
and for the case $Y_\alpha^+ = Z_\alpha^+$ only. The remaining cases 
are treated in the same way. 
For brevity we omit the superscript ``$+$" and write 
$Z_\alpha$ instead of $Z_\alpha^+$.  

The operator $Z_\alpha f Z_\alpha^\ast$ 
coincides with the operator $(4\pi^2)^{-1}S_{I, J, K}(f)$ 
with
\begin{align*}
I = K =(\alpha+1, \infty), 
J = (-\alpha, \alpha),
\end{align*} 
see the definition \eqref{prodker:eq}. Thus by Lemma 
\ref{replacefunctionbymeanhankel}, 
\begin{align*}
Z_\alpha f Z_\alpha^\ast
\sim \mathcal{M}(f) Z_\alpha Z_\alpha^\ast.
\end{align*}
In view of \eqref{mean:eq} and \eqref{mean0:eq}, 
$\mathcal M(|\Phi|^2)= (2\pi)^{-1}$ and $\mathcal M(\Phi^2) = 0$, whence 
the claimed result. 
\end{proof}
  
\begin{corollary}\label{kalphan:cor} 
Let
\begin{align}\label{kalphan:eq}
K_{\alpha, n}^\pm = 2\pi  
\big[
\Phi (D_\alpha^{\pm})^n \conjugate{\Phi}
+ \conjugate{\Phi} (D_\alpha^{\pm})^n \Phi \big],\ n = 1, 2, \dots.
\end{align}
Then for all $n = 1, 2, \dots$, we have
\begin{align}\label{powersofKalphaj}
(K_{\alpha, 1}^\pm)^n \sim  K_{\alpha,n}^{\pm},
\end{align}
and 
\begin{align}\label{trkalpha:eq}
(K_{\alpha, 1}^{\pm})^n \approx 2 (D_\alpha^{\pm})^n,\ \alpha\to\infty. 
\end{align}
\end{corollary}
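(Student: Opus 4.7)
The plan is to exploit the factorisation $D_\alpha^\pm = (Z_\alpha^\pm)^* Z_\alpha^\pm$ so that every occurrence of $|\Phi|^2$, $\Phi^2$, or $\conjugate{\Phi}^2$ inside a product can be displayed as an inner sandwich $Z_\alpha^\pm (\,\cdot\,)(Z_\alpha^\pm)^*$ to which Lemma \ref{avw:lem} applies. The argument rests on the uniform boundedness (in $\alpha$) of $\|\Phi\|_\infty$, of $\|Z_\alpha^\pm\|$ (the truncated Hilbert-type kernels \eqref{Zalpha:eq} have uniformly bounded $L^2$-norm), and hence of $\|D_\alpha^\pm\|$ and $\|K_{\alpha,1}^\pm\|$. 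This boundedness ensures that the relation $A\sim B$ is preserved under left or right multiplication by such operators, which is the central bookkeeping device.

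To prove \eqref{powersofKalphaj} I would argue by induction on $n$, the case $n=1$ being immediate. For the step, the bound $\|K_{\alpha,1}^\pm\|\ll 1$ combined with the inductive hypothesis gives $(K_{\alpha,1}^\pm)^{n+1}\sim K_{\alpha,n}^\pm K_{\alpha,1}^\pm$, so it remains to analyse this product. Expanding yields four terms of the form $(2\pi)^2\,\Psi_1 (D_\alpha^\pm)^n \Psi_2\, D_\alpha^\pm\, \Psi_3$: two ``diagonal'' terms with $\Psi_2 = |\Phi|^2$ and $(\Psi_1,\Psi_3)\in\{(\Phi,\conjugate{\Phi}),(\conjugate{\Phi},\Phi)\}$, and two ``cross'' terms with $\Psi_2\in\{\Phi^2,\conjugate{\Phi}^2\}$. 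Writing $(D_\alpha^\pm)^n = (D_\alpha^\pm)^{n-1}(Z_\alpha^\pm)^* Z_\alpha^\pm$ and $D_\alpha^\pm = (Z_\alpha^\pm)^* Z_\alpha^\pm$ exhibits $\Psi_2$ inside the sandwich $Z_\alpha^\pm \Psi_2 (Z_\alpha^\pm)^*$. Lemma \ref{avw:lem} replaces this sandwich by $\frac{1}{2\pi}Z_\alpha^\pm (Z_\alpha^\pm)^*$ when $\Psi_2=|\Phi|^2$ and by zero in the $\sim$-sense when $\Psi_2\in\{\Phi^2,\conjugate{\Phi}^2\}$ (the $\conjugate{\Phi}^2$ case following from the $\Phi^2$ case by taking adjoints). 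The cross terms therefore vanish in $\sim$, while the diagonal terms combine to $2\pi\bigl[\Phi(D_\alpha^\pm)^{n+1}\conjugate{\Phi} + \conjugate{\Phi}(D_\alpha^\pm)^{n+1}\Phi\bigr] = K_{\alpha,n+1}^\pm$, closing the induction.

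For \eqref{trkalpha:eq} I would first invoke \eqref{powersofKalphaj}, which reduces the claim to $K_{\alpha,n}^\pm \approx 2(D_\alpha^\pm)^n$. Cyclicity of the trace, together with the fact that the two summands of $K_{\alpha,n}^\pm$ have equal traces, gives $\tr K_{\alpha,n}^\pm = 4\pi\tr\bigl(|\Phi|^2(D_\alpha^\pm)^n\bigr)$. Factoring $(D_\alpha^\pm)^n = (Z_\alpha^\pm)^*\bigl(Z_\alpha^\pm(Z_\alpha^\pm)^*\bigr)^{n-1} Z_\alpha^\pm$ and applying cyclicity once more yields
\begin{align*}
\tr\bigl(|\Phi|^2(D_\alpha^\pm)^n\bigr) = \tr\Bigl(\bigl[Z_\alpha^\pm|\Phi|^2(Z_\alpha^\pm)^*\bigr]\bigl(Z_\alpha^\pm(Z_\alpha^\pm)^*\bigr)^{n-1}\Bigr).
\end{align*}
Lemma \ref{avw:lem} then replaces the bracket by $\frac{1}{2\pi} Z_\alpha^\pm(Z_\alpha^\pm)^*$ modulo a trace-class error, and $\bigl(Z_\alpha^\pm(Z_\alpha^\pm)^*\bigr)^{n-1}$ has uniformly bounded operator norm, so the right-hand side equals $\frac{1}{2\pi}\tr\bigl(Z_\alpha^\pm(Z_\alpha^\pm)^*\bigr)^n + O(1) = \frac{1}{2\pi}\tr(D_\alpha^\pm)^n + O(1)$. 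Hence $\tr K_{\alpha,n}^\pm \approx 2\tr(D_\alpha^\pm)^n$, yielding \eqref{trkalpha:eq}.

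The main technical point I anticipate is precisely the bookkeeping: verifying at each $\sim$-replacement that the flanking operators carry $\alpha$-uniform norm bounds, which forces the slightly delicate rewrite of $(D_\alpha^\pm)^n$ needed to place $|\Phi|^2$ in the correct middle position. The role of the factorisation $D_\alpha^\pm=(Z_\alpha^\pm)^*Z_\alpha^\pm$ is to make this systematic; the arithmetic input is entirely encoded in $\mathcal M(|\Phi|^2)=(2\pi)^{-1}$ and $\mathcal M(\Phi^2)=0$ via Lemma \ref{avw:lem}.
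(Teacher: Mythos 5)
Your proof is correct and follows essentially the same route as the paper's: both rest on the factorisation $D_\alpha^\pm=(Z_\alpha^\pm)^*Z_\alpha^\pm$, the resulting sandwich $Z_\alpha^\pm f(Z_\alpha^\pm)^*$ to which Lemma \ref{avw:lem} applies with $\mathcal M(|\Phi|^2)=(2\pi)^{-1}$ and $\mathcal M(\Phi^2)=0$, and cyclicity of the trace for \eqref{trkalpha:eq}. The only differences are presentational: you organise \eqref{powersofKalphaj} by induction rather than by expanding $K_{\alpha,1}^n$ in one go, and you collapse the paper's $n=1$ / $n\ge 2$ case split for \eqref{trkalpha:eq} into a single rewriting $(D_\alpha^\pm)^n=(Z_\alpha^\pm)^*(Z_\alpha^\pm(Z_\alpha^\pm)^*)^{n-1}Z_\alpha^\pm$.
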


\begin{proof}  For brevity we 
omit the superscript ``$\pm$" and write 
$K_{\alpha, 1}, D_\alpha$ instead of $K_{\alpha, 1}^{\pm}, D_\alpha^{\pm}$ etc. 
The powers of $K_{\alpha, 1}$ 
contain  terms of the form $D_\alpha f D_\alpha$ 
with $f = |\Phi|^2,\ \Phi^2$ or ${\conjugate\Phi}^2$. 
The operator $D_\alpha f D_\alpha$, 
is  written as
\begin{align*}
Z_\alpha^\ast Z_\alpha f Z_\alpha^\ast Z_\alpha.
\end{align*} 
Thus by Lemma \ref{avw:lem}, 
\begin{align*}
K_{\alpha, 1}^n \sim (2\pi)^n \big[
(\Phi D_\alpha \conjugate{\Phi})^n
+ (\conjugate{\Phi} D_\alpha \Phi)^n
\big]\sim 2\pi \big[
\Phi D_\alpha^n \conjugate{\Phi}
+ \conjugate{\Phi} D_\alpha^n \Phi 
\big], 
\end{align*}
as claimed.

In order to prove \eqref{trkalpha:eq}, 
use the cyclicity of the trace. 
If $n=1$, then, again by Lemma \ref{avw:lem},  
\begin{align*}
 \Phi D_\alpha \conjugate{\Phi} \approx  Z_\alpha |\Phi|^2 Z_\alpha^* \sim 
\frac{1}{2\pi} Z_\alpha  Z_\alpha^*
\approx \frac{1}{2\pi} D_\alpha.
\end{align*}
If $n\ge 2$, then, in the same way,
\begin{align*}
 \Phi D_\alpha^n  \conjugate{\Phi}
\approx Z_{\alpha}D_\alpha^{n-2} Z_\alpha^* Z_{\alpha }|\Phi|^2 Z_{\alpha}^*
\sim \frac{1}{2\pi} Z_{\alpha}D_\alpha^{n-2} Z_\alpha^* Z_{\alpha }Z_{\alpha}^*
\approx \frac{1}{2\pi} D_\alpha^{n}
\end{align*} 
The same is done with the component containing $\Phi$ and 
$\conjugate\Phi$ in the other order. This implies 
\eqref{trkalpha:eq}. 
Thus the proof is complete.
\end{proof}

\subsubsection{Approximating Operators $A_{\alpha, \mu}^{\pm}$}
Assume that $\mu$ is as before and $K_{\alpha, n}^{\pm}$ are 
as defined in \eqref{kalphan:eq}. 

\begin{lemma}\label{aalphapmkapm}
Let $S$ be a band of the spectrum of $H$, and let 
$\mu\in S^\circ$. 
Let $\delta\in (k_j, k_j + n/2)$ be the unique solution of the equation 
$\Lambda(\delta) = \mu$. Then we have
\begin{align}\label{atok:eq}
(A_{\alpha,\mu}^\pm)^n\sim (K_{\alpha, 1}^\pm)^n,
\end{align}
and
\begin{align}\label{aaplus2:eq}
A_{\alpha, \mu}^n 
\sim (A_{\alpha,\mu}^+)^n + (A_{\alpha,\mu}^-)^n 
\sim (K_{\alpha, 1}^{+})^n + (K_{\alpha, 1}^{-})^n.
\end{align}
for every $n = 1, 2, \dots$.
\end{lemma}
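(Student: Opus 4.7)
The plan is to establish \eqref{atok:eq} first for $n=1$ by a direct kernel computation, then bootstrap to general $n$ using uniform operator-norm bounds, and finally deduce \eqref{aaplus2:eq} by expanding $A_{\alpha,\mu}^n=(A_{\alpha,\mu}^++A_{\alpha,\mu}^-)^n$ and discarding mixed products via Proposition~\ref{proposition}. For $n=1$ and the ``$+$'' case, I first invoke \eqref{Aplusshort} to replace $\chi_{(\alpha,\infty)}$ by $\chi_{(\alpha+1,\infty)}$, so that $I=K=(-\alpha,\alpha)$ and $J=(\alpha+1,\infty)$ are pairwise disjoint. Then Lemma~\ref{ptomu:lem} (more precisely the argument of Lemma~\ref{ptomu:lem}) allows me to substitute the approximate kernel $\Pi_\mu$ for $P_\mu$ in each slot modulo $\sim 0$. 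Inserting the explicit form \eqref{leadpmu:eq} of $\Pi_\mu$ and multiplying out, I obtain four summands of the shape $M_{f_1}\, S_{I,J,K}(g)\, M_{f_2}$ with $f_1,f_2\in\{\Phi,\conjugate{\Phi}\}$ and $g\in\{|\Phi|^2,\Phi^2,\conjugate{\Phi}^2\}$, where $M_f$ denotes multiplication by $f$ and $\Phi=\Phi(\,\cdot\,,\delta)$.

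Since $V\in C^\infty(\mathbb R)$, the function $\Phi$ is $C^\infty$ and $2\pi$-periodic in $x$, so each $g$ lies in $\mathrm{CAP}(\mathbb R)$ with rapidly decaying Fourier coefficients and in particular satisfies \eqref{at:eq}. As $\dist(I,J)=\dist(J,K)=1\gg 1$, Lemma~\ref{replacefunctionbymeanhankel} applies and replaces each $g$ by its mean $\mathcal M(g)$ modulo $\sim 0$. Lemma~\ref{mean:lem}, combined with the assumption $\mu\in S^\circ$ (which forces $\delta\neq k_j$ and $\delta\neq k_j+n/2$), gives $\mathcal M(|\Phi|^2)=1/(2\pi)$ and $\mathcal M(\Phi^2)=\mathcal M(\conjugate{\Phi}^2)=0$, so the two ``off-diagonal'' summands are $\sim 0$ and the remaining two combine, via the identification $(4\pi^2)^{-1}S_{I,J,K}(1)=D_\alpha^+$, into exactly $K_{\alpha,1}^+$; the ``$-$'' case is handled identically with $J=(-\infty,-\alpha-1)$. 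To extend to $n\ge 2$, note that $\|A_{\alpha,\mu}^\pm\|\le 1$ since $A_{\alpha,\mu}^\pm=C^*C$ with $C=\chi_{(\pm\alpha,\pm\infty)}P_\mu\chi_{(-\alpha,\alpha)}$; consequently $\|K_{\alpha,1}^\pm\|\le 1+O(1)$ by the $\sim$-relation just proved. Writing $A^n-K^n=\sum_{k=0}^{n-1}A^{n-1-k}(A-K)K^k$ and applying the ideal property of $\mathfrak S_1$ yields \eqref{atok:eq}.

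For \eqref{aaplus2:eq} I expand $A_{\alpha,\mu}^n$ into $2^n$ products indexed by $\epsilon\in\{+,-\}^n$; the two constant sequences reproduce $(A_{\alpha,\mu}^+)^n+(A_{\alpha,\mu}^-)^n$, while any non-constant $\epsilon$ contains two consecutive factors of opposite sign. Such a pair contains the block $\chi_{(\pm\alpha,\pm\infty)}P_\mu\chi_{(-\alpha,\alpha)}P_\mu\chi_{(\mp\alpha,\mp\infty)}$, whose trace norm is $\ll 1$ by Proposition~\ref{proposition}\ref{IKoppositesidesofJ}, since the outer intervals lie on opposite sides of $(-\alpha,\alpha)$. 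All remaining factors are bounded in operator norm by $1$, so the mixed product is $\sim 0$, giving the first relation in \eqref{aaplus2:eq}; combining with \eqref{atok:eq} yields the second. The main obstacle is the kernel computation underpinning $A_{\alpha,\mu}^+\sim K_{\alpha,1}^+$: one must correctly identify which of the four bilinear combinations of $\Phi$ and $\conjugate{\Phi}$ emerging from $\Pi_\mu\chi_J\Pi_\mu$ survive the almost-periodic averaging and which are killed by it, and this balancing act relies simultaneously on both identities of Lemma~\ref{mean:lem} and on the interior condition $\mu\in S^\circ$ that excludes the band-edge values of $\delta$.
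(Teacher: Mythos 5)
Your proposal is correct and follows essentially the same route as the paper: reduce to $n=1$, pass from $P_\mu$ to $\Pi_\mu$ via Lemma~\ref{ptomu:lem}, expand $\Pi_\mu\chi_J\Pi_\mu$ into the four bilinear terms, kill the $\Phi^2$/$\conjugate{\Phi}^2$ terms and replace $|\Phi|^2$ by $(2\pi)^{-1}$ using the averaging Lemmas~\ref{replacefunctionbymeanhankel} and~\ref{mean:lem} (the paper packages this step as Lemma~\ref{avw:lem}), lift to $n\ge 2$ by the telescoping/operator-norm argument, and discard mixed products in the binomial expansion of $A_{\alpha,\mu}^n$ via Proposition~\ref{proposition}\ref{IKoppositesidesofJ}. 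One minor slip: $\Phi(\,\cdot\,,\delta)=e^{i\delta\,\cdot}E(\,\cdot\,,\delta)$ is only quasi-periodic rather than $2\pi$-periodic (unless $\delta\in\mathbb Z$), but the conclusion you actually need --- that $|\Phi|^2$, $\Phi^2$, $\conjugate{\Phi}^2$ lie in $\mathrm{CAP}(\mathbb R)$ with rapidly decaying coefficients satisfying \eqref{at:eq} --- does hold, so the argument is unaffected.
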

 
\begin{proof}
To prove \eqref{atok:eq} it suffices to consider the case $n=1$. 
As before, we do it for $A_{\alpha, \mu}^+$ only, omitting the 
superscript ``$+$".  From \eqref{Aplusshort} 
and Lemma \ref{ptomu:lem} it follows that 
\begin{align*}
A_{\alpha,\mu}^+ 
\sim \chi_{(-\alpha, \alpha)} \Pi_\mu 
\chi_{(\alpha+1,\infty)}\Pi_\mu\chi_{(-\alpha, \alpha)}.
\end{align*}
By \eqref{leadpmu:eq} and \eqref{Zalpha:eq},
\begin{align*}
\chi_{(\alpha+1, \infty)} \Pi_\mu \chi_{(-\alpha, \alpha)}
= &\ - 2\pi i \big( \Phi Z_\alpha \conjugate\Phi 
-  \conjugate\Phi  Z_\alpha \Phi 
\big),\\
\chi_{(-\alpha, \alpha)} \Pi_\mu 
\chi_{(\alpha+1, \infty)}
= &\ 2\pi i
\big( \Phi Z_\alpha^* \conjugate\Phi 
-  \conjugate\Phi  Z_\alpha^* \Phi 
\big),
\end{align*}
so that 
\begin{align*}
A_{\alpha,\mu}^+ 
\sim &\ 
4\pi^2\big(
\Phi 
Z_\alpha^* |\Phi|^2 Z_\alpha 
\conjugate{\Phi} + 
\conjugate{\Phi} Z_\alpha^* |\Phi|^2 Z_\alpha {\Phi}\big)\\
&\ - 4\pi^2 \big(
{\Phi}Z_\alpha^* {\conjugate\Phi}^2 Z_\alpha \Phi + 
\conjugate{\Phi}Z_\alpha^* \Phi^2 Z_\alpha \conjugate\Phi\big).
\end{align*}
Consequently, by Lemma \ref{avw:lem},
\begin{align*}
A_{\alpha,\mu}^+ 
\sim 
2\pi\big({\Phi} Z_\alpha^* Z_\alpha \conjugate{\Phi} + 
\conjugate{\Phi} 
Z_\alpha^* Z_\alpha {\Phi}\big) = K_{\alpha, 1},
\end{align*}
as required. 

Proof of \eqref{aaplus2:eq}. 
By the definition \eqref{apm:eq}, 
\begin{align*}
A_{\alpha,\mu}^-A_{\alpha,\mu}^+ 
= \chiab P_\mu \Big(\chi_{(-\infty,-\alpha)}
P_\mu 
\chi_{(-\alpha, \alpha)}  
P_\mu \chi_{(\alpha,\infty)}\Big)P_\mu
\chi_{(-\alpha, \alpha)}.
\end{align*}
By Proposition \ref{proposition}(\ref{IKoppositesidesofJ}), 
the trace norm of the operator in the middle is uniformly bounded, and hence 
$A_{\alpha,\mu}^-A_{\alpha,\mu}^+\sim 0$. In the same way one checks that 
$A_{\alpha,\mu}^+A_{\alpha,\mu}^-\sim 0$. Thus 
\begin{align*}
A_{\alpha,\mu}^n\sim (A_{\alpha,\mu}^+)^n + (A_{\alpha,\mu}^-)^n,
\end{align*}
and \eqref{aaplus2:eq} is now a consequence of 
\eqref{atok:eq}.  
\end{proof}

\subsection{Proof of Theorem \ref{theorem} for Symmetric Polynomials}
 By \eqref{aaplus2:eq}, \eqref{trkalpha:eq} and \eqref{LW:eq}, 
 \begin{align}\label{powernhankeleq}
 \tr A_{\alpha, \mu}^n = &\ \tr (K_{\alpha}^+)^n + 
 \tr  (K_{\alpha}^-)^n + O(1)\notag\\[0.2cm]
 = &\ 2\tr (D_{\alpha}^+)^n + 
 2\tr  (D_{\alpha}^-)^n + O(1)\notag\\[0.2cm]
= &\ \log (\alpha) \mathcal W(p_n) + o(\log(\alpha)),\ n=1, 2, \dots.
 \end{align}
Hence, Theorem \ref{theorem} for polynomials 
$p\in \mathfrak P_{s, 0}$ follows from the identity
$p_n(B_{\alpha, \mu}) = A_{\alpha, \mu}^n$. 
\qed

\section{Proof of Theorem 
\ref{theorem}: conclusion}\label{sect:proofconcl}

As above, we assume that $\mu\in S^\circ$, where $S$ is a band of the type 
\eqref{properband:eq}.

\subsection{Arbitrary Polynomials}\label{ap:subsect}
So far we have proved Theorem \ref{theorem} 
for polynomials $p\in\mathfrak{P}_{s,0}$ 
(cf. \eqref{polspaces} for notation). 
To extend this result to arbitrary 
$p\in\mathfrak{P}_0$ 
 it remains to treat basis elements of the form 
 $q_n(t) = t[t(1-t)]^n$, $n = 1, 2, \dots$. 
Following \citep{LandauWidom} for the free case, 
 this is done by a symmetry argument 
 that reduces 
 $\tr\big[B_{\alpha,\mu}A_{\alpha,\mu}^n\big]$ to 
 $\tr A_{\alpha,\mu}^n$. 
 
\begin{lemma}\label{btoone:lem}
For every $n = 1, 2, \dots,$ we have 
\begin{align}\label{btoone:eq}
B_{\alpha,\mu}\big(A_{\alpha,\mu}\big)^n
\approx \frac{1}{2}\tr\big(A_{\alpha,\mu}\big)^n, 
\end{align}
as $\alpha\to\infty$.
\end{lemma}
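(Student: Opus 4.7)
My approach is to reduce $\tr B_{\alpha,\mu}A^n_{\alpha,\mu}$ to the ``free-type'' operators $D_\alpha^\pm$ via the chain of approximations from Section~\ref{sect:proofpol}, and then extract the factor $\tfrac{1}{2}$ using an approximate ``half-occupation'' identity at the Fermi energy. First, by chaining $A_{\alpha,\mu}^n \sim (A_{\alpha,\mu}^+)^n + (A_{\alpha,\mu}^-)^n \sim (K_{\alpha,1}^+)^n + (K_{\alpha,1}^-)^n \sim K_{\alpha,n}^+ + K_{\alpha,n}^-$ (Lemma~\ref{aalphapmkapm} and Corollary~\ref{kalphan:cor}), and using that $\|B_{\alpha,\mu}\|\leq 1$ so multiplication by $B_{\alpha,\mu}$ preserves an $O(1)$ trace-norm error, I get
\begin{align*}
\tr B_{\alpha,\mu}A^n_{\alpha,\mu} = \tr B_{\alpha,\mu}K_{\alpha,n}^+ + \tr B_{\alpha,\mu}K_{\alpha,n}^- + O(1).
\end{align*}
Expanding $K_{\alpha,n}^\pm = 2\pi[\Phi(D_\alpha^\pm)^n\bar\Phi + \bar\Phi(D_\alpha^\pm)^n\Phi]$, cycling the trace, and noting that $(D_\alpha^\pm)^n$ has kernel supported in $(-\alpha,\alpha)^2$, this rewrites as
\begin{align*}
\tr B_{\alpha,\mu}K_{\alpha,n}^\pm = 2\pi\tr\bigl((D_\alpha^\pm)^n Q\bigr), \qquad Q := \bar\Phi B_{\alpha,\mu}\Phi + \Phi B_{\alpha,\mu}\bar\Phi.
\end{align*}

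The core of the argument is the estimate
\begin{align*}
\tr\bigl((D_\alpha^\pm)^n[Q - |\Phi|^2]\bigr) = O(1).
\end{align*}
The key heuristic is that $\Phi=\Phi(\cdot,\delta)$ is a Bloch function at the Fermi energy $\mu=\Lambda(\delta)$, so morally ``$P_\mu\Phi \approx \tfrac{1}{2}\Phi$'', giving $Q\approx|\Phi|^2$. To make this rigorous, I use that, modulo contributions vanishing in the trace with $(D_\alpha^\pm)^n$,
\begin{align*}
Q - |\Phi|^2 = \bar\Phi(P_\mu-\tfrac{1}{2})\Phi + \Phi(P_\mu-\tfrac{1}{2})\bar\Phi,
\end{align*}
then decompose $P_\mu = \Pi_\mu + R_\mu$ via Lemma~\ref{ptopi:lem}, control the $R_\mu$-contribution by \eqref{rmubound:eq} combined with Lemma~\ref{integralrankoneoperators}, and reduce the $\Pi_\mu$-contribution to a singular-integral kernel whose $x,y$-dependence factors through the almost-periodic expressions $\Phi^2$, $\bar\Phi^2$, $|\Phi|^2$. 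The averaging of Lemma~\ref{replacefunctionbymeanhankel} together with the mean identities $\mathcal M(|\Phi|^2) = (2\pi)^{-1}$, $\mathcal M(\Phi^2) = 0$ from Lemma~\ref{mean:lem} (which apply since $\delta$ lies in the interior of the band $S$) replaces these factors by their means. What remains is a translation-invariant Wiener--Hopf kernel, for which the required $O(1)$ bound is essentially the free-case identity of~\cite{LandauWidom}.

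Combining the core estimate with the fact that $\tr(D_\alpha^\pm)^n|\Phi|^2 = \tfrac{1}{2\pi}\tr(D_\alpha^\pm)^n + O(1)$, a direct consequence of Lemma~\ref{avw:lem} obtained by the same cycling used in the proof of Corollary~\ref{kalphan:cor}, yields $\tr B_{\alpha,\mu}K_{\alpha,n}^\pm = \tr(D_\alpha^\pm)^n + O(1)$. By \eqref{trkalpha:eq} this is $\tfrac{1}{2}\tr K_{\alpha,n}^\pm + O(1)$, so summing over $\pm$ and invoking \eqref{powernhankeleq} gives~\eqref{btoone:eq}. The main obstacle is the core estimate: turning the ``half-occupation'' heuristic into a rigorous trace-class statement requires a careful interplay between the singular kernel of $\Pi_\mu$, the almost-periodic averaging, and the Schatten--von Neumann pseudo-differential bounds of Section~\ref{sect:schattenest}, and ultimately reduces to a free Wiener--Hopf computation going back to~\cite{LandauWidom}.
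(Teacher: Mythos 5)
Your approach diverges genuinely from the paper's. The paper proves Lemma~\ref{btoone:lem} purely algebraically: it conjugates $A_{\alpha,\mu}^{\pm}$ by unitary lattice shifts into operators $T_{\alpha,\mu}^{\pm}$, then establishes the key identity $P_\mu(T_{\alpha,\mu}^{\pm})^n\approx(\mathds{1}-P_\mu)(T_{\alpha,\mu}^{\mp})^n$ (Lemma~\ref{auxiliarylemmaasymmetric}) by manipulating $P_\mu\chi^\mp P_\mu=-P_\mu\chi^\mp(\mathds{1}-P_\mu)$, cyclicity, and the uniform trace bounds of Proposition~\ref{proposition}; averaging this identity over $\pm$ produces the factor $\tfrac12$. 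You instead try to manufacture the $\tfrac12$ locally, at the level of the Bloch function $\Phi$ at Fermi energy, via a ``half-occupation'' statement. Your reduction $\tr B_{\alpha,\mu}A^n_{\alpha,\mu}=2\pi\,\tr(D_\alpha^+)^nQ+2\pi\,\tr(D_\alpha^-)^nQ+O(1)$ with $Q=\bar\Phi P_\mu\Phi+\Phi P_\mu\bar\Phi$ is correct, and by the lemma itself the ``core estimate'' $\tr(D_\alpha^\pm)^n[Q-|\Phi|^2]=O(1)$ must hold. But the sketch you give for proving it has real gaps.

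First, after writing $P_\mu=\Pi_\mu+R_\mu$, you claim the $R_\mu$-contribution is controlled by~\eqref{rmubound:eq} and Lemma~\ref{integralrankoneoperators}. The operator $\bar\Phi R_\mu\Phi+\Phi R_\mu\bar\Phi$ has kernel $O((1+|x-y|^2)^{-1})$, so it is \emph{bounded} but not trace class, and the crude estimate $|\tr(D_\alpha^\pm)^n X|\le\|(D_\alpha^\pm)^n\|_1\|X\|_\infty$ only yields $O(\log\alpha)$. Indeed this contribution is \emph{not} individually $O(1)$: there is a delicate cancellation against the term coming from $-\tfrac12\mathds{1}$ (and against the $\Pi_\mu$-part), and the naive splitting destroys it. In the free case $V\equiv0$ one has $R_\mu\equiv0$, and the $\Pi_\mu$-term and the $-\tfrac12$-term each contribute $\pm\frac{1}{2\pi}\tr(D_\alpha^\pm)^n\sim\pm\frac{1}{2\pi}\log\alpha$; only their \emph{sum} reduces to $\tr(D_\alpha^\pm)^n\chi_{|\xi|>2k_F}(\hat p)=O(1)$. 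Your decomposition does not keep these terms together. Second, Lemma~\ref{replacefunctionbymeanhankel} cannot be applied to the $\Pi_\mu$-contribution as you describe: that lemma replaces $f$ by $\mathcal M(f)$ only for kernels with the $S_{I,J,K}$ structure and $\dist(I,J),\dist(J,K)\gg1$, whereas $\bar\Phi\Pi_\mu\Phi+\Phi\Pi_\mu\bar\Phi$ has its relevant structure on the diagonal $x=y$, exactly where that separation fails. Third, the ``half-occupation'' heuristic $P_\mu\Phi\approx\tfrac12\Phi$ is not a well-posed statement since $\Phi$ is not in ${\rm L}^2$, and the claim that ``what remains is a translation-invariant Wiener--Hopf kernel, for which the required $O(1)$ bound is essentially the free-case identity of~\cite{LandauWidom}'' is not pointing to any specific result: \cite{LandauWidom} contains no statement of the form $\tr(D_\alpha^\pm)^n[\cdots]=O(1)$ that you could quote.

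So what you have is a legitimate reformulation of the goal (the ``core estimate''), not a proof of it. To carry it through you would essentially have to redo, in this local kernel language, the work that the paper's translation-symmetry argument does in one algebraic stroke. I would recommend looking at the mechanism in Lemma~\ref{auxiliarylemmaasymmetric}: the relation $P_\mu(T^{\pm})^n\approx(\mathds{1}-P_\mu)(T^{\mp})^n$ uses only the lattice-translation invariance $(U_\alpha^{\pm})^*P_\mu U_\alpha^{\pm}=P_\mu$, the algebraic identity $P\chi P=-P\chi(\mathds{1}-P)+P\chi$, cyclicity of the trace, and Proposition~\ref{proposition} for the error terms; it requires no kernel expansion, no averaging, and no half-occupation heuristic.
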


Compared to \cite{LandauWidom}, the proof requires some extra work. 
The main difference is that instead of the reflection symmetry 
used in \cite{LandauWidom}, we use the translation symmetry of the operators. 
The operators $A_{\alpha, \mu}^+$ and $A_{\alpha, \mu}^-$(see 
\eqref{apm:eq}) are considered separately. Applying Proposition \ref{proposition}
(\ref{KJ:item}), we get 
\begin{align}\label{Aeq:eq}
A_{\alpha, \mu}^+\sim \chi_{(-\alpha, \alpha)} 
P_\mu \chi_{(\alpha,3\alpha)}P_\mu  \chi_{(-\alpha, \alpha)}.
\end{align}
Let $U_\alpha^{\pm}$ be 
the unitary shift operators defined by 
\begin{align*}
U_\alpha^{\pm} f(x) = f(x \mp \alpha_0),\ 
\alpha_0 = 2\pi \left\lfloor \frac{\alpha}{2\pi}
\right\rfloor. 
\end{align*}
The equivalence \eqref{Aeq:eq} implies that 
\begin{align}\label{atopret:eq}
(U_\alpha^+)^* A_{\alpha, \mu}^+
U_\alpha^+\sim \chi_{(-2\alpha, 0)}
P_\mu\chi_{(0,2\alpha)}P_\mu\chi_{(-2\alpha, 0)}.
\end{align}
Indeed, \eqref{Aeq:eq} yields:
\begin{align*}
(U_{\alpha}^+)^* A_{\alpha, \mu}^+U_\alpha^+
\sim \chi_{(-\alpha-\alpha_0, \alpha-\alpha_0)}
P_\mu\chi_{(\alpha-\alpha_0,3\alpha-\alpha_0)}
P_\mu\chi_{(-\alpha-\alpha_0, \alpha-\alpha_0)},
\end{align*}
since $(U_{\alpha}^+)^* P_\mu U_{\alpha}^+ = P_\mu$. 
Now, to get \eqref{atopret:eq}, one needs to use 
repeatedly Proposition \ref{proposition}\ref{Ibounded}, \ref{Itbounded}.
 We denote 
\begin{align*}
\chi_\alpha^+ = \chi_{(0, 2\alpha)},\ 
\chi_\alpha^- = \chi_{(-2\alpha, 0)}
\end{align*}
and 
\begin{align*}
T_{\alpha,\mu}^{\pm}:=\chi_\alpha^{\mp}P_\mu\chi_\alpha^{\pm}P_\mu\chi_\alpha^{\mp}.
\end{align*}
Thus one can write
\begin{align}\label{atot:eq}
(U_\alpha^{\pm})^* A_{\alpha,\mu}^{\pm}U_\alpha^{\pm}
\sim T_{\alpha,\mu}^{\pm}.
\end{align}
This relation with the ``$+$" sign coincides with 
\eqref{atopret:eq}, and for the ``$-$" sign it is proved in the same way. 
The proof of Lemma \ref{btoone:lem} begins with the following observation.  
 
\begin{lemma}\label{auxiliarylemmaasymmetric}
For any $n = 1, 2, \dots,$ we have
\begin{align}\label{ptoq:eq}
P_\mu (T_{\alpha,\mu}^\pm)^n\approx  
(\mathds{1}-P_\mu)(T_{\alpha,\mu}^\mp)^n, \ \text{as } \alpha\to\infty.
\end{align}
\end{lemma}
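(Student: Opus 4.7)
The plan is to reduce both $\pm$ versions of the lemma to the single identity
\[
\tr[P_\mu (T_{\alpha,\mu}^+)^n] + \tr[P_\mu (T_{\alpha,\mu}^-)^n] \approx \tr[(T_{\alpha,\mu}^+)^n], \quad \alpha \to \infty.
\]
Writing $X = \chi_\alpha^- P_\mu \chi_\alpha^+$, one has $T_{\alpha,\mu}^+ = XX^*$ and $T_{\alpha,\mu}^- = X^*X$, so $\tr[(T_{\alpha,\mu}^+)^n] = \tr[(T_{\alpha,\mu}^-)^n]$. Combined with the trivial $\tr[(\mathds{1}-P_\mu)A] = \tr A - \tr[P_\mu A]$, the displayed identity then yields both $P_\mu (T_{\alpha,\mu}^\pm)^n \approx (\mathds{1}-P_\mu)(T_{\alpha,\mu}^\mp)^n$ at once.

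To prove this identity, rewrite $\tr[(T_{\alpha,\mu}^+)^n] = \tr[(P_\mu\chi_\alpha^+ P_\mu \chi_\alpha^-)^n P_\mu]$ using $P_\mu^2 = P_\mu$ and cyclicity, and decompose the trailing $P_\mu$ by inserting
\[
\mathds{1} = \chi_\alpha^- + \chi_\alpha^+ + \chi^{-,\mathrm{out}} + \chi^{+,\mathrm{out}},
\]
where $\chi^{-,\mathrm{out}}$ and $\chi^{+,\mathrm{out}}$ are the indicators of $(-\infty,-2\alpha)$ and $(2\alpha,\infty)$ respectively. The $\chi_\alpha^-$-term equals $\tr[(P_\mu\chi_\alpha^+ P_\mu\chi_\alpha^-)^n P_\mu \chi_\alpha^-] = \tr[P_\mu \chi_\alpha^-(P_\mu\chi_\alpha^+ P_\mu\chi_\alpha^-)^n] = \tr[P_\mu(T_{\alpha,\mu}^+)^n]$, the last step because $(T_{\alpha,\mu}^+)^n$ is already sandwiched by $\chi_\alpha^-$ and absorbs the extra factor. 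The $\chi_\alpha^+$-term becomes $\tr[(P_\mu\chi_\alpha^+ P_\mu\chi_\alpha^-)^n P_\mu\chi_\alpha^+]$, which by the elementary word identity
\[
(P_\mu\chi_\alpha^+ P_\mu\chi_\alpha^-)^n(P_\mu\chi_\alpha^+) = (P_\mu\chi_\alpha^+)(P_\mu\chi_\alpha^- P_\mu\chi_\alpha^+)^n
\]
(just comparing alternating products symbol by symbol) equals $\tr[P_\mu(T_{\alpha,\mu}^-)^n]$.

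It remains to bound the two ``outer'' contributions by $O(1)$. Cyclicity and H\"older's inequality give
\[
\bigl|\tr[(P_\mu\chi_\alpha^+ P_\mu \chi_\alpha^-)^n P_\mu \chi^{\pm,\mathrm{out}}]\bigr|
\leq \|\chi^{\pm,\mathrm{out}} P_\mu \chi_\alpha^+ P_\mu \chi_\alpha^-\|_1,
\]
the remaining cyclic factor $(P_\mu\chi_\alpha^+ P_\mu \chi_\alpha^-)^{n-1}P_\mu$ being a product of contractions. Each trace norm on the right is $O(1)$ by Proposition~\ref{proposition}: for the $+$-outer case the intervals $(2\alpha,\infty)$, $(0,2\alpha)$, $(-2\alpha,0)$ sit on opposite sides of a finite middle interval, applying part~\ref{IKoppositesidesofJ}, while for the $-$-outer case the intervals $(-\infty,-2\alpha)$, $(0,2\alpha)$, $(-2\alpha,0)$ satisfy $|J|=2\alpha=\mathrm{dist}(I,J)$, applying part~\ref{lengthofIlowerorder}(a). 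The main obstacle is not any genuinely new estimate but the careful bookkeeping needed to perform the cyclic rearrangements cleanly and to match each three-interval configuration with the correct sub-case of Proposition~\ref{proposition}.
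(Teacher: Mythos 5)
Your proof is correct, and it takes a genuinely different route from the paper's. The paper works at the operator level: it starts from $P(T^+)^n$, replaces the middle $P$ in $P\chi^-P\chi^+P$ by $\mathds{1}-Q$ (so that, since $\chi^-\chi^+ = 0$, this term becomes $-P\chi^- Q\chi^+ P$), then uses $PQ=0$ to write $-P\chi^- Q = P(\mathds{1}-\chi^-)Q$ and expands $\mathds{1}-\chi^-$ into four indicator pieces, directly transforming $P(T^+)^n$ into $Q(T^-)^n$ plus error terms; the ``$-$'' sign is said to follow ``in the same way.'' You instead reduce \emph{both} signs of the claim simultaneously to the single trace identity
\[
\tr[P(T^+)^n] + \tr[P(T^-)^n] \approx \tr[(T^+)^n],
\]
by exploiting the structural fact $T^+ = XX^*$, $T^- = X^*X$ (with $X=\chi_\alpha^- P_\mu\chi_\alpha^+$), whence $\tr[(T^+)^n]=\tr[(T^-)^n]$ exactly, and then partitioning the trailing $P$ of $(P\chi^+P\chi^-)^nP$ via $\mathds{1}=\chi^-+\chi^++\chi^{-,\mathrm{out}}+\chi^{+,\mathrm{out}}$. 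The two ``inner'' pieces are identified exactly as $\tr[P(T^\pm)^n]$ via the elementary word identity, and the two outer pieces are $O(1)$ by Proposition~\ref{proposition}(ii)(a) and (iii), just as in the paper. Your version is tighter: the reduction is purely trace-level and the $XX^*$/$X^*X$ observation dispenses with the need to track the ``$-$'' sign separately and with the paper's sign juggling around $P\chi^-P\chi^+P = -P\chi^-Q\chi^+P$. The key estimates invoked are the same in both arguments.
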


\begin{proof}
For brevity we write 
$\chi^{\pm}=\chi_\alpha^{\pm}$, 
$T^{\pm}=T_{\alpha,\mu}^{\pm}$, $P=P_\mu$, and 
$Q = \mathds{1}-P$. We have
\begin{align}\label{cycle:eq}
P(T^+)^n &=P\chi^-P\chi^+P\chi^-(T^+)^{n-1}
= -P\chi^-Q\chi^+P\chi^-(T^+)^{n-1}\nonumber\\
&= P(\mathds{1}-\chi^-)Q\chi^+P\chi^-(T^+)^{n-1}\nonumber\\
&= P\chi^+Q\chi^+P\chi^-(T^+)^{n-1} + R_1 + R_2,
\end{align}
with
\begin{align}
R_1&= P\chi_{(2\alpha,\infty)}Q\chi^+P\chi^-(T^+)^{n-1},\nonumber\\
R_2&= P\chi_{(-\infty,-2\alpha)}Q\chi^+P\chi^-(T^+)^{n-1}\nonumber.
\end{align}
We notice that $Q = \mathds{1}-P$ can 
be replaced by $-P$ in $R_1$. 
By Proposition \ref{proposition}\ref{IKoppositesidesofJ}, 
\begin{align*}
\chi_{(2\alpha, \infty)}P\chi^+P\chi^-\sim 0,
\end{align*}
 so that $R_1\sim 0$. To handle $R_2$, observe that 
 \begin{align}\label{plustominus:eq}
 \chi^+P\chi^-(T^+)^{n-1} = (T^-)^{n-1}\chi^+P\chi^-,
 \end{align}
and hence, by cyclicity of the trace, 
\begin{align*}
R_2\approx Q\chi^+ (T^-)^{n-1} \chi^+P\chi^- P\chi_{(-\infty,-2\alpha)}.
\end{align*} 
 Applying Proposition \ref{proposition}\ref{IKoppositesidesofJ} 
 to the factor  $\chi^+P\chi^- P\chi_{(-\infty,-2\alpha)}$ 
 we infer that $R_2\approx 0$.

Apply \eqref{plustominus:eq} to the first operator on the 
right-hand side of \eqref{cycle:eq} and use again the cyclicity:
\begin{align*}
P\chi^+Q\chi^+P\chi^-(T^+)^{n-1}
= &\ P\chi^+Q (T^-)^{n-1}\chi^+P\chi^-\\
\approx &\ Q (T^-)^{n-1} \chi^+P\chi^- P\chi^+ 
= Q (T^-)^n.
\end{align*}
Together with \eqref{cycle:eq} this yields \eqref{ptoq:eq} for the ``$+$" 
sign. 
The relation \eqref{ptoq:eq} for the ``$-$" sign is obtained in the same way. 
\end{proof}

\begin{proof}[Proof of Lemma \ref{btoone:lem}] 
We shall use the simplified notation as 
in the proof of Lemma \ref{auxiliarylemmaasymmetric} and 
also write 
$A=A_{\alpha,\mu}$, 
$A^\pm=A_{\alpha,\mu}^\pm$, and 
$B=B_{\alpha,\mu}$. 
First observe that $BA^n \approx PA^n$. Thus by 
\eqref{aaplus2:eq} and \eqref{atot:eq}, 
\begin{align*}
BA^n\approx P (A^+)^n + P (A^-)^n 
\approx P (T^+)^n + P (T^-)^n.
\end{align*}
By Lemma \ref{auxiliarylemmaasymmetric}, 
\begin{align*}
2 P (T^{\pm})^n \approx P (T^{\pm})^n + (\mathds{1}-P) (T^{\mp})^n,
\end{align*}
so that 
\begin{align*}
2P (T^+)^n + 2P (T^-)^n \approx &\ 
P (T^{+})^n + (\mathds{1}-P)(T^{-})^n +  
P (T^{-})^n + (\mathds{1}-P) (T^{+})^n\\[0.2cm]
= &\ (T^{+})^n + (T^{-})^n.
\end{align*}
Using \eqref{atot:eq} and \eqref{aaplus2:eq} again, we get 
\begin{align*}
2BA^n\approx A^n, 
\end{align*}
which leads to \eqref{btoone:eq}, and hence 
completes the proof.  
\end{proof}

As a consequence of Lemma \ref{btoone:lem},  
Theorem \ref{theorem} can be proved for arbitrary $p\in\mathfrak{P}_0$.

\begin{proof}[Proof of Theorem \ref{theorem} for arbitrary polynomials] 
It remains to prove the theorem for polynomials of the form 
$q_n(t) = t p_n(t)$, $n = 1, 2, \dots$. 
From Lemma \ref{btoone:lem} and 
\eqref{powernhankeleq} we deduce that  
\begin{align}\label{qn:eq}
\tr\big[B_{\alpha,\mu}\big(A_{\alpha,\mu}\big)^n\big] =
\frac{1}{2}\log(\alpha) \mathcal W(p_n) + o(\log(\alpha)),\ \alpha\to\infty.
\end{align}
To convert $\mathcal W(p_n)$ into $\mathcal W(q_n)$ we perform a very 
elementary calculation:
\begin{align*}
\pi^2\mathcal W(q_n) = 
\int_0^1 \frac{tp_n(t)}{t(1-t)}\, dt
=  \int_0^1  \frac{p_n(t)}{1-t}\, dt
= \int_0^1  \frac{p_n(t)}{t}\, dt.
\end{align*}
Therefore
\begin{align*}
2\pi^2 \mathcal W(q_n)
= \int_0^1 p_n(t)\biggl(\frac{1}{1-t}+ \frac{1}{t} \biggr)\, dt
= \int_0^1  \frac{p_n(t)}{t(1-t)}\, dt
= \pi^2\mathcal W(p_n).
\end{align*}
Together with \eqref{qn:eq} this leads to Theorem \ref{theorem} 
for arbitrary polynomials $p\in \mathfrak P_0$.
\end{proof}

\subsection{Closure of the Asymptotics}
 \label{closure:subsect}
Throughout this final section we 
assume that $h$ satisfies Condition \ref{h:cond}. 
The proof splits into three steps.

\underline{Step 1.} 
First we prove the theorem for continuous functions $h$ 
such that $h(0) = h(1) = 0$ that are 
differentiable at $t =0$ and $t = 1$. 
Without loss of generality 
we may assume that $h$ is real-valued 
(otherwise treat real and imaginary part separately). 
The differentiability condition at $t=0$ and $t=1$ 
implies that $h(t)=t(1-t)g(t)$ 
for a continuous real-valued function $g$. 
Fix $\epsilon>0$. Due to the Stone-Weierstrass theorem, 
there exist a real-valued polynomial 
$p\in\mathfrak{P}$ with $\|p-g\|_\infty<\epsilon$. 
Denoting $\tilde{p}(t):=t(1-t)p(t)$ we estimate
\begin{align}\label{hleqpolynomial}
h(t)\leq t(1-t)(p(t)+\epsilon)=\tilde{p}(t)+\epsilon t(1-t), 
\end{align}
and 
\begin{align}\label{hgeqpolynomial}
h(t)\geq t(1-t)(p(t)-\epsilon)=\tilde{p}(t)-\epsilon t(1-t).
\end{align}
The monotonicity of the trace in combination 
with \eqref{hleqpolynomial} gives
\begin{align*}
\tr\big[ h(B_{\alpha,\mu})\big] \leq \tr\big[\tilde{p}
(B_{\alpha,\mu})\big]+\epsilon\tr\big[B_{\alpha,\mu}(\mathds{1}-B_{\alpha,\mu})\big].
\end{align*}
From Theorem \ref{theorem} for polynomials from $\mathfrak P_0$, we get 
\begin{align*}
\limsup\limits_{\alpha\to\infty}
\frac{\tr\big[ h(B_{\alpha,\mu})\big]}{\log(\alpha)}
\leq \mathcal{W}(\tilde{p})+\epsilon \mathcal{W}(t(1-t))
= \mathcal W(\tilde p) + \frac{\epsilon}{\pi^2},
\end{align*}
where we have used 
that $\mathcal W(t(1-t)) = \pi^{-2}$, see \eqref{whc}. 
Moreover, we notice that 
\begin{align*}
\big|\mathcal{W}(h)-\mathcal{W}(\tilde{p})\big| = 
\big|\mathcal{W}(h-\tilde{p})\big|\leq \frac{\epsilon}{\pi^2},
\end{align*}
and hence,
\begin{align*}
\limsup\limits_{\alpha\to\infty}
\frac{\tr\big[ h(B_{\alpha,\mu})\big]}{\log(\alpha)}
\leq \mathcal{W}(h) +  \frac{2\epsilon}{\pi^2}.
\end{align*}
In the same way \eqref{hgeqpolynomial} implies
\begin{align*}
\liminf\limits_{\alpha\to\infty}\frac{\tr\big[ h(B_{\alpha,\mu})\big]}{\log(\alpha)}\geq \mathcal{W}(h) - \frac{2\epsilon}{\pi^2},
\end{align*}
and as $\epsilon>0$ was chosen arbitrarily we 
deduce \eqref{trhb} for our choice of $h$.

\underline{Step 2.}
Now let $h$ be a continuous function, 
which is H\"older-continuous at $0$ and $1$ 
with exponent $q\in(0,1]$, so that 
\begin{align*}
|h(t)|\ll  t^q(1-t)^q,\ t\in[0,1].
\end{align*}
Fix again $\epsilon >0$ and choose a 
smooth function $\zeta_\epsilon$ such that $0\leq \zeta_\epsilon\leq 1$ and 
\begin{align*}
\zeta_\epsilon(t)
=\begin{cases}
1, &t\in [0,\epsilon/2]\cup [1-\epsilon/2,1],\nonumber\\
0, &t\in [\epsilon,1-\epsilon].
\end{cases} 
\end{align*}
In view of the estimate
\begin{align*}
|(\zeta_\epsilon h)(t)|\ll [t(1-t)]^q\zeta_\epsilon(t)
\ll \epsilon^r [t(1-t)]^r,\ r = \frac{q}{2},
\end{align*}
we have 
\begin{align*}
\|(\zeta_\epsilon h)(B_{\alpha,\mu})\|_1
\ll \epsilon^r
\|B_{\alpha,\mu}(\mathds{1}-B_{\alpha,\mu})\|_r^r.
\end{align*}
By Corollary \ref{corquasinorm}, the right-hand side does not exceed 
$\log(\alpha)$, $\alpha \ge 2$.  
Consequently, 
\begin{align}\label{esttrzetaepsh}
\frac{\big|\tr\big[(\zeta_\epsilon h)
(B_{\alpha,\mu})\big]\big|}{\log(\alpha)}\ll \epsilon^r,\ \alpha\geq 2.
\end{align}
On the other hand, $h_\epsilon = (1-\zeta_\epsilon)h$ 
vanishes in a vicinity of $0$ and $1$ and, therefore, by Step 1, 
we have
\begin{align}\label{esttrheps}
\tr\big[h_\epsilon(B_{\alpha,\mu})\big] 
= \log(\alpha)\mathcal{W}(h_\epsilon)+o(\log(\alpha)), \ \alpha\to\infty.
\end{align}
It is clear that
\begin{align}\label{estwheps}
\mathcal{W}(h)-\mathcal{W}(h_\epsilon)
\ll \biggl(\int_0^\epsilon + \int_{1-\epsilon}^1\biggr) 
t^{q-1}(1-t)^{q-1}\, dt
\ll \epsilon^q.
\end{align} 
Combining 
\eqref{esttrzetaepsh}, 
\eqref{esttrheps}, and \eqref{estwheps} gives
\begin{align*}
\limsup\limits_{\alpha\to\infty}\Big|\frac{\tr(h(B_{\alpha,\mu}))}{\log\alpha}-\mathcal{W}(h)\Big|\ll \epsilon^r.
\end{align*}
Since $\epsilon>0$ is arbitrary, this yields the claim. 

\underline{Step 3.}
Suppose that $h$ satisfies Condition \ref{h:cond}. 
Let $t_0\in (0, 1)$ be a point such that $h$ is continuous 
on $[0, t_0]$ and $[1-t_0, 1]$. 
Fix an $\epsilon >0$. 
Then one can find 
two continuous functions $h_1, h_2$ as at Step 2, such that 
\begin{align*}
h_1(t) = h_2(t) = h(t),\ t\in [0, t_0]\cup[1-t_0, 1],\ \quad 
h_1(t)\le h(t)\le h_2(t),\ t\in [0, 1],
\end{align*}
and $\|h_2-h_1\|_{\rm L^1}<\epsilon$. 
This implies that 
\begin{align*}
|\mathcal W(h_1) - \mathcal W(h)|, |\mathcal W(h_2)-\mathcal W(h)|
\ll \epsilon.
\end{align*}
Now, in view of monotonicity, we have
\begin{align*}
\tr h_1(B_{\alpha, \mu})\le \tr h(B_{\alpha, \mu})
\le \tr h_2(B_{\alpha, \mu}).
\end{align*}
Thus, by Step 2, 
\begin{align*}
\limsup\limits_{\alpha\to\infty}
\left|\frac{\tr h(B_{\alpha, \mu})}{\log(\alpha)} - \mathcal W(h)\right|
 \ll \epsilon.
\end{align*}
Since $\epsilon>0$ is arbitrary, 
the required result follows.  


\bibliographystyle{beststyle}
\bibliography{bibliography}
\Addresses
\end{document}